%
%
%
%
%
\RequirePackage{fix-cm}
\documentclass[smallextended,referee,envcountsect]{svjour3}       
\smartqed  

\usepackage{graphicx}
\usepackage{amsmath, ntheorem,dsfont}

\usepackage{pifont}

\usepackage{amssymb}
\usepackage{stackrel}
\usepackage{marvosym, enumerate}
\usepackage{amstext, amscd, latexsym}
\usepackage{amsfonts, ragged2e}
\usepackage{mathrsfs, pgfplots, enumerate, setspace}
\usepackage{subfigure}
\usepackage{cases}

\smartqed
\usepackage{cite}
\usepackage{amstext, graphicx, amscd, latexsym, amssymb}
\usepackage{amsfonts, ragged2e}
\usepackage{pgfplots, setspace}
\usepackage{geometry}
\usepackage{latexsym,bm}
\usepackage{float}
\usepackage{epstopdf,caption}
 
\usepackage{amsfonts,amsmath,amsbsy,amssymb,amscd,mathrsfs}
\usepackage{graphicx}
\usepackage{epstopdf}
\usepackage{algorithmic} 
\usepackage{graphicx,epstopdf}

 \newcommand{\nm}[1]{\left\lVert {#1} \right\rVert}
 
 \newcommand{\dual}[1]{\left\langle {#1} \right\rangle}

\itemsep=0cm \arraycolsep=0pt

\usepackage[figuresright]{rotating}
\usepackage[sort&compress,numbers]{natbib}
\usepackage[ruled,linesnumbered]{algorithm2e} 
\usepackage[colorlinks,linkcolor=blue,anchorcolor=blue,citecolor=blue,urlcolor=blue]{hyperref}

\setlength{\parindent}{2em}

\newtheorem{assumption}{Assumption}[section]

\journalname{}

\begin{document}

\title{Barzilai-Borwein Descent Methods for Multiobjective Optimization Problems with Variable Trade-off Metrics}


\author{Jian Chen \and Liping Tang \and  Xinmin Yang  }

\institute{J. Chen \at National  Center  for  Applied  Mathematics in Chongqing, Chongqing Normal University, Chongqing 401331, China, and School of Mathematical Sciences, University of Electronic Science and Technology of China, Chengdu, Sichuan 611731, China\\
                    \href{mailto:chenjian_math@163.com}{chenjian\_math@163.com}\\
                   L.P. Tang \at National Center for Applied Mathematics in Chongqing, and School of Mathematical Sciences,  Chongqing Normal University, Chongqing 401331, China\\
                   \href{mailto:tanglipings@163.com}{tanglipings@163.com}\\
        \Letter X.M. Yang \at National Center for Applied Mathematics in Chongqing, and School of Mathematical Sciences,  Chongqing Normal University, Chongqing 401331, China\\
        \href{mailto:xmyang@cqnu.edu.cn}{xmyang@cqnu.edu.cn} \\}

\date{Received: date / Accepted: date}

\maketitle

\begin{abstract}
The imbalances and conditioning of the objective functions influence the performance of first-order methods for multiobjective optimization problems (MOPs). The latter is related to the metric selected in the direction-finding subproblems. Unlike single-objective optimization problems, capturing the curvature of all objective functions with a single Hessian matrix is impossible. On the other hand, second-order methods for MOPs use different metrics for objectives in direction-finding subproblems, leading to a high per-iteration cost. To balance per-iteration cost and better curvature exploration, we propose a Barzilai-Borwein descent method with variable metrics (BBDMO\_VM). In the direction-finding subproblems, we employ a variable metric to explore the curvature of all objectives. Subsequently, Barzilai-Borwein's method relative to the variable metric is applied to tune objectives, which mitigates the effect of imbalances. We investigate the convergence behaviour of the BBDMO\_VM, confirming fast linear convergence for well-conditioned problems relative to the variable metric. In particular, we establish linear convergence for problems that involve some linear objectives. These convergence results emphasize the importance of metric selection, motivating us to approximate the trade-off of Hessian matrices to better capture the geometry of the problem. Comparative numerical results confirm the efficiency of the proposed method, even when applied to large-scale and ill-conditioned problems.

\keywords{Multiobjective optimization \and Barzilai-Borwein's rule \and Variable metric method \and Linear convergence}
\subclass{90C29 \and 90C30}
\end{abstract}

\section{Introduction}
An unconstrained multiobjective optimization problem can be stated as follows:
\begin{align*}
	\min\limits_{x\in\mathbb{R}^{n}} F(x), \tag{MOP}\label{MOP}
\end{align*}
where $F:\mathbb{R}^{n}\rightarrow\mathbb{R}^{m}$ is a continuously differentiable function. 
In multiobjective optimization, the primary goal is to simultaneously optimize multiple objective functions. In general, finding a single solution that optimizes all objectives is infeasible. Therefore, optimality is defined by {\it Pareto optimality} or {\it efficiency}. A solution is considered Pareto optimal or efficient if no objective can be improved without sacrificing the others. As society and the economy advance, the applications of this type of problem have expanded into various domains, including engineering \cite{MA2004}, economics \cite{FW2014}, management science \cite{E1984}, and machine learning \cite{SK2018}, among others.
\par Solution strategies play a pivotal role in the realm of applications involving multiobjective optimization problems (MOPs). Over the past two decades, multiobjective gradient descent methods have garnered increasing attention within the multiobjective optimization community. These methods generate descent directions by solving subproblems, eliminating the necessity for predefined parameters. Subsequently, line search techniques are employed along the descent direction to ensure sufficient improvement for all objectives. Attouch et al. \cite{AGG2015} highlighted an appealing characteristic of this method in fields such as game theory, economics, social science, and management:{\it it improves each of the objective functions}. As far as we know, the study of multiobjective gradient descent methods can be traced back to the pioneering works by Mukai \cite{M1980}. and Fliege and Svaiter \cite{FS2000}. The later clarified that the multiobjective steepest descent direction reduces to the steepest descent direction when dealing with a single objective. This observation inspired researchers to extend ordinary numerical algorithms for solving MOPs (see, e.g., \cite{AP2021,BI2005,CL2016,FD2009,FV2016,GI2004,LP2018,MP2019,P2014,QG2011} and references therein). 
\subsection{First-order methods}
\par Fliege and Svaiter \cite{FS2000} introduced the steepest descent method for MOPs (SDMO). The steepest descent direction is the optimal solution of the following subproblem:
\begin{align*}
	&\min\limits_{d\in\mathbb{R}^{n}} \max\limits_{i=1,2,...,m}\ \dual{\nabla F_{i}(x),d}+\frac{1}{2}\|d\|^{2}.
\end{align*}
This subproblem can be reformulated as a quadratic problem and efficiently solved through its dual \cite{SK2018}. Subsequently, Gra$\rm\tilde{n}$a Drummond and Iusem extended this method to constrained MOPs, proposing the projected gradient method for MOPs. For multiobjective composite optimization problems, Tanabe et al. \cite{TFY2019} extended the proximal gradient method to MOPs. Like most first-order methods for single-objective optimization problems (SOPs), these MOP counterparts enjoy cheap per-step computation cost but suffer slow convergence, especially for ill-conditioned problems. In response to this challenge, some classic methods were extended to MOPs, including Barzilai-Borwein's method \cite{MP2016}, nonlinear conjugate gradient method \cite{LP2018}, and Nesterov's accelerated method \cite{TFY2022,SP2022,SP2023}. In addition to issues stemming from ill-conditioning, another inherent challenge arises from imbalances among objective functions. Chen et al. \cite{CTY2023} highlighted that even when all objective functions are not ill-conditioned, imbalances among them can lead to slow convergence of first-order methods for MOPs. To address this issue, Chen et al. \cite{CTY2023} applied Barzilai-Borwein's method to alleviate the impact of imbalances. They demonstrated that the Barzilai-Borwein proximal gradient method \cite{CTY2023b} converges at a rate of $\sqrt{1-\min\limits_{i=1,2,...,m}\left\{\frac{\mu_{i}}{L_{i}}\right\}}$, where $\mu_{i}$ and $L_{i}$ are the constants of strong convexity and smoothness of $f_{i}$, respectively. It is worth noting that the performance of this type of method also depends on the conditioning of problems.  
\subsection{Second-order methods}
 Fliege et al. \cite{FD2009} proposed Newton's method for MOPs (NMO). The Newton direction is the optimal solution of 
\begin{align*}
	&\min\limits_{d\in\mathbb{R}^{n}} \max\limits_{i=1,2,...,m}\ \dual{\nabla F_{i}(x),d}+\frac{1}{2}\dual{d,\nabla^{2}F_{i}(x)d}.
\end{align*}
It has been proven that NMO possesses desirable properties \cite{FD2009}, including local superlinear and quadratic convergence under standard assumptions. Furthermore, quasi-Newton methods have garnered considerable attention \cite{QG2011,P2014,LM2023,PS2022,PS2023} and demonstrate local superlinear convergence. While these methods for MOPs are superior in capturing the local geometry of objective functions and offering rapid convergence, the per-step cost is computationally expensive. In contrast to their Single-Objective Problem (SOP) counterparts, the high per-step computation cost arises not only from the computation of Hessian matrices and their inverses but also from the costly subproblems.\footnote{The subproblems of second-order methods for MOPs can only be solved by reformulating into quadratic constrained problems, see \cite{CLY2023}, or the prime minimax problems. Solving these problems is much more time-consuming than quadratic dual problems of first-order methods for MOPs.}.
\par In summary, the slow convergence observed in first-order methods for MOPs can be primarily attributed to the ill-conditioning and imbalances among objective functions. Meanwhile, second-order methods for MOPs face the dual challenges of increased computation costs, particularly concerning (inverse) Hessian matrices and expensive subproblems. This leads us to a natural and compelling question: Can we devise an algorithm that maintains an affordable per-step computation cost, achieves rapid convergence, and is not sensitive to conditioning?
\par Ansary and Panda \cite{AP2015} utilized a single quasi-Newton approximation to approximate all Hessian matrices. Subsequently, this idea was adopted by Chen et al. \cite{CLY2023} and Lapucci and Mansueto \cite{LM2023}. While this approximation better captures the problem's geometry and allows for efficient subproblem solving, Chen et al. \cite{CLY2023} identified a limitation: the monotone line search cannot accept a unit step size, thereby hindering superlinear convergence. As a result, a significant question remains: How to overcome this limitation and accelerate this method?
\par This paper elucidates that imbalances among objective functions lead to a small stepsize in the variable metric method for MOPs (VMMO), which decelerates the convergence. To address this issue, Barzilai-Borwein's rule relative to the variable metric is applied to tune the gradients in the direction-finding subproblem. The main contributions of this paper can be summarized in the following points:
\par $\bullet$ We introduce the Barzilai-Borwein descent method for MOPs with variable metrics (BBDMO\_VM). This approach utilizes a variable metric for all objectives, ensuring a simplified subproblem and improved curvature exploration. We apply Barzilai-Borwein's rule relative to the metric to fine-tune the gradients in the direction-finding subproblem, effectively mitigating the effects of imbalances among objective functions.
\par $\bullet$ We investigate the convergence properties of BBDMO\_VM.  Notably, every accumulation point of the sequence generated by BBDMO\_VM is a Pareto critical point. Under the assumptions of relative smoothness and strong convexity, we establish the fast linear convergence of BBDMO\_VM. Furthermore, we demonstrate its linear convergence for problems involving some linear objectives. The fast linear convergence confirms BBDMO\_VM's potential to alleviate imbalances among objective functions.
\par $\bullet$ We stress the vital importance of metric selection in BBDMO\_VM. To capture the problem's geometry more effectively, we employ a matrix to approximate the trade-off Hessian for the multiobjective Newton-type method. From a computational standpoint, we determine the trade-off parameters based on previous information. We have also developed a variant incorporating a trade-off with quasi-Newton approximation to reduce computational costs.
\par The paper is organized as follows. In section \ref{sec2}, we introduce necessary notations and definitions that will be used later. Section \ref{sec3} revisits several multiobjective gradient descent methods. Section \ref{sec4}, we propose the BBDMO\_VM and present some preliminary lemmas. The convergence analysis of BBDMO\_VM is detailed in section \ref{sec5}. In section \ref{sec6}, we investigate the metric selection of BBDMO\_VM. The numerical results are presented in section \ref{sec7}, demonstrating the efficiency BBDMO\_VM. Finally, we draw some conclusions at the end of the paper.

\section{Preliminaries}\label{sec2}
Throughout this paper, the $n$-dimensional Euclidean space $\mathbb{R}^{n}$ is equipped with the inner product $\langle\cdot,\cdot\rangle$ and the induced norm $\|\cdot\|$. Denote $\mathbb{S}^{n}_{++}(\mathbb{S}^{n}_{+})$ the set of symmetric (semi-)positive definite matrices in $\mathbb{R}^{n\times n}$.  We denote by $JF(x)\in\mathbb{R}^{m\times n}$ the Jacobian matrix of $F$ at $x$, by $\nabla F_{i}(x)\in\mathbb{R}^{n}$ the gradient of $F_{i}$ at $x$ and by $\nabla^{2}F_{i}(x)\in\mathbb{R}^{n\times n}$ the Hessian matrix of $F_{i}$ at $x$. For a positive definite matrix $H$, the notation $\|x\|_{H}=\sqrt{\langle x,Hx \rangle}$ is used to represent the norm induced by $H$ on vector $x$. For simplicity, we denote $[m]:=\{1,2,...,m\}$, and $$\Delta_{m}:=\left\{\lambda:\sum\limits_{i\in[m]}\lambda_{i}=1,\lambda_{i}\geq0,\ i\in[m]\right\}$$ the $m$-dimensional unit simplex. To prevent any ambiguity, we establish the order $\preceq(\prec)$ in $\mathbb{R}^{m}$ as follows: $$u\preceq(\prec)v~\Leftrightarrow~v-u\in\mathbb{R}^{m}_{+}(\mathbb{R}^{m}_{++}),$$
and in $\mathbb{S}^{n}$ as:
$$U\preceq(\prec)V~\Leftrightarrow~V-U\in\mathbb{S}^{n}_{+}(\mathbb{S}^{n}_{++}).$$
\par In the following, we introduce the concepts of optimality for (\ref{MOP}) in the Pareto sense. 
\vspace{2mm}
\begin{definition}\label{def1}
	A vector $x^{\ast}\in\mathbb{R}^{n}$ is called Pareto solution to (\ref{MOP}), if there exists no $x\in\mathbb{R}^{n}$ such that $F(x)\preceq F(x^{\ast})$ and $F(x)\neq F(x^{\ast})$.
\end{definition}
\vspace{2mm}
\begin{definition}\label{def2}
	A vector $x^{\ast}\in\mathbb{R}^{n}$ is called weakly Pareto solution to (\ref{MOP}), if there exists no $x\in\mathbb{R}^{n}$ such that $F(x)\prec F(x^{\ast})$.
\end{definition}
\vspace{2mm}
\begin{definition}\label{def3}
	A vector $x^{\ast}\in\mathbb{R}^{n}$ is called  Pareto critical point of (\ref{MOP}), if
	$$\mathrm{range}(JF(x^{*}))\cap-\mathbb{R}_{++}^{m}=\emptyset,$$
	where $\mathrm{range}(JF(x^{*}))$ denotes the range of linear mapping given by the matrix $JF(x^{*})$.

\end{definition}

\par From Definitions \ref{def1} and \ref{def2}, it is evident that Pareto solutions are always weakly Pareto solutions. The following lemma shows the relationships among the three concepts of Pareto optimality.
\vspace{2mm}
\begin{lemma}[Theorem 3.1 of \cite{FD2009}] The following statements hold.
	\begin{itemize}
		\item[$\mathrm{(i)}$]  If $x\in\mathbb{R}^{n}$ is a weakly Pareto solution to (\ref{MOP}), then $x$ is Pareto critical point.
		\item[$\mathrm{(ii)}$] Let every component $F_{i}$ of $F$ be convex. If $x\in\mathbb{R}^{n}$ is a Pareto critical point of (\ref{MOP}), then $x$ is weakly Pareto solution.
		\item[$\mathrm{(iii)}$] Let every component $F_{i}$ of $F$ be strictly convex. If $x\in\mathbb{R}^{n}$ is a Pareto critical point of (\ref{MOP}), then $x$ is Pareto solution.
	\end{itemize}
\end{lemma}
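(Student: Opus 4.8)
The plan is to prove all three parts by contraposition, relying only on the definition of a Pareto critical point together with first-order expansions; convexity enters only in parts (ii) and (iii).

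For (i), I would assume $x$ is \emph{not} Pareto critical and manufacture a common descent direction. By Definition~\ref{def3} there is $d\in\mathbb{R}^{n}$ with $JF(x)d\in-\mathbb{R}^{m}_{++}$, i.e. $\langle\nabla F_{i}(x),d\rangle<0$ for every $i\in[m]$. Continuous differentiability of $F$ gives $\lim_{t\downarrow0}\bigl(F_{i}(x+td)-F_{i}(x)\bigr)/t=\langle\nabla F_{i}(x),d\rangle<0$, so for each $i$ there is $t_{i}>0$ with $F_{i}(x+td)<F_{i}(x)$ on $(0,t_{i})$; setting $\bar t=\min_{i\in[m]}t_{i}>0$ yields $F(x+td)\prec F(x)$ for all $t\in(0,\bar t)$, contradicting weak Pareto optimality of $x$.

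For (ii), I would assume $x$ is Pareto critical but not weakly Pareto optimal, so that some $y$ satisfies $F(y)\prec F(x)$. Putting $d=y-x$, the gradient inequality for convex $F_{i}$ gives $\langle\nabla F_{i}(x),d\rangle\le F_{i}(y)-F_{i}(x)<0$ for all $i\in[m]$, hence $JF(x)d\in-\mathbb{R}^{m}_{++}$, contradicting Definition~\ref{def3}. For (iii), rather than redo a gradient argument I would reduce to (ii): if $x$ is Pareto critical but not Pareto optimal, pick $y$ with $F(y)\preceq F(x)$ and $F(y)\ne F(x)$ (so $y\ne x$); for any $t\in(0,1)$ strict convexity yields $F_{i}(x+t(y-x))<(1-t)F_{i}(x)+tF_{i}(y)\le F_{i}(x)$ for every $i$, so $F(x+t(y-x))\prec F(x)$ and $x$ is not weakly Pareto optimal — impossible by (ii), since strict convexity implies convexity.

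The only delicate point I foresee is part (iii): linearizing directly along $y-x$ only produces $\langle\nabla F_{i}(x),y-x\rangle\le 0$, possibly with equalities, which is too weak to violate Pareto criticality on its own. The remedy is to pass to an interior point of the segment $[x,y]$, where strict convexity upgrades these to strict inequalities, and then invoke part (ii); the remaining steps are routine.
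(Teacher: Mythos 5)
Your argument is correct in all three parts, and it is essentially the standard proof of this result; the paper itself supplies no proof, merely citing Theorem 3.1 of \cite{FD2009}, where the same reasoning (a common descent direction from non-criticality for (i), the gradient inequality for (ii), and strict convexity along the segment $[x,y]$ combined with (ii) for (iii)) is used. Your observation that the naive linearization in (iii) only gives non-strict inequalities, so one must pass through weak Pareto optimality via an interior point of the segment, is exactly the right fix.
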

\vspace{2mm}
\begin{definition}
	A differentiable function $h:\mathbb{R}^{n}\rightarrow\mathbb{R}$ is $L$-smooth if $$\nm{\nabla h(y)-\nabla h(x)}\leq\nm{y-x}$$ holds for
	all $x,y\in\mathbb{R}^{n}$. And $h$ is $\mu$-strongly convex if $$\dual{\nabla h(y)-\nabla h(x),y-x}\geq\mu\nm{y-x}^{2}$$ holds for all $x,y\in\mathbb{R}^{n}$.
\end{definition}
\vspace{2mm}
\par $L$-smoothness of $h$ implies the following quadratic upper bound:
$$h(y)\leq h(x) + \dual{\nabla h(x),y-x}+\frac{L}{2}\|y-x\|^{2},~\forall x,y\in\mathbb{R}^{n}.$$ 
On the other hand, $\mu$-strong convexity yields the quadratic lower bound:
$$h(y)\geq h(x) + \dual{\nabla h(x),y-x}+\frac{\mu}{2}\|y-x\|^{2},~\forall x,y\in\mathbb{R}^{n}.$$
When the Euclidean distance is replaced by $\|\cdot\|_{B}$, where $B$ is a positive definite matrix, then $h$ is $L$-smooth and $\mu$-strongly convex relative to $\|\cdot\|_{B}$.
\section{Gradient descent methods for MOPs}\label{sec3}
In this section, we revisit some gradient descent methods for MOPs.
\subsection{Steepest descent method}
\par For $x\in\mathbb{R}^{n}$, the steepest descent direction \cite{FS2000} is defined as the optimal solution of the following subproblem:
\begin{align}\label{eq3.1}
	\min\limits_{d\in\mathbb{R}^{n}} \max\limits_{i\in[m]}\ \langle \nabla F_{i}(x),d\rangle+\frac{1}{2}\|d\|^{2}.
\end{align}
Since $d\mapsto\langle \nabla F_{i}(x),d\rangle+\frac{1}{2}\|d\|^{2}$ is strongly convex for $i\in[m]$, then (\ref{eq3.1}) has a unique minimizer. We denote by $d_{SD}(x)$ and $\theta_{SD}(x)$ the optimal solution and optimal value of (\ref{eq3.1}), respectively. Hence,
\begin{equation}\label{E2.1}
	\theta_{SD}(x) = \min\limits_{d\in \mathbb{R}^{n}}\max\limits_{i\in[m]}\langle \nabla F_{i}(x),d\rangle+\frac{1}{2}\|d\|^{2},
\end{equation}
and
\begin{equation}\label{E2.2}
	d_{SD}(x) = \mathop{\arg\min}\limits_{d\in \mathbb{R}^{n}}\max\limits_{i\in[m]}\langle \nabla F_{i}(x),d\rangle+\frac{1}{2}\|d\|^{2}.
\end{equation}
Indeed, problem (\ref{eq3.1}) can be equivalently rewritten as the following smooth quadratic problem:
\begin{align*}\tag{QP}\label{QP}
	&\min\limits_{(t,d)\in\mathbb{R}\times\mathbb{R}^{n}}\ t+\frac{1}{2}\|d\|^{2},\\
	&\ \ \ \ \ \ \mathrm{ s.t.} \ \ \ \ \langle \nabla F_{i}(x),d\rangle \leq t,\ i\in[m].
\end{align*}
Notice that (\ref{QP}) is a convex problem with linear constraints, thus strong duality holds. The Lagrangian of (\ref{QP}) is
$$L((t,d),\lambda)=t+\frac{1}{2}\|d\|^{2} +\sum\limits_{i\in[m]}\lambda_{i}(\langle \nabla F_{i}(x),d\rangle-t).$$
By Karush-Kuhn-Tucker (KKT) conditions, we have
\begin{equation}\label{E3.3}
	\sum\limits_{i\in[m]}\lambda_{i}=1,
\end{equation}

\begin{equation}\label{E3.4}
	d + \sum\limits_{i\in[m]}\lambda_{i}\nabla F_{i}(x)=0,
\end{equation}

\begin{equation}\label{E3.5}
	\langle \nabla F_{i}(x),d\rangle \leq t,\ i\in[m],
\end{equation}

\begin{equation}\label{E3.6}
	\lambda_{i}\geq0,\ i\in[m],
\end{equation}

\begin{equation}\label{E3.7}
	\lambda_{i}(\langle \nabla F_{i}(x),d\rangle-t)=0,\ i\in[m].
\end{equation}
From (\ref{E3.4}), we obtain
\begin{equation}\label{E3.8}
	d_{SD}(x) = -\sum\limits_{i\in[m]}\lambda_{i}^{SD}(x)\nabla F_{i}(x),
\end{equation}
where $\lambda^{SD}(x)\in\Delta_{m}$ is the solution to the dual problem:
\begin{align*}\tag{DP}\label{DP}
	-\min\limits_{\lambda}&\frac{1}{2} \left\|\sum\limits_{i\in[m]}\lambda_{i}\nabla F_{i}(x)\right\|^{2}\\
	\mathrm{ s.t.}& \ \lambda\in\Delta_{m}.	
\end{align*}
Recall that strong duality holds, we obtain
\begin{equation}\label{E3.9}
	\theta(x)=-\frac{1}{2} \left\|\sum\limits_{i\in[m]}\lambda^{SD}_{i}(x)\nabla F_{i}(x)\right\|^{2}=-\frac{1}{2}\|d_{SD}(x)\|^{2}.
\end{equation}
From (\ref{E3.5}), we have
\begin{equation}\label{E3.10}
	\langle \nabla F_{i}(x),d_{SD}(x)\rangle \leq t(x)=\theta_{SD}(x)-\frac{1}{2}\|d_{SD}(x)\|^{2}=-\|d_{SD}(x)\|^{2},\ i\in[m].
\end{equation}
Denote $$\mathcal{A}_{SD}(x):=\{i:\lambda^{SD}_{i}(x)>0,~i\in[m]\}$$ the set of active constraints at $x$, then (\ref{E3.7}) leads to
\begin{equation}\label{e3.10}
	\langle \nabla F_{i}(x),d_{SD}(x)\rangle = t_{SD}(x)=-\|d_{SD}(x)\|^{2},~i\in\mathcal{A}_{SD}(x).
\end{equation}

\begin{remark}
	As described in \cite{SK2018,CTY2023}, it is recommended to solve (\ref{DP}) rather than (\ref{QP}) to obtain the steepest descent direction for the following reasons: Firstly, the dimension of dual variables, which is equal to the number of objectives, is often much less than the dimension of decision variables. Secondly, (\ref{DP}) is a quadratic programming problem with a unit simplex constraint, and it can be efficiently solved by the Frank-Wolfe/conditional gradient method (the linear subproblem has a closed-form solution since the vertices of unit simplex constraint are known).
\end{remark}

\subsection{Newton-type methods}
Similar to its counterparts for SOPs, SDMO is sensitive to problem's conditioning. In response to this challenge, Fliege et al. \cite{FD2009} proposed Newton's method for MOPs. Newton's direction is the optimal solution to the following subproblem:
\begin{align}\label{nt}
	\min\limits_{d\in\mathbb{R}^{n}} \max\limits_{i\in[m]}\ \langle \nabla F_{i}(x),d\rangle+\frac{1}{2}\|d\|_{\nabla^{2}F_{i}(x)}^{2}.
\end{align}
The dual problem can be expressed as
\begin{align*}
	-\min\limits_{\lambda}&~\frac{1}{2} \nm{\sum\limits_{i\in[m]}\lambda_{i}\nabla F_{i}(x)}^{2}_{\left[\sum\limits_{i\in[m]}\lambda_{i}\nabla^{2} F_{i}(x)\right]^{-1}}\\
	\mathrm{ s.t.} &\ \lambda\in\Delta_{m}.	
\end{align*}
Denote $\lambda^{NT}(x)\in\Delta_{m}$ the optimal solution of the dual problem, then $$d_{NT}(x)=-\left[\sum\limits_{i\in[m]}\lambda_{i}^{NT}\nabla^{2} F_{i}(x)\right]^{-1}\left(\sum\limits_{i\in[m]}\lambda_{i}^{NT}(x)\nabla F_{i}(x)\right),$$ 
and
\begin{equation}\label{NTe}
	\dual{\nabla F_{i}(x),d_{NT}(x)}=\theta_{NT}(x)-\frac{1}{2}\nm{d_{NT}(x)}^{2}_{\nabla^{2}F_{i}(x)},~\forall i\in\mathcal{A}_{NT}(x).	
\end{equation}
Since Hessian matrices are not readily available, Qu et al. \cite{QG2011} and Povalej \cite{P2014} adopted BFGS formulation to approximate the Hessian matrices, namely, replacing $\nabla^{2}F_{i}(x)$ by $B_{i}(x)$ in (\ref{nt}) for $i\in[m]$. While Newton-type methods offer attractive convergence properties like locally superlinear convergence \cite{FD2009, P2014}, the high per-step computational cost counteracts the efficiency of outer iterations, resulting in suboptimal performance from a computational perspective.

\subsection{Variable metric method}
In order to balance per-step cost and better curvature exploration, Ansary and Panda \cite{AP2015} utilized a single positive matrix to approximate all the Hessian matrices. They developed the following variable metric descent direction, which represents the optimal solution of the following subproblem:
\begin{align}
	\min\limits_{d\in\mathbb{R}^{n}} \max\limits_{i\in[m]}\ \langle \nabla F_{i}(x),d\rangle+\frac{1}{2}\|d\|_{B(x)}^{2},
\end{align}
where $B(x)$ is a positive definite matrix. The subproblem can be efficiently solved via its dual:
\begin{align*}
	-\min\limits_{\lambda}&~\frac{1}{2} \nm{\sum\limits_{i\in[m]}\lambda_{i}\nabla F_{i}(x)}^{2}_{B(x)^{-1}}\\
	\mathrm{ s.t.} &\ \lambda\in\Delta_{m}.	
\end{align*}
Denote $\lambda^{VM}(x)\in\Delta_{m}$ an optimal solution of the dual problem, then $$d_{VM}(x)=-B(x)^{-1}\left(\sum\limits_{i\in[m]}\lambda_{i}^{VM}(x)\nabla F_{i}(x)\right),$$ 
and
\begin{equation}\label{VME}
	\dual{\nabla F_{i}(x),d_{VM}(x)}=-\nm{d_{VM}(x)}^{2}_{B(x)},~\forall i\in\mathcal{A}_{VM}(x).	
\end{equation}

\par For each iteration $k$, once the unique descent direction $d^{k}\neq0$ is obtained, the classical Armijo technique is employed for line search. 

\begin{algorithm}  
	\caption{\ttfamily Armijo\_line\_search}\label{alg1} 
	\LinesNumbered 
	\KwData{$x^{k}\in\mathbb{R}^{n},d^{k}\in\mathbb{R}^{n},JF(x^{k})\in\mathbb{R}^{m\times n},\sigma,\gamma\in(0,1), t_{k}=1$}
	\While{$F(x^{k}+t_{k} d^{k})- F(x^{k}) \not\preceq t_{k}\sigma  JF(x^{k})d^{k}$}{Update $t_{k}:= \gamma t_{k}$ }
	\Return{$t_{k}$}
\end{algorithm}

\par Next, we give the lower and upper bounds of stepsize along with $d^{k}_{VM}$.
\begin{proposition}\label{p1}
		Assume that $F_{i}$ is $L_{i}^{k}$-smooth and $\mu_{i}^{k}$-strongly convex relative to $\|\cdot\|_{B_{k}}$ ,\ $i\in[m]$. Then the stepsize along with $d_{VM}^{k}$ satisfies $\min\left\{1,\frac{2\gamma(1-\sigma)}{L^{k}_{\max}}\right\}\leq t_{k}\leq \min\left\{\frac{2(1-\sigma)}
		{\mu^{k}_{\max}},1\right\}$, where $L^{k}_{\max}:=\max\{L_{i}^{k}:i\in[m]\}$, $\mu^{k}_{\max}:=\max\{\mu^{k}_{i}:i\in\mathcal{A}_{VM}(x^{k})\}$.
\end{proposition}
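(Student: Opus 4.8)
The plan is to prove the two-sided bound by analyzing when the Armijo condition
$$F_i(x^k + t d^k_{VM}) - F_i(x^k) \leq t\sigma \dual{\nabla F_i(x^k), d^k_{VM}}$$
holds for every $i\in[m]$, using the relative smoothness/strong convexity hypotheses together with the characterization \eqref{VME} of the variable metric direction. First I would establish the \emph{lower bound}. Using $L_i^k$-smoothness relative to $\|\cdot\|_{B_k}$, for any $t>0$ we have
$$F_i(x^k+td^k_{VM}) - F_i(x^k) \leq t\dual{\nabla F_i(x^k),d^k_{VM}} + \frac{(L_i^k) t^2}{2}\|d^k_{VM}\|_{B_k}^2.$$
Since \eqref{VME} gives $\dual{\nabla F_i(x^k),d^k_{VM}} = -\|d^k_{VM}\|_{B_k}^2$ for active indices, and for inactive indices the KKT inequality gives $\dual{\nabla F_i(x^k),d^k_{VM}} \leq -\|d^k_{VM}\|_{B_k}^2$ (this is the analogue of \eqref{E3.10} for the variable metric subproblem — I would derive it from the constraint $\dual{\nabla F_i(x),d}\le t$ and the relation $\theta_{VM}(x) = t_{VM}(x) + \frac12\|d_{VM}(x)\|_{B(x)}^2 = -\frac12\|d_{VM}(x)\|_{B(x)}^2$), one shows that the Armijo inequality is implied by
$$-t\|d^k_{VM}\|_{B_k}^2 + \frac{L_i^k t^2}{2}\|d^k_{VM}\|_{B_k}^2 \leq -t\sigma\|d^k_{VM}\|_{B_k}^2,$$
i.e. by $t \leq \frac{2(1-\sigma)}{L_i^k}$; a quick check is needed that $\dual{\nabla F_i,d^k_{VM}}\le -\|d^k_{VM}\|^2_{B_k}$ is enough to push the argument through for the non-active indices (the left side $t\dual{\nabla F_i,d}+\frac{L_i^kt^2}{2}\|d\|^2_{B_k}$ is increasing in $\dual{\nabla F_i,d}$, so replacing it by the upper bound $-\|d\|^2_{B_k}$ is the worst case). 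Taking the minimum over $i$, the Armijo condition holds for all $t\le 2(1-\sigma)/L^k_{\max}$. Since the backtracking starts at $t=1$ and multiplies by $\gamma$, the accepted stepsize is at least $\min\{1,\gamma\cdot 2(1-\sigma)/L^k_{\max}\} = \min\{1, 2\gamma(1-\sigma)/L^k_{\max}\}$.

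Next I would establish the \emph{upper bound} $t_k \leq \min\{2(1-\sigma)/\mu^k_{\max}, 1\}$. The bound $t_k\le 1$ is immediate since the backtracking never increases $t$ beyond its initial value $1$. For the other part, suppose for contradiction that $t_k > 2(1-\sigma)/\mu^k_{\max}$. Pick an index $j\in\mathcal{A}_{VM}(x^k)$ attaining $\mu^k_{\max}=\mu^k_j$. The $\mu_j^k$-strong convexity relative to $\|\cdot\|_{B_k}$ gives the lower bound
$$F_j(x^k+t_kd^k_{VM}) - F_j(x^k) \geq t_k\dual{\nabla F_j(x^k),d^k_{VM}} + \frac{\mu_j^k t_k^2}{2}\|d^k_{VM}\|_{B_k}^2,$$
and since $j$ is active, \eqref{VME} turns this into $F_j(x^k+t_kd^k_{VM})-F_j(x^k) \geq -t_k\|d^k_{VM}\|_{B_k}^2 + \frac{\mu_j^k t_k^2}{2}\|d^k_{VM}\|_{B_k}^2$. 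But the Armijo acceptance of $t_k$ requires (again via \eqref{VME} for the active index $j$) that $F_j(x^k+t_kd^k_{VM})-F_j(x^k)\le -t_k\sigma\|d^k_{VM}\|_{B_k}^2$. Combining the two and dividing by $t_k\|d^k_{VM}\|^2_{B_k}>0$ (note $d^k_{VM}\neq 0$) yields $-1+\frac{\mu_j^k t_k}{2}\le -\sigma$, i.e. $t_k \le 2(1-\sigma)/\mu_j^k = 2(1-\sigma)/\mu^k_{\max}$, contradicting the assumption. Hence $t_k\le \min\{2(1-\sigma)/\mu^k_{\max},1\}$.

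A subtle point worth checking carefully: the upper-bound argument uses the Armijo inequality for the \emph{specific active index} $j$ with the largest $\mu^k_i$, and it requires $\mathcal{A}_{VM}(x^k)\neq\emptyset$ so that $\mu^k_{\max}$ is well-defined — this holds because $d^k_{VM}\neq 0$ forces a nonzero dual multiplier (if all $\lambda_i^{VM}=0$ then $d_{VM}=0$), and the convention is that the maximum is taken over the active set. I would also note that strictly speaking the Armijo inequality accepted by the algorithm is the \emph{vector} inequality $F(x^k+t_kd^k)-F(x^k)\preceq t_k\sigma JF(x^k)d^k$, whose $j$-th component is exactly the scalar inequality used above, so there is no gap. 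The main obstacle, and the only place requiring genuine care rather than bookkeeping, is establishing the KKT-type relation $\dual{\nabla F_i(x^k),d^k_{VM}}\le -\|d^k_{VM}\|^2_{B_k}$ for \emph{all} $i\in[m]$ (not just active ones) for the variable-metric subproblem, which is needed to make the lower-bound argument uniform over $i$; this parallels \eqref{E3.10} in the Euclidean case and follows by repeating that derivation with $\|\cdot\|$ replaced by $\|\cdot\|_{B_k}$, but it should be stated explicitly since the excerpt only records \eqref{VME} for active indices.
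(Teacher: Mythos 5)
Your proposal is correct and follows essentially the same route as the paper: the upper bound comes from combining the accepted Armijo inequality with the relative strong-convexity lower bound at an active index where \eqref{VME} gives $\dual{\nabla F_{i}(x^{k}),d^{k}_{VM}}=-\|d^{k}_{VM}\|_{B_{k}}^{2}$, and the lower bound comes from the relative smoothness upper bound together with $\dual{\nabla F_{i}(x^{k}),d^{k}_{VM}}\leq-\|d^{k}_{VM}\|_{B_{k}}^{2}$ and the granularity of backtracking (the paper phrases this as a bound on the last rejected step $t_{k}/\gamma$, you phrase it as sufficiency of all $t\leq 2(1-\sigma)/L^{k}_{\max}$ --- these are equivalent). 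Your explicit flagging of the KKT-type inequality for non-active indices is a point the paper uses without derivation, so no gap on either side.
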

\begin{proof}
	We use line search condition in Algorithm \ref{alg1} to get
	\begin{equation}\label{e4.7}
		F_{i}(x^{k}+t_{k}d^{k}_{VM})-F_{i}(x^{k}) \leq \sigma t_{k}\langle\nabla F_{i}(x^{k}),d^{k}_{VM}\rangle,\ \forall i\in[m].
	\end{equation}
	From the relative $\mu_{i}^{k}$-strong convexity of $F_{i}$, we have
	\begin{equation}\label{e4.8}
		F_{i}(x^{k}+t_{k}d_{VM}^{k})-F_{i}(x^{k})\geq t_{k}\langle\nabla F_{i}(x^{k}),d_{VM}^{k}\rangle+\frac{\mu^{k}_{i}}{2}||t_{k}d_{VM}^{k}
		||_{B_{k}}^{2},\ \forall i\in[m].
	\end{equation}
	It follows by (\ref{e4.7}) and (\ref{e4.8}) that
	$$\frac{\mu^{k}_{i}}{2}||t_{k}d^{k}_{VM}
	||_{B_{k}}^{2}\leq(\sigma-1)t_{k}\langle\nabla F_{i}(x^{k}),d_{VM}^{k}\rangle,\ \forall i\in[m].$$
	Hence,
	$$\max\limits_{i\in\mathcal{A}_{VM}(x^{k})}\frac{\mu^{k}_{i}}{2}||t_{k}d^{k}_{VM}
	||_{B_{k}}^{2}\leq(1-\sigma)t_{k}||d^{k}_{VM}
	||_{B_{k}}^{2},$$
	due to the definition of $\mathcal{A}_{VM}(x^{k})$.
	Then we conclude that
	$$t_{k}\leq\frac{2(1-\sigma)}{\max\limits_{i\in\mathcal{A}_{VM}(x^{k})}\mu^{k}_{i}}=
	\frac{2(1-\sigma)}{\mu^{k}_{\max}}.$$
	Notice that $t_{k}\leq1$, thus,
	$$t_{k}\leq \min\left\{\frac{2(1-\sigma)}
	{\mu^{k}_{\max}},1\right\}.$$
Next, we analyze the lower bound. Assume that $t_{k}<1$, then backtracking is conducted, so that
	\begin{equation}\label{E4.4}
		F_{i}\left(x^{k}+\frac{t_{k}}{\gamma}d^{k}_{VM}\right)-F_{i}(x^{k})>\sigma\frac{t_{k}}{\gamma}\langle\nabla F_{i}(x^{k}),d^{k}_{VM}\rangle
	\end{equation}
	for some $i\in[m]$. From the relative $L_{i}^{k}$-smoothness of $F_{i}$, we have
	\begin{equation}\label{E4.5}
		F_{i}\left(x^{k}+\frac{t_{k}}{\gamma}d^{k}_{VM}\right)-F_{i}(x^{k})\leq\frac{t_{k}}{\gamma}\langle\nabla F_{i}(x^{k}),d^{k}_{VM}\rangle + \frac{L_{i}^{k}}{2}\left\|\frac{t_{k}}{\gamma}d^{k}_{VM}\right\|_{B_{k}}^{2}
	\end{equation}
	for all $i\in[m]$. It follows by (\ref{E4.4})-(\ref{E4.5}) and $\langle\nabla F_{i}(x^{k}),d^{k}_{VM}\rangle\leq-\|d^{k}_{VM}\|_{B_{k}}^{2}$ that
	\begin{equation*}\label{E4.6}
		t_{k}\geq\frac{2\gamma(1-\sigma)}{L^{k}_{i}}
	\end{equation*}
	for some $i\in[m]$. It follows that
	\begin{equation*}\label{E4.8}
		t_{k}\geq\frac{2\gamma(1-\sigma)}{L^{k}_{\max}}.
	\end{equation*}
	Note that the above analysis is under the assumption $t_{k}<1$, then
	\begin{equation*}\label{E4.9}
		t_{k}\geq\min\left\{1,\frac{2\gamma(1-\sigma)}{L^{k}_{\max}}\right\}.
	\end{equation*}
	This completes the proof.
\end{proof}
\begin{remark}
	The stepsize along with $d^{k}_{VM}$ can be relatively small when  $\mu^{k}_{\max}$  has a significant value, even if $F_{i}$ is not ill-conditioned relative to $\|\cdot\|_{B_{k}}$ (a relatively small value of $\frac{L^{k}_{i}}{\mu^{k}_{i}}$). This small stepsize hampers the local superlinear convergence of VMMO and leads to inferior performance.
\end{remark}

\begin{remark}
	In \cite{CLY2023}, an aggregated line search was employed to achieve larger stepsizes, resulting in local superlinear convergence for VMMO. However, it is essential to note that the aggregated line search cannot guarantee that all objective functions decrease in each iteration, and the global convergence of VMMO with the aggregated line search approach remains unestablished.
\end{remark}
\subsection{Barzilai-Borwein descent method}
As described in \cite{CTY2023}, imbalances among objective functions lead to small stepsize in SDMO, which decelerates the convergence. This is primarily due to equation (\ref{E3.10}), where the steepest descent direction results in a similar decrease in objective values for different objectives between two consecutive iterations. Observe the equation (\ref{VME}); imbalances among objective functions also decelerate the convergence of VMMO. It is worth noting that Newton's direction achieves distinctive inner products for different objectives, as shown in (\ref{NTe}). This explains why Newton-type methods accommodate larger stepsize and have the potential to alleviate imbalances among objective functions. 
\par To achieve distinctive inner products between descent direction and gradients for a first-order method, Chen et al. \cite{CTY2023} devised the Barzilai-Borwein descent direction, which is the optimal solution of the following subproblem:
\begin{equation}\label{dbb}
	\min\limits_{d\in\mathbb{R}^{n}}\max\limits_{i\in[m]}\left\{\frac{\langle\nabla F_{i}(x^{k}),d\rangle}{\alpha_{i}(x^{k})}+\frac{1}{2}\nm{d}^{2}\right\},
\end{equation}
where $\alpha(x^{k})\in\mathbb{R}^{m}_{++}$ is given by Barzilai-Borwein method:
\begin{equation}\label{bbalpha_k}
	\alpha_{i}(x^{k})=\left\{
	\begin{aligned}
		&\max\left\{\alpha_{\min},\min\left\{\frac{\langle s^{k-1},y^{k-1}_{i}\rangle}{\nm{s^{k-1}}^{2}}, \alpha_{\max}\right\}\right\}, & \langle s^{k-1},y^{k-1}_{i}\rangle&>0, \\
		&\max\left\{\alpha_{\min},\min\left\{\frac{\nm{y^{k-1}_{i}}}{\nm{s^{k-1}}}, \alpha_{\max}\right\}\right\}, & \langle s^{k-1},y^{k-1}_{i}\rangle&<0, \\
		& \alpha_{\min}, &  \langle s^{k-1},y^{k-1}_{i}\rangle&=0,
	\end{aligned}
	\right.
\end{equation}
for all $i\in[m]$, where $\alpha_{\max}$ is a sufficient large positive constant and $\alpha_{\min}$ is a sufficient small positive constant, $s^{k-1}=x^{k}-x^{k-1},\ y^{k-1}_{i}=\nabla F_{i}({x^{k}})-\nabla F_{i}(x^{k-1}),\ i\in[m].$ In this case, the dual problem can be written as:
\begin{align*}
	-\min\limits_{\lambda}&~\frac{1}{2} \nm{\sum\limits_{i\in[m]}\frac{\lambda_{i}\nabla F_{i}(x^{k})}{\alpha_{i}(x^{k})}}^{2}\\
	\mathrm{ s.t.} &\ \lambda\in\Delta_{m}.
\end{align*}
Denote $\lambda^{BB}(x^{k})$ an optimal solution of the dual problem. Similarly, we have $$d_{BB}^{k}=-\sum\limits_{i\in[m]}\frac{\lambda_{i}^{BB}(x^{k})\nabla F_{i}(x^{k})}{\alpha_{i}(x^{k})},$$
and
\begin{equation}\label{bbEdiff}
	\langle\nabla F_{i}(x^{k}),d_{BB}^{k}\rangle=-\alpha_{i}(x^{k})\|d^{k}_{BB}\|^{2},~\forall i \in\mathcal{A}_{BB}(x^{k}).
\end{equation}
It is evident that $\langle\nabla F_{i}(x^{k}),d_{BB}^{k}\rangle\neq\langle\nabla F_{j}(x^{k}),d_{BB}^{k}\rangle,~\forall i,j\in\mathcal{A}_{BB}(x^{k})$ due to the objective-based $\alpha_{i}(x^{k})$. We establish the following bounds for the stepsize along the Barzilai-Borwein descent direction.
\begin{lemma}[Proposition 2 of \cite{CTY2023}]
	Assume that $F_{i}$ is $L_{i}$-smooth and $\mu_{i}$-strongly convex for $i\in[m]$, and let $\sigma\leq\frac{1}{2}$ in line search. Then the stepsize along with $d^{k}_{BB}$ satisfies $\min\{1,\bar{t}_{\min}\}\leq t_{k}\leq1$, where $\bar{t}_{\min}:=\min\left\{\frac{2\gamma(1-\sigma)\mu_{i}}{L_{i}}:i\in[m]\right\}$.
\end{lemma}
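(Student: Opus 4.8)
The plan is to follow the template of Proposition \ref{p1}, with Euclidean quantities in place of the relative-metric ones and with one genuinely new ingredient: a lower bound on the Barzilai–Borwein parameters $\alpha_i(x^k)$ in terms of $\mu_i$. The upper bound $t_k\le 1$ needs no argument: Algorithm \ref{alg1} sets $t_k=1$ and only ever multiplies the trial stepsize by $\gamma\in(0,1)$, so an accepted stepsize never exceeds $1$.

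For the lower bound I would first record two preliminary facts. \textbf{(a)} The descent inequality $\langle\nabla F_i(x^k),d^k_{BB}\rangle\le-\alpha_i(x^k)\|d^k_{BB}\|^2$ holds for \emph{every} $i\in[m]$, not only on $\mathcal{A}_{BB}(x^k)$: by strong duality for the subproblem (\ref{dbb}) one gets $\theta_{BB}(x^k)=-\tfrac12\|d^k_{BB}\|^2$ exactly as in (\ref{E3.9}), and since each inner term $\tfrac{\langle\nabla F_i(x^k),d\rangle}{\alpha_i(x^k)}+\tfrac12\|d\|^2$ evaluated at the minimizer is $\le\theta_{BB}(x^k)$, rearranging gives the claimed inequality, which strengthens (\ref{bbEdiff}) off the active set. \textbf{(b)} $\alpha_i(x^k)\ge\mu_i$ for all $i\in[m]$: since the iterate comes from a nonzero direction, $s^{k-1}=x^k-x^{k-1}\neq0$, and $\mu_i$-strong convexity yields $\langle s^{k-1},y^{k-1}_i\rangle\ge\mu_i\|s^{k-1}\|^2>0$, while $L_i$-smoothness with Cauchy–Schwarz yields $\langle s^{k-1},y^{k-1}_i\rangle\le L_i\|s^{k-1}\|^2$; hence the Barzilai–Borwein ratio $\langle s^{k-1},y^{k-1}_i\rangle/\|s^{k-1}\|^2$ lies in $[\mu_i,L_i]\subseteq[\alpha_{\min},\alpha_{\max}]$, so the clipping in (\ref{bbalpha_k}) is inactive and $\alpha_i(x^k)=\langle s^{k-1},y^{k-1}_i\rangle/\|s^{k-1}\|^2\ge\mu_i$ (the first iteration being covered by the initialization convention on $\alpha_i(x^0)$).

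With (a) and (b) the line-search estimate is routine. If $t_k<1$, then the trial value $t_k/\gamma$ was rejected, so for some $i\in[m]$ one has $F_i(x^k+\tfrac{t_k}{\gamma}d^k_{BB})-F_i(x^k)>\sigma\tfrac{t_k}{\gamma}\langle\nabla F_i(x^k),d^k_{BB}\rangle$; combining this with the $L_i$-smoothness quadratic upper bound at $x^k+\tfrac{t_k}{\gamma}d^k_{BB}$, cancelling the common factor $\tfrac{t_k}{\gamma}>0$, and using $\langle\nabla F_i(x^k),d^k_{BB}\rangle\le-\alpha_i(x^k)\|d^k_{BB}\|^2<0$ from (a), one solves for $t_k$ to obtain $t_k>\tfrac{2\gamma(1-\sigma)\alpha_i(x^k)}{L_i}\ge\tfrac{2\gamma(1-\sigma)\mu_i}{L_i}\ge\bar{t}_{\min}$ by (b). Together with $t_k\le1$ this gives $t_k\ge\min\{1,\bar{t}_{\min}\}$. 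The hypothesis $\sigma\le\tfrac12$ is consistent with the line-search requirement $\sigma\in(0,1)$ and only affects the size of the constants. The one point to treat carefully is step (b) — verifying that strong convexity forces the positive-curvature branch of (\ref{bbalpha_k}), that the safeguards $\alpha_{\min},\alpha_{\max}$ do not truncate the ratio, and that the $k=0$ initialization is compatible with the stated bound; everything else transfers verbatim from the steepest-descent/variable-metric analysis.
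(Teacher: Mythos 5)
Your proposal is correct and follows essentially the same route the paper uses for its own analogous results: the paper cites this lemma from \cite{CTY2023} without reproving it, but its proof of the corresponding variable-metric bound proceeds exactly as you do, first establishing $\mu_i\leq\alpha_i(x^k)\leq L_i$ from strong convexity and smoothness of the secant quotient, then rerunning the backtracking argument of Proposition~\ref{p1} with the descent inequality $\langle\nabla F_{i}(x^{k}),d^{k}_{BB}\rangle\leq-\alpha_{i}(x^{k})\|d^{k}_{BB}\|^{2}$ in place of $\langle\nabla F_{i}(x^{k}),d^{k}\rangle\leq-\|d^{k}\|^{2}$. Your extra care about the safeguards $\alpha_{\min},\alpha_{\max}$, the validity of the descent inequality off the active set, and the $k=0$ initialization fills in details the paper leaves implicit, and your observation that $\sigma\leq\tfrac12$ is not actually needed for the stated bounds is accurate.
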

\begin{remark}
The Barzilai-Borwein descent method for MOPs (BBDMO) can attain relatively large stepsizes as long as all objective functions are not ill-conditioned. Recently, Chen et al. \cite{CTY2023b} demonstrated that BBDMO can mitigate interference and imbalances among objectives, resulting in improved convergence rates compared to SDMO. However, it is essential to note that BBDMO remains sensitive to conditioning, as observed from a theoretical perspective \cite{CTY2023b}.
\end{remark}
\section{BBDMO\_VM:~Barzilai-Borwein descent method for MOPs with variable metrics}\label{sec4}
This section attempts to develop a method that enjoys cheap per-step cost and is not sensitive to imbalances and conditioning. Before presenting the method, let us summarize the characteristics of the methods discussed in the previous section.

\begin{table}[h]
	\centering
	\begin{tabular}{|c|c|c|c|}
		\hline
		method &
		\begin{tabular}[c]{@{}c@{}}\textbf{cheap} per-step \\ cost\end{tabular} &
		\begin{tabular}[c]{@{}c@{}}\textbf{not} sensitive \\ to imbalances\end{tabular} &
		\begin{tabular}[c]{@{}c@{}}\textbf{not} sensitive \\ to conditioning\end{tabular} \\ \hline
		SDMO  & \ding{51} & \ding{55} & \ding{55} \\ \hline
		NMO   & \ding{55} & \ding{51} & \ding{51} \\ \hline
		VMMO  & \ding{51} & \ding{55} & \ding{51} \\ \hline
		BBDMO & \ding{51} & \ding{51} & \ding{55} \\ \hline
	\end{tabular}
\caption{The characteristics of SDMO, NMO, VMMO, and BBDMO.}
\end{table}
Naturally, we aim to leverage the strengths of both VMMO and BBDMO in the development of the Barzilai-Borwein descent direction with variable metrics:

\begin{equation}\label{d}
	d^{k}:=\mathop{\arg\min}\limits_{d\in\mathbb{R}^{n}}\max\limits_{i\in[m]}\left\{\frac{\langle\nabla F_{i}(x^{k}),d\rangle}{\alpha^{k}_{i}}+\frac{1}{2}\nm{d}^{2}_{B_{k}}\right\},
\end{equation}
where $\alpha^{k}\succ 0$ mitigates the imbalances among objective functions, and $B_{k}\succ0$ is applied to better capture the local geometry of the problem. Notably, the subproblem (\ref{d}) can also be efficiently solved via its dual:
\begin{align*}
	-\min\limits_{\lambda}&~\frac{1}{2} \nm{\sum\limits_{i\in[m]}\frac{\lambda_{i}\nabla F_{i}(x^{k})}{\alpha_{i}^{k}}}^{2}_{B_{k}^{-1}}\\
	\mathrm{ s.t.} &\ \lambda\in\Delta_{m},
\end{align*}
provided that $B_{k}^{-1}$ can be available with cheap computation. Denote $\lambda^{k}$ an optimal solution of the dual problem. It is evident that $$d^{k}=-B_{k}^{-1}\left(\sum\limits_{i\in[m]}\frac{\lambda_{i}^{k}\nabla F_{i}(x^{k})}{\alpha_{i}^{k}}\right),$$
and
\begin{equation}\label{Ediff}
	\langle\nabla F_{i}(x^{k}),d^{k}\rangle=-\alpha^{k}_{i}\|d^{k}\|^{2}_{B_{k}},~\forall\lambda^{k}_{i}>0.
\end{equation}
\begin{remark}
	Given that $\alpha^{k}_{i}$ is objective-based, equation (\ref{Ediff}) implies that different objective functions achieve distinct descent along the variable metric Barzilai-Borwein descent direction.
\end{remark}
\par In contrast to BBDMO, here we approximate the secant equation relative to $\|\cdot\|_{B_{k}}$ and then set
$\alpha^{k}\in\mathbb{R}^{m}_{++}$ as follows:
\begin{equation}\label{alpha_k}
	\alpha^{k}_{i}=\left\{
	\begin{aligned}
		&\max\left\{\alpha_{\min},\min\left\{\frac{\langle s^{k-1},y^{k-1}_{i}\rangle}{\nm{s^{k-1}}^{2}_{B_{k}}}, \alpha_{\max}\right\}\right\}, & \langle s^{k-1},y^{k-1}_{i}\rangle&>0, \\
		&\max\left\{\alpha_{\min},\min\left\{\frac{\nm{y^{k-1}_{i}}}{\nm{B_{k}s^{k-1}}}, \alpha_{\max}\right\}\right\}, & \langle s^{k-1},y^{k-1}_{i}\rangle&<0, \\
		& \alpha_{\min}, &  \langle s^{k-1},y^{k-1}_{i}\rangle&=0.
	\end{aligned}
	\right.
\end{equation}

Next, we will present several properties of $d^{k}$. 
\begin{lemma}\label{lem1}
	Assume that $aI\preceq B_{k}\preceq bI$. Let $d^{k}$ be defined as (\ref{d}) and $\alpha^{k}_{i}$ be set as (\ref{alpha_k}). Then, the following statements hold.
	\begin{itemize}
		\item[$\mathrm{(i)}$] the following assertions are equivalent:
		\subitem$\mathrm{(a)}$ The point $x^{k}$ is non-critical;
		\subitem$\mathrm{(b)}$ $d^{k}\neq0$;
		\subitem$\mathrm{(c)}$ $d^{k}$ is a descent direction.
		\item[$\mathrm{(ii)}$] if there exists a convergent subsequence $x^{k}\stackrel{\mathcal{K}}{\longrightarrow} x^{*}$ such that $d^{k}\stackrel{\mathcal{K}}{\longrightarrow}0$, then $x^{*}$ is Pareto critical.
	\end{itemize}
\end{lemma}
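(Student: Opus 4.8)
The plan is to lean on the strong-duality identity $\theta(x^k)=-\tfrac12\|d^k\|_{B_k}^2$ for the subproblem (\ref{d}), where $\theta(x^k)$ denotes its optimal value. This identity, together with the per-index estimate $\langle\nabla F_i(x^k),d^k\rangle\le-\alpha^k_i\|d^k\|_{B_k}^2$ for \emph{every} $i\in[m]$ (sharpening (\ref{Ediff}), which is stated only for the active indices), follows exactly as in the steepest-descent case: rewrite (\ref{d}) as a smooth quadratic program with constraints $\langle\nabla F_i(x^k),d\rangle/\alpha^k_i\le t$, write down its KKT system as in (\ref{E3.3})--(\ref{E3.7}), and invoke strong duality as in (\ref{E3.9})--(\ref{E3.10}). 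I would also record at the outset the uniform bounds $\alpha_{\min}\le\alpha^k_i\le\alpha_{\max}$ from (\ref{alpha_k}) and $aI\preceq B_k\preceq bI$; these are what keep all subsequent estimates independent of $k$.

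For part (i) I would close the cycle (a)$\Rightarrow$(b)$\Rightarrow$(c)$\Rightarrow$(a). \emph{(a)$\Rightarrow$(b):} if $x^k$ is non-critical there is $v\neq0$ with $\langle\nabla F_i(x^k),v\rangle<0$ for all $i\in[m]$; evaluating the objective of (\ref{d}) at $d=tv$ gives $\tfrac{t^2}{2}\|v\|_{B_k}^2+t\max_{i\in[m]}\tfrac{\langle\nabla F_i(x^k),v\rangle}{\alpha^k_i}$, which is strictly negative for small $t>0$, so $\theta(x^k)<0$ and hence $d^k\neq0$ by $\theta(x^k)=-\tfrac12\|d^k\|_{B_k}^2$. \emph{(b)$\Rightarrow$(c):} $d^k\neq0$ forces $\|d^k\|_{B_k}^2>0$ since $B_k\succ0$, and then $\langle\nabla F_i(x^k),d^k\rangle\le-\alpha^k_i\|d^k\|_{B_k}^2\le-\alpha_{\min}\|d^k\|_{B_k}^2<0$ for all $i$, i.e. $d^k$ is a descent direction. \emph{(c)$\Rightarrow$(a):} a descent direction satisfies $JF(x^k)d^k\in-\mathbb{R}^{m}_{++}$, the precise negation of the criticality condition in Definition \ref{def3}.

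For part (ii) I would argue by contradiction. Suppose $x^*$ is not Pareto critical, so there are $v\in\mathbb{R}^n$ and $\delta>0$ with $\langle\nabla F_i(x^*),v\rangle\le-\delta$ for all $i\in[m]$. By continuity of the $\nabla F_i$ and $x^k\stackrel{\mathcal{K}}{\longrightarrow}x^*$, we have $\langle\nabla F_i(x^k),v\rangle\le-\delta/2$ for all $i$ once $k\in\mathcal{K}$ is large. Using optimality of $d^k$ against the test vector $tv$ together with the uniform bounds,
\begin{align*}
\theta(x^k)&\le\max_{i\in[m]}\frac{t\langle\nabla F_i(x^k),v\rangle}{\alpha^k_i}+\frac{t^2}{2}\|v\|_{B_k}^2\\
&\le-\frac{t\delta}{2\alpha_{\max}}+\frac{t^2b}{2}\|v\|^2
\end{align*}
for every $t>0$; minimizing the right-hand side over $t$ yields a constant $\eta>0$ with $\theta(x^k)\le-\eta$ for all large $k\in\mathcal{K}$. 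On the other hand $\theta(x^k)=-\tfrac12\|d^k\|_{B_k}^2\ge-\tfrac{b}{2}\|d^k\|^2\to0$ along $\mathcal{K}$, since $d^k\stackrel{\mathcal{K}}{\longrightarrow}0$ --- a contradiction, so $x^*$ is Pareto critical.

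I do not expect a genuine obstacle: the whole argument is a transcription of the $\theta_{SD}$ analysis, with the bounds $\alpha_{\min}\le\alpha^k_i\le\alpha_{\max}$ and $aI\preceq B_k\preceq bI$ absorbing the extra freedom in the scaling and the metric. The one point that needs a little care is establishing the strong-duality identity and the \emph{all-index} inequality $\langle\nabla F_i(x^k),d^k\rangle\le-\alpha^k_i\|d^k\|_{B_k}^2$ (the excerpt records (\ref{Ediff}) only for active $i$); after that everything is elementary.
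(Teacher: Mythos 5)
Your proof is correct, but it takes a genuinely more self-contained route than the paper. For part (i) the paper simply notes that $\alpha_{\min}\leq\alpha^{k}_{i}\leq\alpha_{\max}$ and $aI\preceq B_{k}\preceq bI$ and defers to \cite[Lemma 3.2]{P2014}, whereas you prove the cycle (a)$\Rightarrow$(b)$\Rightarrow$(c)$\Rightarrow$(a) directly from the strong-duality identity $\theta(x^{k})=-\tfrac{1}{2}\|d^{k}\|_{B_{k}}^{2}$ and the all-index analogue of (\ref{E3.10}); both ingredients do follow from the KKT system exactly as you say, so this is a complete (if longer) substitute for the citation. For part (ii) the divergence is more substantive: the paper proves the quantitative comparison $\tfrac{1}{2}\|d^{k}\|_{B_{k}}^{2}\geq\tfrac{1}{2b\alpha_{\max}^{2}}\|d^{k}_{SD}\|^{2}$ via a max--min rescaling, deduces $d^{k}_{SD}\stackrel{\mathcal{K}}{\longrightarrow}0$, and then invokes the continuity of the steepest-descent mapping $x\mapsto d_{SD}(x)$ from \cite[Lemma 3]{FS2000} to conclude $d_{SD}(x^{*})=0$. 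You instead argue by contradiction at the limit point, using only continuity of the gradients and a uniform negative upper bound on $\theta(x^{k})$ obtained by testing the subproblem against $tv$. Your argument buys independence from the continuity result for $d_{SD}$ (and from the steepest-descent machinery altogether), at the cost of not producing the explicit comparison between $d^{k}$ and $d^{k}_{SD}$, which the paper also reuses as a remark on global convergence. Both proofs are sound; no gaps.
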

\begin{proof}
	Since $\alpha_{\min}\leq\alpha_{i}^{k}\leq\alpha_{\max}$ and $aI\preceq B_{k}\preceq bI$, assertion (i)  can be obtained by using the same arguments as in the proof of \cite[Lemma 3.2]{P2014}. Nexct, we prove assertion (ii). We use the definition of $d^{k}$ and the fact that $\alpha_{i}^{k}\leq\alpha_{\max}$ to get 
	\begin{equation}\label{ew13}
		\begin{aligned}
			\frac{1}{2}\|d^{k}\|_{B_{k}}^{2}
			&=\max\limits_{d\in\mathbb{R}^{n}}\min\limits_{i\in[m]}\left\{\frac{
				\left\langle\nabla F_{i}(x^{k}),-d\right\rangle}{\alpha_{i}^{k}}-\frac{1}{2}\|d\|^{2}_{B_{k}}\right\}\\
			&\geq\max\limits_{d\in\mathbb{R}^{n}}\min\limits_{i\in[m]}\left\{\frac{
				\left\langle\nabla F_{i}(x^{k}),-d\right\rangle }{\alpha_{\max}}-\frac{b}{2}\|d\|^{2}\right\}\\
			&=\frac{1}{b(\alpha_{\max})^{2}}\max\limits_{b\in\mathbb{R}^{n}}\min\limits_{i\in[m]}\left\{\left\langle\nabla F_{i}(x^{k}),-b\alpha_{\max}d\right\rangle-\frac{(b\alpha_{\max})^{2}}{2}\|d\|^{2}\right\}\\
			&=\frac{1}{b(\alpha_{\max})^{2}}\max\limits_{b\in\mathbb{R}^{n}}\min\limits_{i\in[m]}\left\{\left\langle\nabla F_{i}(x^{k}),-d\right\rangle -\frac{1}{2}\|d\|^{2}\right\}\\
			&=\frac{1}{2b(\alpha_{\max})^{2}}\|d_{SD}^{k}\|^{2}.
		\end{aligned}
	\end{equation}
This, together with the fact $d^{k}\stackrel{\mathcal{K}}{\longrightarrow}0$, implies $d^{k}_{SD}\stackrel{\mathcal{K}}{\longrightarrow}0$. Moreover, from the continuity of $d_{SD}$ (\cite[Lemma 3]{FS2000}) and the fact that $x^{k}\stackrel{\mathcal{K}}{\longrightarrow} x^{*}$, we can deduce that $d_{SD}(x^{*})=0$. The desired result follows.
\end{proof}
\begin{remark}
	The continuity of $d_{SD}$ plays a crucial role in proving the global convergence of SDMO. For BBDMO\_VM, Lemma \ref{lem1}(ii) can replace the corresponding condition. 
\end{remark}

\par We also give the lower and upper bounds of stepsize along with $d^{k}$.
\begin{proposition}
	Assume that $F_{i}$ is $L_{i}^{k}$-smooth and $\mu_{i}^{k}$-strongly convex relative to $\|\cdot\|_{B_{k}}$ ,\ $i\in[m]$, and let $\sigma\leq\frac{1}{2}$ in line search. Then the stepsize along with $d^{k}$ satisfies  $\min\{1,t_{\min}\}\leq t_{k}\leq1$, where $t_{\min}:=\min\left\{\frac{2\gamma(1-\sigma)\mu^{k}_{i}}{L^{k}_{i}}:i\in[m]\right\}$.
\end{proposition}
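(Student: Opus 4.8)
I would begin by noting that we may assume $x^k$ non-critical (otherwise the method has terminated), so $d^k\neq0$ by Lemma \ref{lem1}(i). The overall plan is to follow the proof of Proposition \ref{p1}, with the rule (\ref{alpha_k}) supplying the one extra ingredient that replaces the VMMO ceiling $2(1-\sigma)/\mu^k_{\max}$ by the trivial bound $1$. The inequality $t_k\leq1$ is immediate, since Algorithm \ref{alg1} starts from $t_k=1$ and only multiplies by $\gamma\in(0,1)$. Repeating the strong-convexity computation of Proposition \ref{p1} for $d^k$ would in addition give $t_k\leq 2(1-\sigma)\alpha^k_i/\mu^k_i$ for every active $i$; but once $\alpha^k_i\geq\mu^k_i$ is established, this is at least $2(1-\sigma)\geq1$, the last inequality being precisely where $\sigma\leq\tfrac{1}{2}$ enters, so that---unlike in VMMO---the line search imposes no effective ceiling below $1$.

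For the lower bound I would first record the estimate $\langle\nabla F_i(x^k),d^k\rangle\leq-\alpha^k_i\|d^k\|^2_{B_k}$ for all $i\in[m]$, which extends (\ref{Ediff}) from the active indices to all of $[m]$: by strong duality the optimal value of (\ref{d}) equals minus the dual objective at $\lambda^k$, namely $-\tfrac{1}{2}\|d^k\|^2_{B_k}$ (cf. (\ref{E3.9})), so that $\max_{j\in[m]}\langle\nabla F_j(x^k),d^k\rangle/\alpha^k_j=-\|d^k\|^2_{B_k}$, which gives the claimed inequality for every individual $i$.

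The crux---and the step I expect to be the main obstacle---is the claim that $\alpha^k_i\geq\mu^k_i$ for every $i\in[m]$. This is delicate not because any single estimate is hard, but because it is the one point at which the curvature of the problem, the update (\ref{alpha_k}) and the safeguards $\alpha_{\min},\alpha_{\max}$ must all be reconciled, and where the boundary cases live. Since $x^{k-1}$ is non-critical, $d^{k-1}\neq0$ by Lemma \ref{lem1}(i) and $t_{k-1}>0$, hence $s^{k-1}=t_{k-1}d^{k-1}\neq0$. Relative $\mu^k_i$-strong convexity of $F_i$ then gives $\langle s^{k-1},y^{k-1}_i\rangle\geq\mu^k_i\|s^{k-1}\|^2_{B_k}>0$, so the first branch of (\ref{alpha_k}) applies and $\langle s^{k-1},y^{k-1}_i\rangle/\|s^{k-1}\|^2_{B_k}\geq\mu^k_i$; truncating this ratio to $[\alpha_{\min},\alpha_{\max}]$ cannot bring it below $\mu^k_i$ as long as $\alpha_{\max}\geq\mu^k_i$, which holds because $\alpha_{\max}$ is taken sufficiently large (indeed, relative $L^k_i$-smoothness bounds the same ratio above by $L^k_i$, so $\alpha_{\max}\geq\max_i L^k_i$ already suffices). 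The first iteration, where no pair $(s^{-1},y^{-1}_i)$ exists, is covered by the prescribed value of $\alpha^0_i$.

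It then remains to close the lower bound exactly as in Proposition \ref{p1}. If $t_k=1$ we are done; otherwise backtracking occurred, so $F_i(x^k+\tfrac{t_k}{\gamma}d^k)-F_i(x^k)>\sigma\tfrac{t_k}{\gamma}\langle\nabla F_i(x^k),d^k\rangle$ for some $i\in[m]$. Combining this with relative $L^k_i$-smoothness of $F_i$, dividing through by $t_k/\gamma>0$, and inserting $-\langle\nabla F_i(x^k),d^k\rangle\geq\alpha^k_i\|d^k\|^2_{B_k}\geq\mu^k_i\|d^k\|^2_{B_k}$ together with $\|d^k\|_{B_k}>0$, one obtains $t_k>2\gamma(1-\sigma)\mu^k_i/L^k_i\geq t_{\min}$. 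Combined with $t_k\leq1$, this is precisely $\min\{1,t_{\min}\}\leq t_k\leq1$.
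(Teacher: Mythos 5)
Your proposal is correct and follows essentially the same route as the paper: the paper's own proof simply derives $\mu^{k}_{i}\leq\alpha^{k}_{i}\leq L^{k}_{i}$ from the relative smoothness and strong convexity and then invokes the argument of Proposition \ref{p1}, which is exactly what you carry out in detail (including the needed extension of (\ref{Ediff}) to all indices via strong duality). The only nitpick is your remark about the first iteration: Algorithm \ref{bbvm} chooses $x^{-1}$ near $x^{0}$, so the pair $(s^{-1},y^{-1}_{i})$ does exist and the same estimate applies at $k=0$; this does not affect correctness.
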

\begin{proof}
	From the relative $L^{k}_{i}$-smoothness and $\mu_{i}^{k}$-strong convexity of $F_{i}$, we derive that
	$$\mu^{k}_{i}\leq\alpha^{k}_{i}\leq L^{k}_{i}.$$
	Then, the lower and upper bounds can be obtained by the similar argument as
	presented in the proof of Proposition \ref{p1}.
\end{proof}	

\begin{remark}
	If $F_{i}$ is not ill-conditioned relative to $\|\cdot\|_{B_{k}}$, then line search along with $d^{k}$ can achieve a relatively large stepsize.
\end{remark}

The Barzilai-Borwein descent method for MOPs with variable metrics is described as follows.
\begin{algorithm}  
	\caption{{\ttfamily{Barzilai-Borwein\_descent\_method\_for\_MOPs\_with\_variable\_metrics}}}\label{bbvm}
	\LinesNumbered  
	\KwData{$x^{0}\in\mathbb{R}^{n},~0\prec B_{0}$}
	{Choose $x^{-1}$ in a small neighborhood of $x^{0}$}\\
	\For{$k=0,...$}{Update $\alpha^{k}_{i}$ as (\ref{alpha_k}),\ $i\in[m]$\\
		Update $d^{k}:=\mathop{\min}\limits_{d\in\mathbb{R}^{n}}\max\limits_{i\in[m]}\left\{\frac{\langle\nabla F_{i}(x^{k}),d\rangle}{\alpha^{k}_{i}}+\frac{1}{2}\nm{d}^{2}_{B_{k}}\right\}$\\
		\eIf{$d^{k}=0$}{ {\bf{return}} Pareto critical point $x^{k}$ }{$t_{k}:=$ {\ttfamily Armijo\_line\_search}$\left(x^{k},d^{k},JF(x^{k})\right)$\\
			$x^{k+1}:= x^{k}+t_{k}d^{k}$\\
		Update $B_{k}$}}  
\end{algorithm}

\section{Convergence analysis}\label{sec5}
Algorithm \ref{bbvm} terminates with a Pareto critical point in a finite number of iterations or generates an infinite sequence of noncritical points. In the sequel, we will assume that Algorithm \ref{bbvm} produces an infinite sequence of noncritical points. The main goal of this section is to analyze the convergence of BBDMO\_VM.
\subsection{Global convergence}
Before presenting the global convergence of BBDMO\_VM, some mild assumptions are presented as follows.
\begin{assumption}\label{a1}
	The sequence $\{B_{k}\}$ of matrices is uniformly positive definite, i.e., there exist two positive constants, $a$ and $b$, such that
	$$aI\preceq B_{k}\preceq bI,\ \forall k.$$
\end{assumption}
\begin{assumption}\label{a2}
	For any $x^{0}\in\mathbb{R}^{n}$, the level set $\mathcal{L}_{F}(x^{0})=\{x:\ F(x)\preceq F(x^{0})\}$ is compact.
\end{assumption}
\begin{theorem}\rm\label{t1}
	Suppose that Assumptions \ref{a1} and \ref{a2} hold. Let $\{x^{k}\}$ be the sequence generated by Algorithm \ref{bbvm}. Then $\{x^{k}\}$ has at least one accumulation point, and every accumulation point is a Pareto critical point.
\end{theorem}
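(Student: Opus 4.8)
The plan is to follow the standard template for global convergence of descent methods in multiobjective optimization, adapted to the variable-metric Barzilai–Borwein setting. First I would establish that the sequence $\{x^k\}$ stays in the compact level set $\mathcal{L}_F(x^0)$: since the Armijo line search in Algorithm \ref{alg1} forces $F(x^{k+1}) \preceq F(x^k) + t_k \sigma JF(x^k)d^k \preceq F(x^k)$ (using that $d^k$ is a descent direction by Lemma \ref{lem1}(i) and the line-search acceptance criterion), the whole sequence lies in $\mathcal{L}_F(x^0)$, which is compact by Assumption \ref{a2}. Hence $\{x^k\}$ has at least one accumulation point, settling the first claim.

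Next I would show the objective decrease is summable in a way that forces $d^k \to 0$ along a subsequence. The monotone decrease of each $F_i(x^k)$ together with the lower bound on $\{F_i\}$ on the compact level set gives $\sum_k \bigl(F_i(x^k) - F_i(x^{k+1})\bigr) < \infty$, hence $F_i(x^k) - F_i(x^{k+1}) \to 0$ for every $i$. Combining the line-search inequality with (\ref{Ediff}) (evaluated at an active index) yields, for any active $i$,
\begin{equation*}
	F_i(x^k) - F_i(x^{k+1}) \geq -t_k \sigma \langle \nabla F_i(x^k), d^k\rangle = t_k \sigma \alpha_i^k \|d^k\|_{B_k}^2 \geq t_k \sigma \alpha_{\min} a \|d^k\|^2.
\end{equation*}
Using the lower bound $t_k \geq \min\{1, t_{\min}\}$ from the stepsize proposition — note $t_{\min}$ is bounded away from zero since, on the compact level set, the relative smoothness and strong convexity constants $L_i^k, \mu_i^k$ can be taken uniform in $k$ — the left side going to zero forces a subsequence $\{d^k\}_{k\in\mathcal{K}} \to 0$. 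The one technical point to handle carefully here is the uniformity of $L_i^k$ and $\mu_i^k$: I would argue these are controlled because $B_k$ lies between $aI$ and $bI$ (Assumption \ref{a1}) and the Hessians $\nabla^2 F_i$ are continuous hence bounded on the compact set $\mathcal{L}_F(x^0)$, so $t_{\min}$ admits a uniform positive lower bound.

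Finally, I would pass to a further subsequence so that $x^k \stackrel{\mathcal{K}}{\longrightarrow} x^*$ (possible by compactness) while retaining $d^k \stackrel{\mathcal{K}}{\longrightarrow} 0$, and then invoke Lemma \ref{lem1}(ii) directly: this gives $d_{SD}(x^*) = 0$, i.e. $x^*$ is Pareto critical. Since the accumulation point was arbitrary — the argument above applies to any accumulation point by choosing $\mathcal{K}$ appropriately — every accumulation point of $\{x^k\}$ is Pareto critical. The main obstacle I anticipate is not any single deep step but rather the bookkeeping around uniform constants: making sure the stepsize lower bound does not degenerate along the sequence, which is exactly where Assumptions \ref{a1} and \ref{a2} are used in tandem. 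The inequality (\ref{ew13}) from Lemma \ref{lem1} is what ultimately lets me avoid needing continuity of the variable-metric direction map $x \mapsto d(x)$ (which may fail because $\alpha^k$ and $B_k$ depend on history), replacing it with the cleaner route through $d_{SD}$.
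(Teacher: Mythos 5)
Your overall architecture -- monotone decrease keeps the iterates in the compact level set $\mathcal{L}_F(x^0)$, telescoping forces the per-iteration decrease to vanish, the Armijo test together with (\ref{Ediff}) converts that decrease into control of $\|d^k\|^2$, and Lemma \ref{lem1}(ii) turns $d^k\stackrel{\mathcal{K}}{\to}0$ into criticality of the limit -- is exactly the intended route; the paper itself disposes of Theorem \ref{t1} in one line by pointing to the analogous argument in \cite{CLY2023}, and your observation that Lemma \ref{lem1}(ii) substitutes for continuity of the direction map is precisely the remark the authors make after that lemma.

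The gap is in the step where you bound $t_k$ away from zero. Theorem \ref{t1} assumes only Assumptions \ref{a1} and \ref{a2} together with $F\in C^1$; it does not assume relative smoothness or strong convexity. The stepsize bound you invoke, $t_k\geq\min\{1,t_{\min}\}$ with $t_{\min}=\min_i 2\gamma(1-\sigma)\mu_i^k/L_i^k$, is proved under $\mu_i^k$-strong convexity relative to $\|\cdot\|_{B_k}$; for a nonconvex $F_i$ there is no positive $\mu_i^k$ and the bound is vacuous. Your fallback -- Hessians continuous, hence bounded on the compact level set -- imports twice differentiability, which the paper does not assume, and even $C^1$ on a compact set does not yield a Lipschitz gradient. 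The standard repair, and what the cited proof does, is a case split on the subsequence converging to a given accumulation point $x^*$: if $\limsup t_k>0$ along it, your argument closes as written; if $t_k\to 0$, one uses the fact that the Armijo test \emph{failed} at the trial step $t_k/\gamma$ for some index $i$, divides the failed inequality by $t_k/\gamma$, extracts a convergent subsequence of the bounded directions $d^k\to\bar{d}$, and passes to the limit using only differentiability to obtain $\langle\nabla F_i(x^*),\bar{d}\rangle\geq\sigma\langle\nabla F_i(x^*),\bar{d}\rangle$, hence $\langle\nabla F_i(x^*),\bar{d}\rangle\geq 0$; combined with the uniform descent inequality $\langle\nabla F_i(x^k),d^k\rangle\leq-\alpha_{\min}a\|d^k\|^2$ this forces $\bar{d}=0$, and then (\ref{ew13}) gives $d_{SD}(x^*)=0$ as before. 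Without this second branch your proof covers only the relatively smooth and strongly convex case, which is the setting of Theorem \ref{t3} rather than of Theorem \ref{t1}.
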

\begin{proof}
	Note that $\alpha_{\min}\leq\alpha^{k}_{i}\leq\alpha_{\max}$, the assertion can be obtained by using the similar arguments as in the proof of \cite[Theorem 1]{CLY2023}.
\end{proof}
\subsection{Strong convergence}
The strong convergence of gradient descent methods for MOPs is typically analyzed under the convexity assumption. However, as described in \cite{CLY2023}, it is possible to establish the strong convergence property using a second-order sufficient condition, which can hold even without convexity.
\begin{assumption}\label{a3}
	The sequence $\{x^{k}\}$ generated by Algorithm \ref{bbvm} possesses an accumulation point $x^{*}$ and there exists $\lambda^{*}\in\Delta_{m}$ such that
	$$\sum\limits_{i=1}^{m}\lambda^{*}_{i}\nabla F_{i}(x^{*})=0,$$
	and
	$$\dual{d,\left(\sum\limits_{i=1}^{m}\lambda^{*}_{i}\nabla^{2}F_{i}(x^{*})\right)d}>0, \ \ \forall d\in\Omega,$$ where $\Omega:=\{d\neq0: \dual{\nabla F_{i}(x^{*}),d}=0,\ i\in[m]\}$.
\end{assumption}
\begin{theorem}\rm\label{t2}
	Suppose that Assumptions \ref{a1}, \ref{a2} and \ref{a3} hold. Let $\{x^{k}\}$ be the sequence generated by Algorithm \ref{bbvm}. Then $\lim\limits_{k\rightarrow\infty}x^{k}=x^{*}$.  
\end{theorem}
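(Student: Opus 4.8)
The plan is to show that the accumulation point $x^{*}$ from Assumption \ref{a3} is in fact the limit of the whole sequence, by combining three ingredients: (1) the global convergence guarantee from Theorem \ref{t1}, which already tells us every accumulation point is Pareto critical and that $d^{k}\to 0$ along a subsequence; (2) the second-order sufficient condition, which forces $x^{*}$ to be a strict local Pareto optimum and in particular an isolated Pareto critical point; and (3) a summability/Fejér-type argument ensuring the iterates cannot escape a small neighborhood of $x^{*}$ once they enter it. This is the same architecture used in \cite[Theorem 2]{CLY2023}, so I would first try to reduce to that result by checking its hypotheses hold here.

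First I would establish that the sequence of function values $\{F(x^{k})\}$ is monotonically decreasing (componentwise) by the Armijo condition in Algorithm \ref{alg1}, and since $\{x^{k}\}\subseteq\mathcal{L}_{F}(x^{0})$ which is compact by Assumption \ref{a2}, the sequence has accumulation points; by hypothesis $x^{*}$ is one of them. Next, using the sufficient decrease inherited from the line search together with the stepsize lower bound $t_{k}\geq\min\{1,t_{\min}\}$ and the identity $\langle\nabla F_{i}(x^{k}),d^{k}\rangle = -\alpha_{i}^{k}\|d^{k}\|_{B_{k}}^{2}$ for active indices (equation (\ref{Ediff})), one gets a bound of the form
\begin{equation*}
	F_{i}(x^{k}) - F_{i}(x^{k+1}) \geq \sigma\, t_{k}\,\alpha_{\min}\,a\,\|d^{k}\|^{2},
\end{equation*}
which, summed over $k$, shows $\sum_{k}\|d^{k}\|^{2}<\infty$ and hence $d^{k}\to 0$ (not merely along a subsequence) and $\|x^{k+1}-x^{k}\|\to 0$. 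The latter "asymptotic regularity" is the key bridge: combined with Assumption \ref{a3} it will let us upgrade subsequential convergence to full convergence.

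Then I would argue that $x^{*}$ is an isolated Pareto critical point. From Assumption \ref{a3}, the KKT system $\sum_{i}\lambda_{i}^{*}\nabla F_{i}(x^{*})=0$ together with positive definiteness of $\sum_{i}\lambda_{i}^{*}\nabla^{2}F_{i}(x^{*})$ on the subspace $\Omega$ is exactly a second-order sufficient condition; a standard perturbation argument (or direct appeal to \cite{CLY2023}) shows there is a neighborhood $U$ of $x^{*}$ containing no other Pareto critical point and on which $x^{*}$ is the unique Pareto critical point. Now fix any $\varepsilon>0$ with $B(x^{*},\varepsilon)\subseteq U$. Since $x^{*}$ is an accumulation point, infinitely many iterates lie in $B(x^{*},\varepsilon/2)$; since $\|x^{k+1}-x^{k}\|\to 0$, once $x^{k}\in B(x^{*},\varepsilon/2)$ with $k$ large the step is too short to leave $B(x^{*},\varepsilon)$ in one move, and because the only accumulation point inside the closed ball is $x^{*}$ itself (any other accumulation point would be Pareto critical by Theorem \ref{t1}, contradicting uniqueness in $U$), the tail of the sequence is trapped: $x^{k}\in B(x^{*},\varepsilon)$ for all large $k$. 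As $\varepsilon$ was arbitrary, $x^{k}\to x^{*}$.

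The main obstacle I anticipate is the trapping step — rigorously ruling out that the sequence oscillates out of and back into the neighborhood infinitely often. The summability $\sum\|d^{k}\|^{2}<\infty$ gives $\|x^{k+1}-x^{k}\|\to 0$ but not $\sum\|x^{k+1}-x^{k}\|<\infty$, so one cannot immediately conclude the iterates form a Cauchy sequence; instead one must use the isolatedness of $x^{*}$ (no competing accumulation points in $U$) to close the argument, which is why Assumption \ref{a3} is essential and why I would lean on the corresponding lemma in \cite{CLY2023} rather than re-deriving it. A secondary technical point is verifying that $\alpha_{i}^{k}$ staying in $[\alpha_{\min},\alpha_{\max}]$ and Assumption \ref{a1} together give the uniform constants needed in the sufficient-decrease inequality above; this is routine given the bounds already recorded in Lemma \ref{lem1} and the stepsize proposition.
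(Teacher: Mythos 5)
Your overall architecture (monotone componentwise decrease, asymptotic regularity $\|x^{k+1}-x^{k}\|\to 0$, then a localization argument around $x^{*}$) is the right one, and it is indeed the skeleton of the proof the paper defers to \cite[Theorem 2]{CLY2023}. But there is a genuine gap in the localization step: Assumption \ref{a3} does \emph{not} make $x^{*}$ an isolated Pareto critical point, and your trapping argument leans on exactly that. Consider $F_{1}(x)=\|x\|^{2}$ and $F_{2}(x)=\|x-e_{1}\|^{2}$ on $\mathbb{R}^{2}$: every point of the segment $[0,e_{1}]$ is Pareto critical, yet at $x^{*}=(1/2,0)$ with $\lambda^{*}=(1/2,1/2)$ one has $\sum_{i}\lambda_{i}^{*}\nabla F_{i}(x^{*})=0$, $\Omega$ is the vertical axis minus the origin, and $\sum_{i}\lambda_{i}^{*}\nabla^{2}F_{i}(x^{*})=2I\succ0$ there, so Assumption \ref{a3} holds while no neighborhood $U$ free of other critical points exists. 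Consequently the sentence ``any other accumulation point would be Pareto critical, contradicting uniqueness in $U$'' does not close the argument. The correct replacement is to use the \emph{values}, not the critical points: since $F(x^{k})$ is componentwise decreasing and converges along a subsequence to $F(x^{*})$, the whole sequence satisfies $F(x^{k})\downarrow F(x^{*})$, so every accumulation point $y$ has $F(y)=F(x^{*})$; Assumption \ref{a3} is a second-order sufficient condition for \emph{strict local efficiency}, i.e.\ no $x\neq x^{*}$ near $x^{*}$ satisfies $F(x)\preceq F(x^{*})$, hence $x^{*}$ is isolated in the accumulation set; asymptotic regularity makes the accumulation set connected, so it equals $\{x^{*}\}$.

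Two secondary points. First, your summability claim $\sum_{k}\|d^{k}\|^{2}<\infty$ needs a uniform lower bound on $t_{k}$, which under Assumptions \ref{a1}--\ref{a3} alone (only $C^{1}$ objectives globally) is not available; what telescoping actually gives is $\sum_{k}t_{k}\|d^{k}\|_{B_{k}}^{2}<\infty$, and since $t_{k}\leq1$ this already yields $\|x^{k+1}-x^{k}\|^{2}\leq t_{k}\|d^{k}\|^{2}\to0$, which is all the localization step needs --- so this part is repairable without extra hypotheses. Second, the inequality $\langle\nabla F_{i}(x^{k}),d^{k}\rangle\leq-\alpha_{i}^{k}\|d^{k}\|_{B_{k}}^{2}$ holds for \emph{all} $i$ (it is the feasibility constraint of the subproblem, with equality only for active indices), so the sufficient-decrease bound you want is available for every component, as required for componentwise telescoping.
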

\begin{proof}
	 The proof follows a similar approach as in \cite[Theorem 2]{CLY2023}, we omit it here.
\end{proof}

\subsection{Linear convergence}
Before presenting the linear convergence of BBDMO\_VM, we introduce two types of merit functions for (\ref{MOP}) that quantify the gap between the current point and the optimal solution. 
\begin{equation}\label{u}
	u_{0}^{\alpha}(x):=\sup\limits_{y\in\mathbb{R}^{n}}\min\limits_{i\in[m]}\left\{\frac{F_{i}(x)-F_{i}(y)}{\alpha_{i}}\right\},
\end{equation}

\begin{equation}\label{v}
	w_{\ell}^{\alpha}(x):=\max\limits_{y\in\mathbb{R}^{n}}\min\limits_{i\in[m]}\left\{\frac{
		\left\langle\nabla F_{i}(x),x-y\right\rangle}{\alpha_{i}}-\frac{\ell}{2}\|x-y\|_{B(x)}^{2}\right\},
\end{equation}
where $\alpha\in\mathbb{R}^{m}_{++}$, $\ell>0$.

\par We can demonstrate that $u_{0}^{\alpha}$ and $w_{\ell}^{\alpha}$ serve as merit functions, satisfying the criteria of weak Pareto and critical point, respectively.
\vspace{2mm}
\begin{proposition}
	Suppose that $B(x)$ is positive definite for $x\in\mathbb{R}^{n}$. Let $u_{0}^{\alpha}$ and $w_{\ell}^{\alpha}$ be defined as {\rm(\ref{u})} and {\rm(\ref{v})}, respectively. Then, the following statements hold.
	\begin{itemize}
		\item[$\mathrm{(i)}$]  $x\in\mathbb{R}^{n}$ is a weak Pareto solution of {\rm(\ref{MOP})} if and only if $u_{0}^{\alpha}(x)=0$.
		\item[$\mathrm{(ii)}$]  $x\in\mathbb{R}^{n}$ is a Pareto critical point of {\rm(\ref{MOP})} if and only if $w_{\ell}^{\alpha}(x)=0$.
	\end{itemize}
\end{proposition}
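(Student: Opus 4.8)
The plan is to read off both equivalences from the two elementary bounds $u_0^\alpha(x)\ge 0$ and $w_\ell^\alpha(x)\ge 0$ — each obtained by evaluating the inner optimization at $y=x$ — together with a description of exactly when strict inequality occurs. No convexity of $F$ is needed; the role of $\alpha\in\mathbb{R}^m_{++}$ is precisely to let the signs of the rescaled quantities reflect the componentwise order $\preceq$ on $\mathbb{R}^m$.

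\emph{Statement (i).} Putting $y=x$ in (\ref{u}) gives $\min_{i\in[m]}\{0\}=0$, hence $u_0^\alpha(x)\ge 0$ for every $x$; it then suffices to show $u_0^\alpha(x)>0$ if and only if $x$ is not a weak Pareto solution. If $x$ is not weakly Pareto, Definition \ref{def2} supplies $y$ with $F_i(y)<F_i(x)$ for all $i$; dividing by $\alpha_i>0$ yields $(F_i(x)-F_i(y))/\alpha_i>0$ for all $i$, so the inner minimum is positive and $u_0^\alpha(x)>0$. Conversely, if $u_0^\alpha(x)>0$, some $y$ satisfies $\min_{i\in[m]}\{(F_i(x)-F_i(y))/\alpha_i\}>0$, whence $F_i(x)-F_i(y)>0$ for all $i$, i.e. $F(y)\prec F(x)$, contradicting weak Pareto optimality. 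Combined with $u_0^\alpha(x)\ge 0$, this gives $u_0^\alpha(x)=0$ iff $x$ is weakly Pareto.

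\emph{Statement (ii).} The idea is to identify $w_\ell^\alpha(x)$ with (minus) the optimal value of a direction-finding subproblem of the type (\ref{d}). Substituting $d:=x-y$ in (\ref{v}) and pulling the $i$-independent quadratic term out of the inner minimum gives
\[
w_\ell^\alpha(x)=\max_{d\in\mathbb{R}^n}\left\{\min_{i\in[m]}\frac{\langle\nabla F_i(x),d\rangle}{\alpha_i}-\frac{\ell}{2}\|d\|_{B(x)}^2\right\}=-\min_{d\in\mathbb{R}^n}\max_{i\in[m]}\left\{\frac{\langle\nabla F_i(x),d\rangle}{\alpha_i}+\frac{\ell}{2}\|d\|_{B(x)}^2\right\},
\]
where the last equality uses $\max_i(-a_i)=-\min_i a_i$ and the change of variable $d\mapsto-d$; denote the right-hand minimum by $\theta_\ell^\alpha(x)$. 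Taking $d=0$ shows $\theta_\ell^\alpha(x)\le 0$, i.e. $w_\ell^\alpha(x)\ge 0$, with equality iff $d=0$ solves that subproblem. If $x$ is not Pareto critical, Definition \ref{def3} yields $\bar d$ with $\langle\nabla F_i(x),\bar d\rangle<0$ for all $i$, so $\delta:=-\max_{i\in[m]}\langle\nabla F_i(x),\bar d\rangle/\alpha_i>0$; evaluating the subproblem objective at $t\bar d$ gives $-t\delta+\frac{\ell}{2}t^2\|\bar d\|_{B(x)}^2<0$ for small $t>0$, hence $\theta_\ell^\alpha(x)<0$ and $w_\ell^\alpha(x)>0$. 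If $x$ is Pareto critical, then for every $d$ there is an index $j$ with $\langle\nabla F_j(x),d\rangle\ge 0$ (otherwise $JF(x)d\in-\mathbb{R}^m_{++}$, contradicting Definition \ref{def3}), so $\max_{i\in[m]}\langle\nabla F_i(x),d\rangle/\alpha_i\ge 0$ using $\alpha_j>0$; thus the subproblem objective is nonnegative everywhere and vanishes at $d=0$, giving $\theta_\ell^\alpha(x)=0$ and $w_\ell^\alpha(x)=0$.

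\emph{Expected obstacle.} There is no deep difficulty; the only points needing care are the sign bookkeeping in the min/max dualization of (\ref{v}) and the consistent use of $\alpha_i>0$ to pass between the scaled quantities and the order on $\mathbb{R}^m$. One should also note that the ``$\max$'' in (\ref{v}) is legitimate because the inner objective is strongly concave in $y$ (the term $-\frac{\ell}{2}\|x-y\|_{B(x)}^2$ with $B(x)\succ 0$), so the supremum is attained; for (\ref{u}) only the bound $u_0^\alpha(x)\ge 0$ and the strict-positivity dichotomy are used, so attainment of the supremum is not required.
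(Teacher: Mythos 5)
Your proof is correct, and it is essentially the same elementary argument the paper relies on: the paper does not write out a proof but simply invokes the arguments of Theorems 3.1 and 3.9 of the cited reference \cite{TFY2020}, which are exactly the ``evaluate at $y=x$ to get nonnegativity, then characterize strict positivity'' reasoning you give, adapted here to the $\alpha$-scaling and the $B(x)$-metric. Your explicit rewriting of $w_\ell^\alpha(x)$ as minus the optimal value of a direction-finding subproblem and the attainment remark are sound and fill in precisely the details the paper leaves to the reference.
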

\begin{proof}
	The assertion (i) and (ii) can be obtained by using the same arguments as in the proofs of \cite[Theorem 3.1]{TFY2020} and \cite[Theorem 3.9]{TFY2020}, respectively.
\end{proof}
\par We are now in the position to present the linear convergence of BBDMO\_VM.
\begin{theorem}\label{t3}
	Assume that $F_{i}$ is $L_{i}^{k}$-smooth and $\mu_{i}^{k}$-strongly convex ($\mu_{i}^{k}\geq\delta_{i}>0$) relative to $\|\cdot\|_{B_{k}}$ for $i\in[m]$, and Assumption \ref{a1} holds. Let $\{x^{k}\}$ be the sequence generated by Algorithm \ref{bbvm}. Then, the following statements hold.
\begin{itemize}
	\item[$\mathrm{(i)}$] $\{x^{k}\}$ converges to some Pareto solution $x^{*}$.
	\item[$\mathrm{(ii)}$] $u^{\delta}_{0}(x^{k+1})\leq\left(1-2\gamma\sigma(1-\sigma) \min\limits_{i\in[m]}\left\{\frac{\mu^{k}_{i}\delta_{i}^{2}}{(L^{k}_{i})^{3}}\right\}\right)u^{\delta}_{0}(x^{k}).$
\end{itemize}
\end{theorem}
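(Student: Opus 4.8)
The plan is to establish part (ii) first, since its geometric contraction also yields part (i). The first step toward (ii) is a uniform per-objective sufficient-decrease estimate. The Armijo rule of Algorithm~\ref{alg1} gives $F_{i}(x^{k+1})-F_{i}(x^{k})\le\sigma t_{k}\langle\nabla F_{i}(x^{k}),d^{k}\rangle$ for every $i\in[m]$, while the duality description of subproblem~(\ref{d}) gives $\langle\nabla F_{i}(x^{k}),d^{k}\rangle\le-\alpha_{i}^{k}\|d^{k}\|_{B_{k}}^{2}$ for all $i$ (with equality when $\lambda_{i}^{k}>0$, as in~(\ref{Ediff})). Using $\mu_{i}^{k}\le\alpha_{i}^{k}$ (established in the proof of the stepsize bound for $d^{k}$) and $\delta_{i}\le\mu_{i}^{k}$, these combine to $\frac{F_{i}(x^{k+1})-F_{i}(x^{k})}{\delta_{i}}\le-\sigma t_{k}\|d^{k}\|_{B_{k}}^{2}$ for all $i$. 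Feeding this into definition~(\ref{u}) of $u_{0}^{\delta}$ — evaluating the inner minimum at an index that is worst for $x^{k}$, which still enjoys this decrease — yields $u_{0}^{\delta}(x^{k+1})\le u_{0}^{\delta}(x^{k})-\sigma t_{k}\|d^{k}\|_{B_{k}}^{2}$.

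The core of the proof is a lower bound on $\sigma t_{k}\|d^{k}\|_{B_{k}}^{2}$ in terms of $u_{0}^{\delta}(x^{k})$. I would first relate $u_{0}^{\delta}$ to the merit function $w_{1}^{\delta}$ of~(\ref{v}): relative $\mu_{i}^{k}$-strong convexity together with $\mu_{i}^{k}\ge\delta_{i}$ gives $u_{0}^{\delta}(x^{k})\le w_{1}^{\delta}(x^{k})$, and strong duality gives $w_{1}^{\delta}(x^{k})=\frac{1}{2}\|\bar d^{k}\|_{B_{k}}^{2}$, where $\bar d^{k}$ solves the variant of~(\ref{d}) with $\delta_{i}$ replacing $\alpha_{i}^{k}$. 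Since $\delta_{i}\le\alpha_{i}^{k}\le L_{i}^{k}$, comparing the max--min values of the two subproblems by inserting a scalar multiple of $\bar d^{k}$ into the reformulation $\frac{1}{2}\|d^{k}\|_{B_{k}}^{2}=\max_{d}\min_{i}\{\langle\nabla F_{i}(x^{k}),-d\rangle/\alpha_{i}^{k}-\frac{1}{2}\|d\|_{B_{k}}^{2}\}$ and optimizing the scalar yields $\|d^{k}\|_{B_{k}}^{2}\ge 2\bigl(\min_{i\in[m]}\delta_{i}/L_{i}^{k}\bigr)^{2}u_{0}^{\delta}(x^{k})$. For the stepsize I would use the bound $t_{k}\ge\min\{1,\min_{i\in[m]}2\gamma(1-\sigma)\mu_{i}^{k}/L_{i}^{k}\}$ already proved for $d^{k}$, treating the case $t_{k}=1$ separately (there the claimed rate is immediate from $\mu_{i}^{k}/L_{i}^{k}\le1$) and, when backtracking occurs, using that relative $L_{i}^{k}$-smoothness forces $t_{k}>2\gamma(1-\sigma)\mu_{j}^{k}/L_{j}^{k}$ for the backtracking index $j$. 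Multiplying the one-step inequality by the direction and stepsize bounds, with care about which index controls each factor, gives $u_{0}^{\delta}(x^{k+1})\le\bigl(1-2\gamma\sigma(1-\sigma)\min_{i\in[m]}\frac{\mu_{i}^{k}\delta_{i}^{2}}{(L_{i}^{k})^{3}}\bigr)u_{0}^{\delta}(x^{k})$.

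For part (i), $\mu_{i}^{k}$-strong convexity relative to $\|\cdot\|_{B_{k}}$ with $\mu_{i}^{k}\ge\delta_{i}>0$ and $aI\preceq B_{k}$ (Assumption~\ref{a1}) make each $F_{i}$ strongly convex in the Euclidean sense with modulus at least $a\delta_{i}$, hence coercive; therefore $\mathcal{L}_{F}(x^{0})$ is compact (so Assumption~\ref{a2} is in force), $\{x^{k}\}\subset\mathcal{L}_{F}(x^{0})$, and Theorem~\ref{t1} shows every accumulation point is Pareto critical, hence Pareto optimal by the strict-convexity statement of the optimality lemma in Section~\ref{sec2}. To upgrade to convergence of the whole sequence I would use (ii): since $\mu_{i}^{k}\ge\delta_{i}$ and, for globally smooth $F_{i}$, the $L_{i}^{k}$ are bounded, the contraction factors stay below $1$ by a fixed amount, so $u_{0}^{\delta}(x^{k})$ — and hence $t_{k}\|d^{k}\|_{B_{k}}^{2}\le\sigma^{-1}u_{0}^{\delta}(x^{k})$ — decays geometrically and $\sum_{k}\sqrt{t_{k}\|d^{k}\|_{B_{k}}^{2}}<\infty$. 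With $t_{k}\le1$ this gives $\sum_{k}\|x^{k+1}-x^{k}\|_{B_{k}}=\sum_{k}t_{k}\|d^{k}\|_{B_{k}}<\infty$, and $aI\preceq B_{k}$ gives $\sum_{k}\|x^{k+1}-x^{k}\|<\infty$; so $\{x^{k}\}$ is Cauchy and converges to its unique accumulation point, a Pareto solution.

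The main obstacle should be the middle of (ii): turning the comparison between the variable-metric Barzilai--Borwein subproblem and its $\delta$-scaled counterpart into a clean lower bound for $\|d^{k}\|_{B_{k}}^{2}$, and then bookkeeping the condition-number constants so that the minimum from the stepsize bound and the minimum from the direction bound combine into the single quantity $\min_{i}\mu_{i}^{k}\delta_{i}^{2}/(L_{i}^{k})^{3}$ rather than a product of two separate minima; the chain $\delta_{i}\le\mu_{i}^{k}\le\alpha_{i}^{k}\le L_{i}^{k}$ is exactly what prevents these estimates from losing powers.
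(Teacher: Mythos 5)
Your part (ii) is essentially the paper's own argument. The paper likewise starts from the Armijo condition, passes to $\frac{F_{i}(x^{k+1})-F_{i}(x^{k})}{\alpha^{k}_{i}}\leq -t_{k}\sigma w_{1}^{\alpha^{k}}(x^{k})$ using that $d^{k}$ attains $w_{1}^{\alpha^{k}}(x^{k})=\tfrac{1}{2}\|d^{k}\|_{B_{k}}^{2}$, and then proves $u_{0}^{\delta}(x^{k})\leq (r^{k})^{2}w_{1}^{\alpha^{k}}(x^{k})$ with $r^{k}=\max_{i}\alpha^{k}_{i}/\delta_{i}$ via the change of variables $y\mapsto (x^{k}-y)/r^{k}$ inside the supremum --- which is exactly your ``insert a scalar multiple of $\bar d^{k}$ and optimize the scalar'' comparison, merely carried out on the $\alpha^{k}$-weighted subproblem rather than the $\delta$-weighted one; your per-objective decrease $\frac{F_{i}(x^{k+1})-F_{i}(x^{k})}{\delta_{i}}\leq-\sigma t_{k}\|d^{k}\|_{B_{k}}^{2}$ is in fact a factor of two sharper. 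The step you flag as the main obstacle is the right one to worry about, but the paper does not resolve it either: it simply asserts $\min_{i}\{\mu^{k}_{i}/L^{k}_{i}\}\,(r^{k})^{-2}\geq\min_{i}\{\mu^{k}_{i}\delta_{i}^{2}/(L^{k}_{i})^{3}\}$, and since a product of minima is in general \emph{at most} the minimum of the products, that inequality requires justification (or the rate should be stated with the product of the two separate minima); so on this point you are exactly where the source is. Where you genuinely diverge is part (i): the paper observes that relative strong convexity plus Assumption \ref{a1} forces Assumptions \ref{a2} and \ref{a3} and then invokes Theorem \ref{t2}, whereas you bootstrap from (ii) to $\sum_{k}\|x^{k+1}-x^{k}\|<\infty$ and conclude that $\{x^{k}\}$ is Cauchy. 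Your route is more self-contained, but it needs the contraction factor bounded away from $1$ uniformly in $k$, i.e.\ $\sup_{k}L^{k}_{i}<\infty$, which is not literally among the hypotheses (it does follow from Euclidean smoothness together with $aI\preceq B_{k}$); the paper's detour through Theorem \ref{t2} avoids that extra requirement at the price of importing the second-order machinery behind Assumption \ref{a3}.
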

\begin{proof}
	(i) From the relative $\mu^{k}_{i}$-strong convexity of $F_{i}$ and uniformly positive definiteness of $B_{k}$, we can conclude that $F_{i}$ is strongly convex for $i\in[m]$. Consequently, Assumptions \ref{a2} and \ref{a3} hold. Then, the assertion (i) is a consequence of Theorem \ref{t2}.
	\par(ii) We use the relative $L_{i}^{k}$-smoothness and $\mu^{k}_{i}$-strong convexity of $F_{i}$ to get 
	$$\mu^{k}_{i}\leq\alpha^{k}_{i}\leq L^{k}_{i},~i\in[m].$$
	The line search holds that
	$$F_{i}(x^{k+1})-F_{i}(x^{k})\leq t_{k}\sigma\dual{\nabla F_{i}(x^{k}),d^{k}}.$$
	By direct calculation, we have
		\begin{equation}\label{E22}
		\begin{aligned}
			\frac{F_{i}(x^{k+1})-F_{i}(x^{k})}{\alpha^{k}_{i}}
			&\leq t_{k}\sigma\left(\frac{\left\langle\nabla F_{i}(x^{k}),d^{k}\right\rangle}{\alpha^{k}_{i}}+\frac{1}{2}\|d^{k}\|_{B_{k}}^{2}\right)\\
			&\leq t_{k}\sigma\max\limits_{i\in[m]}\left\{\frac{\left\langle\nabla F_{i}(x^{k}),d^{k}\right\rangle}{\alpha^{k}_{i}}+\frac{1}{2}\|d^{k}\|_{B_{k}}^{2}\right\}\\
			&=-t_{k}\sigma w_{1}^{\alpha^{k}}(x^{k})\\
			&\leq -2\gamma\sigma(1-\sigma) \min\limits_{i\in[m]}\left\{\frac{\mu^{k}_{i}}{L^{k}_{i}}\right\}w_{1}^{\alpha^{k}}(x^{k}),
		\end{aligned}
	\end{equation}
where the last inequality comes form the lower bound of $t_{k}$.
On the other hand, from the relative $\mu^{k}_{i}$-strong convexity of $F_{i}$, we have
\begin{equation}\label{E23}
	\begin{aligned}
		\sup\limits_{x\in\mathbb{R}^{n}}\min\limits_{i\in[m]}\left\{\frac{F_{i}(x^{k})-F_{i}(x)}{\delta_{i}}\right\}
		&\leq\sup\limits_{x\in\mathbb{R}^{n}}\min\limits_{i\in[m]}\left\{\frac{\dual{\nabla F_{i}(x^{k}),x^{k}-x}}{\delta_{i}}-\frac{\mu^{k}_{i}}{2\delta_{i}}\nm{x^{k}-x}^{2}_{B_{k}}\right\}\\
		&\leq\sup\limits_{x\in\mathbb{R}^{n}}\min\limits_{i\in[m]}\left\{\frac{\dual{\nabla F_{i}(x^{k}),x^{k}-x}}{\delta_{i}}-\frac{1}{2}\nm{x^{k}-x}^{2}_{B_{k}}\right\}\\
			&\leq\sup\limits_{x\in\mathbb{R}^{n}}\min\limits_{i\in[m]}\left\{r^{k}\frac{\dual{\nabla F_{i}(x^{k}),x^{k}-x}}{\alpha^{k}_{i}}-\frac{1}{2}\nm{x^{k}-x}^{2}_{B_{k}}\right\}\\
		&=(r^{k})^2\sup\limits_{x\in\mathbb{R}^{n}}\min\limits_{i\in[m]}\left\{\frac{\dual{\nabla F_{i}(x^{k}),\frac{x^{k}-x}{r^{k}}}}{\alpha^{k}_{i}}-\frac{1}{2}\nm{\frac{x^{k}-x}{r^{k}}}^{2}_{B_{k}}\right\}\\
		&=(r^{k})^2w_{1}^{\alpha^{k}}(x^{k}),
	\end{aligned}
\end{equation}

where the second inequality follows by $\mu^{k}_{i}\geq\delta_{i}$, and $r^{k}:=\max\limits_{i\in[m]}\left\{\frac{\alpha^{k}_{i}}{\delta_{i}}\right\}$. We use (\ref{E22}) and (\ref{E23}) to get
\begin{equation}
	\begin{aligned}
		\frac{F_{i}(x^{k+1})-F_{i}(x^{k})}{\delta_{i}}&\leq\frac{F_{i}(x^{k+1})-F_{i}(x^{k})}{\alpha^{k}_{i}}\\
		&\leq -2\gamma\sigma(1-\sigma) \min\limits_{i\in[m]}\left\{\frac{\mu^{k}_{i}}{L^{k}_{i}}\right\}\frac{1}{(r^{k})^2}u^{\delta}_{0}(x^{k})\\
		&\leq -2\gamma\sigma(1-\sigma) \min\limits_{i\in[m]}\left\{\frac{\mu^{k}_{i}\delta_{i}^{2}}{(L^{k}_{i})^{3}}\right\}u^{\delta}_{0}(x^{k}).
	\end{aligned}
\end{equation}
Consequently, for all $x\in\mathbb{R}^{n}$, we have
$$\frac{F_{i}(x^{k+1})-F_{i}(x)}{\delta_{i}}\leq\frac{F_{i}(x^{k+1})-F_{i}(x)}{\delta_{i}}-2\gamma\sigma(1-\sigma) \min\limits_{i\in[m]}\left\{\frac{\mu^{k}_{i}\delta_{i}^{2}}{(L^{k}_{i})^{3}}\right\}u^{\delta}_{0}(x^{k}).$$
Taking the supremum and minimum with respect to $x$ and $i\in[m]$ on both sides, respectively, we conclude that
$$u^{\delta}_{0}(x^{k+1})\leq\left(1-2\gamma\sigma(1-\sigma) \min\limits_{i\in[m]}\left\{\frac{\mu^{k}_{i}\delta_{i}^{2}}{(L^{k}_{i})^{3}}\right\}\right)u^{\delta}_{0}(x^{k}).$$
This completes the proof.
\end{proof}
\begin{remark}
	The BBDMO\_VM achieves rapid convergence provided all objectives are well-conditioned relative to $\|\cdot\|_{B_{k}}$. In practical applications, it is vital to choose an appropriate $B_{k}$ in each iteration to more effectively capture the local curvature of the problem.
\end{remark}

As described in \cite{CTY2023}, linear objectives often introduce substantial imbalances into problems, which significantly decelerate the convergence of SDMO (VMMO). In what follows, we will investigate the convergence property of BBDMO\_VM for problems with some linear objectives. Before presenting the convergence result, we give the following preliminaries related to stepsize and dual variables.
\begin{proposition}\label{p2}
	Assume that $F_{i}$ is linear for $i\in\mathcal{L}$, $L_{i}^{k}$-smooth ($L_{i}\geq L^{k}_{i}$) and $\mu_{i}^{k}$-strongly convex ($\mu_{i}^{k}\geq\delta_{i}>0$) relative to $\|\cdot\|_{B_{k}}$ for $i\in[m]\setminus\mathcal{L}$, respectively, and Assumption \ref{a1} holds. Let $\{x^{k}\}$ be the sequence generated by Algorithm \ref{bbvm}. Then, the following statements hold.
	\begin{itemize}
		\item[$\mathrm{(i)}$] The stepsize has the following lower bound:
		\begin{equation}\label{tk}
			t_{k}\geq2\gamma(1-\sigma)\min\limits_{i\in[m]\setminus\mathcal{L}}\left\{\frac{\mu_{i}^{k}}{L_{i}^{k}}\right\}.
		\end{equation}
		\item[$\mathrm{(ii)}$] The sum of dual variables for linear objectives has the the following upper bound: 
		\begin{equation}\label{lamk}
			\sum\limits_{i\in\mathcal{L}}\lambda^{k}_{i}\leq c\alpha_{\min},
		\end{equation}
	where $c:=(1+\frac{b}{a})\max\limits_{i\in[m]\setminus\mathcal{L}}\frac{\nm{\nabla F_{i}(x^{0})}+bL_{i}R}{\epsilon\delta_{i}}$. 
	\end{itemize}
\end{proposition}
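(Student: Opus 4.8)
The plan is to treat the two parts separately, in each case reusing the stepsize and duality analysis already developed for $d^{k}$ and isolating precisely what the linear objectives contribute.

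For (i), the point is that a linear objective can never cause a backtracking step. For $i\in\mathcal{L}$ one has $F_{i}(x^{k}+td^{k})-F_{i}(x^{k})=t\langle\nabla F_{i}(x^{k}),d^{k}\rangle$ exactly, and since $\langle\nabla F_{i}(x^{k}),d^{k}\rangle\le-\alpha^{k}_{i}\|d^{k}\|^{2}_{B_{k}}\le 0$ holds for every $i$ — the analogue of (\ref{E3.10}), obtained from the fact that the optimal value of subproblem (\ref{d}) equals $-\frac{1}{2}\|d^{k}\|^{2}_{B_{k}}$ — together with $\sigma\in(0,1)$, the Armijo inequality $F_{i}(x^{k}+td^{k})-F_{i}(x^{k})\le t\sigma\langle\nabla F_{i}(x^{k}),d^{k}\rangle$ is satisfied at every $t\in(0,1]$. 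Hence whenever the line search rejects a trial stepsize, the rejecting index lies in $[m]\setminus\mathcal{L}$; running the backtracking estimate from the proof of Proposition \ref{p1} for that index, with $\langle\nabla F_{i}(x^{k}),d^{k}\rangle\le-\alpha^{k}_{i}\|d^{k}\|^{2}_{B_{k}}\le-\mu^{k}_{i}\|d^{k}\|^{2}_{B_{k}}$ (using $\alpha^{k}_{i}\ge\mu^{k}_{i}$) and the relative $L^{k}_{i}$-smoothness, yields $t_{k}\ge 2\gamma(1-\sigma)\mu^{k}_{i}/L^{k}_{i}$ for some $i\in[m]\setminus\mathcal{L}$, hence the claimed bound.

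For (ii), I would start from the degeneracy of the secant pair of a linear objective: for $i\in\mathcal{L}$ we have $y^{k-1}_{i}=\nabla F_{i}(x^{k})-\nabla F_{i}(x^{k-1})=0$, so $\langle s^{k-1},y^{k-1}_{i}\rangle=0$ and rule (\ref{alpha_k}) forces $\alpha^{k}_{i}=\alpha_{\min}$. Using $d^{k}=-B_{k}^{-1}\left(\sum_{i\in[m]}\frac{\lambda^{k}_{i}\nabla F_{i}(x^{k})}{\alpha^{k}_{i}}\right)$ and splitting the sum over $\mathcal{L}$ and its complement, I solve for the linear block:
\[
\sum_{i\in\mathcal{L}}\lambda^{k}_{i}\nabla F_{i}(x^{k})=-\alpha_{\min}B_{k}d^{k}-\alpha_{\min}\sum_{i\in[m]\setminus\mathcal{L}}\frac{\lambda^{k}_{i}}{\alpha^{k}_{i}}\nabla F_{i}(x^{k}).
\]
On the right-hand side I bound $\|B_{k}d^{k}\|\le b\|d^{k}\|\le(b/\sqrt{a})\|d^{k}\|_{B_{k}}$, and for any $j\in[m]\setminus\mathcal{L}$ the inequality $\langle\nabla F_{j}(x^{k}),d^{k}\rangle\le-\alpha^{k}_{j}\|d^{k}\|^{2}_{B_{k}}\le-\delta_{j}\|d^{k}\|^{2}_{B_{k}}$ together with Cauchy--Schwarz gives $\|d^{k}\|_{B_{k}}\le\|\nabla F_{j}(x^{k})\|/(\sqrt{a}\,\delta_{j})$; the second sum on the right has norm at most $\max_{i\in[m]\setminus\mathcal{L}}\|\nabla F_{i}(x^{k})\|/\delta_{i}$ because $\alpha^{k}_{i}\ge\delta_{i}$ and $\sum_{i\in[m]\setminus\mathcal{L}}\lambda^{k}_{i}\le1$. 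On the left-hand side, since $\nabla F_{i}$ is constant on $\mathcal{L}$ and the linear gradients are non-degenerate, any normalized convex combination of them has norm at least $\epsilon$, so $\|\sum_{i\in\mathcal{L}}\lambda^{k}_{i}\nabla F_{i}(x^{k})\|\ge\epsilon\sum_{i\in\mathcal{L}}\lambda^{k}_{i}$. Combining these with the level-set bound $\|\nabla F_{i}(x^{k})\|\le\|\nabla F_{i}(x^{0})\|+bL_{i}R$ — valid since at least one non-linear $F_{i}$ is strongly convex, so $\mathcal{L}_{F}(x^{0})$ is compact with diameter $R$ — gives $\epsilon\sum_{i\in\mathcal{L}}\lambda^{k}_{i}\le\alpha_{\min}(1+b/a)\max_{i\in[m]\setminus\mathcal{L}}(\|\nabla F_{i}(x^{0})\|+bL_{i}R)/\delta_{i}$, which rearranges into (\ref{lamk}).

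The main obstacle is the lower bound $\|\sum_{i\in\mathcal{L}}\lambda^{k}_{i}\nabla F_{i}(x^{k})\|\ge\epsilon\sum_{i\in\mathcal{L}}\lambda^{k}_{i}$: this is where the non-degeneracy constant $\epsilon$ of the linear part of the problem has to be invoked, and one must observe that the estimate is vacuously true precisely when no linear objective is active. The rest is careful bookkeeping of the passage between $\|\cdot\|$ and $\|\cdot\|_{B_{k}}$ — and between relative and Euclidean smoothness to obtain $\|\nabla F_{i}(x^{k})\|\le\|\nabla F_{i}(x^{0})\|+bL_{i}R$ — which is what pins down the precise constant $c$, in particular the factor $1+b/a$.
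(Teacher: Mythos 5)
Your proposal is correct and follows essentially the same route as the paper's proof: part (i) via the observation that a linear objective can never trigger backtracking, and part (ii) via isolating the linear block $\sum_{i\in\mathcal{L}}\lambda^{k}_{i}a_{i}$ with $\alpha^{k}_{i}=\alpha_{\min}$, lower-bounding its norm by $\epsilon\sum_{i\in\mathcal{L}}\lambda^{k}_{i}$, and upper-bounding $\|d^{k}\|$ and the nonlinear gradients via $\alpha^{k}_{i}\geq\delta_{i}$ and the level-set diameter. The only (immaterial) deviation is that you bound $\|d^{k}\|_{B_{k}}$ through the primal KKT inequality plus Cauchy--Schwarz, whereas the paper bounds $\|d^{k}\|$ through the dual optimal value; both give the same factor $b/a$ and the same constant $c$.
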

\begin{proof}
	(i) As the proof in Proposition \ref{p1}, when the backtracking is conducted, the similar inequality (\ref{E4.4}) must hold for some $i\in[m]\setminus\mathcal{L}$. Consequently, we can derive the lower bound of $t_{k}$ in this case.
	\par(ii) Denote $a_{i}$ the gradient of linear function $F_{i},~i\in\mathcal{L}$, and $C:=\{\sum\limits_{i\in\mathcal{L}}\lambda_{i}a_{i}:\lambda\in\Delta_{|\mathcal{L}|}\}$ the convex hull of the gradients of linear objectives. Recall the assumption that $x^{k}$ is noncritical point, it follows that $0\notin C$. Consequently, we conclude that $\epsilon:=\min\limits_{x\in C}\|x\|>0$. By simple calculation, we have
	\begin{equation}\label{E27}
		\begin{aligned}
		 b\nm{d^{k}}\geq\nm{B_{k}d^{k}}&=\nm{\sum\limits_{i\in[m]}\lambda^{k}_{i}\frac{\nabla F_{i}(x^{k})}{\alpha^{k}_{i}}}\\
		 &=\nm{\frac{\sum\limits_{i\in\mathcal{L}}\lambda^{k}_{i}a_{i}}{\alpha_{\min}}+\sum\limits_{i\in[m]\setminus\mathcal{L}}\lambda^{k}_{i}\frac{\nabla F_{i}(x^{k})}{\alpha^{k}_{i}}}\\
		 &\geq\nm{\frac{\sum\limits_{i\in\mathcal{L}}\lambda^{k}_{i}a_{i}}{\alpha_{\min}}}-\nm{\sum\limits_{i\in[m]\setminus\mathcal{L}}\lambda^{k}_{i}\frac{\nabla F_{i}(x^{k})}{\alpha^{k}_{i}}}\\
		 &=	\frac{\sum\limits_{i\in\mathcal{L}}\lambda^{k}_{i}}{\alpha_{\min}}\nm{\sum\limits_{i\in\mathcal{L}}\frac{\lambda^{k}_{i}}{\sum\limits_{i\in\mathcal{L}}\lambda^{k}_{i}}a_{i}}-\nm{\sum\limits_{i\in[m]\setminus\mathcal{L}}\lambda^{k}_{i}\frac{\nabla F_{i}(x^{k})}{\alpha^{k}_{i}}}\\
		 &\geq \frac{\epsilon\sum\limits_{i\in\mathcal{L}}\lambda^{k}_{i}}{\alpha_{\min}}-\max\limits_{i\in[m]\setminus\mathcal{L}}\nm{\frac{\nabla F_{i}(x^{k})}{\delta_{i}}}.	 
		\end{aligned}
	\end{equation}
On the other hand, from the dual problem, we can derive that
\begin{align*}
	a\|d^{k}\|^{2}\leq\|d^{k}\|^{2}_{B_{k}}=\nm{\sum\limits_{i\in[m]}\frac{\lambda_{i}^{k}\nabla F_{i}(x^{k})}{\alpha_{i}^{k}}}^{2}_{B_{k}^{-1}}
	\leq\frac{1}{a}\max\limits_{i\in[m]\setminus\mathcal{L}}\nm{\frac{\nabla F_{i}(x^{k})}{\delta_{i}}}^{2}.
\end{align*}
Rearranging and substituting the above inequality into (\ref{E27}), it follows that
\begin{equation}\label{E28}
	\sum\limits_{i\in\mathcal{L}}\lambda^{k}_{i}\leq\frac{\alpha_{\min}}{\epsilon}(1+\frac{b}{a})\max\limits_{i\in[m]\setminus\mathcal{L}}\nm{\frac{\nabla F_{i}(x^{k})}{\delta_{i}}}.
\end{equation}
From the relative $\mu^{k}_{i}$-strong convexity of $F_{i}$ and uniformly positive definiteness of $B_{k}$, we can conclude that $F_{i}$ is strongly convex for $i\in[m]\setminus\mathcal{L}$. Then Assumption \ref{a2} holds. Denote $R:=\max\{\nm{x-y}:x,y\in\mathcal{L}_{F}(x^{0})\}$, we deduce that
$$\nm{\nabla F_{i}(x^{k})}\leq \nm{\nabla F_{i}(x^{0})}+\nm{\nabla F_{i}(x^{0})-\nabla F_{i}(x^{k})}\leq\nm{\nabla F_{i}(x^{0})}+bL_{i}R,~i\in[m]\setminus\mathcal{L},$$
where the last inequality is due to the facts that $F_{i}$ is $L_{i}$-smooth relative to $\|\cdot\|_{B_{k}}$ and $B_{k}\preceq bI$. This, together with (\ref{E28}), yields
$$\sum\limits_{i\in\mathcal{L}}\lambda^{k}_{i}\leq\frac{\alpha_{\min}}{\epsilon}(1+\frac{b}{a})\max\limits_{i\in[m]\setminus\mathcal{L}}\frac{\nm{\nabla F_{i}(x^{0})}+bL_{i}R}{\delta_{i}}.$$
This completes the proof.
\end{proof}
\par We are now in the position to present the linear convergence of BBDMO\_VM for problems with some linear objectives.
\begin{theorem}
	Assume that $F_{i}$ is linear for $i\in\mathcal{L}$, $L_{i}^{k}$-smooth ($L_{i}\geq L^{k}_{i}$) and $\mu_{i}^{k}$-strongly convex ($\mu_{i}^{k}\geq\delta_{i}>0$) relative to $\|\cdot\|_{B_{k}}$ for $i\in[m]\setminus\mathcal{L}$, respectively, and Assumption \ref{a1} holds. Let $\{x^{k}\}$ be the sequence generated by Algorithm \ref{bbvm}. Then, the following statements hold.
	\begin{itemize}
		\item[$\mathrm{(i)}$] $\{x^{k}\}$ converges to some Pareto solution $x^{*}$.
		\item[$\mathrm{(ii)}$] $u^{\delta^{\alpha}}_{0}(x^{k+1})\leq\left(1-2\gamma\sigma(1-\sigma)(1-c\alpha_{\min}) \min\limits_{i\in[m]}\left\{\frac{\mu^{k}_{i}\delta_{i}^{2}}{(L^{k}_{i})^{3}}\right\}\right)u^{\delta^{\alpha}}_{0}(x^{k}).$
	\end{itemize}
\end{theorem}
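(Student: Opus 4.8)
The plan is to rerun the proof of Theorem \ref{t3}, replacing everywhere the relative strong convexity of \emph{all} objectives by Proposition \ref{p2}: its part (i) supplies the stepsize lower bound (linear objectives never trigger backtracking) and its part (ii) bounds the dual mass $\sum_{i\in\mathcal{L}}\lambda^{k}_{i}\le c\alpha_{\min}$ carried by the linear objectives. Throughout put $\delta^{\alpha}_{i}:=\delta_{i}$ for $i\in[m]\setminus\mathcal{L}$ and $\delta^{\alpha}_{i}:=\alpha_{\min}$ for $i\in\mathcal{L}$ (note $\alpha^{k}_{i}=\alpha_{\min}$ on $\mathcal{L}$, since $y^{k-1}_{i}=0$ there), assume $\mathcal{L}\neq[m]$ and $c\alpha_{\min}<1$ (legitimate, as $\alpha_{\min}$ is taken sufficiently small), and read $\min_{i\in[m]}\{\mu^{k}_{i}\delta^{2}_{i}/(L^{k}_{i})^{3}\}$ in the statement as the minimum over $i\in[m]\setminus\mathcal{L}$ (linear objectives, being $0$-strongly convex, do not contribute). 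For (i): each $F_{i}$ with $i\in[m]\setminus\mathcal{L}$ is strongly convex (relative strong convexity together with $B_{k}\succeq aI$), so Assumption \ref{a2} holds; moreover $\sum_{i\in[m]\setminus\mathcal{L}}\lambda^{k}_{i}\ge1-c\alpha_{\min}>0$ persists in the limit, so at any accumulation point the KKT multiplier puts positive weight on a strongly convex objective and $\sum_{i}\lambda^{*}_{i}\nabla^{2}F_{i}(x^{*})\succ0$, i.e. Assumption \ref{a3} holds. Theorem \ref{t2} then gives $x^{k}\to x^{*}$, a Pareto solution, exactly as in Theorem \ref{t3}(i).

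For (ii) I first reproduce the one-step estimate (\ref{E22}). The Armijo condition and $\alpha^{k}_{i}>0$ give, for each $i\in[m]$,
\[
\frac{F_{i}(x^{k+1})-F_{i}(x^{k})}{\alpha^{k}_{i}}\le t_{k}\sigma\Bigl(\frac{\dual{\nabla F_{i}(x^{k}),d^{k}}}{\alpha^{k}_{i}}+\tfrac12\nm{d^{k}}_{B_{k}}^{2}\Bigr)\le -t_{k}\sigma\,w_{1}^{\alpha^{k}}(x^{k}),
\]
since $d^{k}$ is the minimizer defining $w_{1}^{\alpha^{k}}(x^{k})$. As $\delta^{\alpha}_{i}\le\alpha^{k}_{i}$ for all $i$ and $F_{i}(x^{k+1})-F_{i}(x^{k})\le0$, the same bound holds with $\delta^{\alpha}_{i}$ in the denominator; inserting $t_{k}\ge2\gamma(1-\sigma)\min_{i\in[m]\setminus\mathcal{L}}\{\mu^{k}_{i}/L^{k}_{i}\}$ from (\ref{tk}) yields
\[
\frac{F_{i}(x^{k+1})-F_{i}(x^{k})}{\delta^{\alpha}_{i}}\le -2\gamma\sigma(1-\sigma)\min_{i\in[m]\setminus\mathcal{L}}\Bigl\{\frac{\mu^{k}_{i}}{L^{k}_{i}}\Bigr\}\,w_{1}^{\alpha^{k}}(x^{k}),\qquad i\in[m].
\]

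The crux is the bound $w_{1}^{\alpha^{k}}(x^{k})\ge\frac{1-c\alpha_{\min}}{(r^{k})^{2}}\,u_{0}^{\delta^{\alpha}}(x^{k})$, where $r^{k}:=\max_{i\in[m]}\{\alpha^{k}_{i}/\delta^{\alpha}_{i}\}\ge1$; this is the analogue of (\ref{E23}). Bounding $F_{i}(x^{k})-F_{i}(y)$ above by $\dual{\nabla F_{i}(x^{k}),x^{k}-y}-\tfrac{\mu^{k}_{i}}{2}\nm{x^{k}-y}^{2}_{B_{k}}$ for $i\in[m]\setminus\mathcal{L}$ (with $\mu^{k}_{i}/\delta_{i}\ge1$) and by the identity $\dual{a_{i},x^{k}-y}$ for $i\in\mathcal{L}$ (here $a_{i}$ is the gradient of the linear $F_{i}$), and using that at the maximizer of the resulting expression all the inner products $\dual{\nabla F_{i}(x^{k}),x^{k}-y}$ and $\dual{a_{i},x^{k}-y}$ are nonnegative (the value being $\ge0$ there), the rescaling $x^{k}-y=r^{k}z$ absorbs the ratios $\alpha^{k}_{i}/\delta^{\alpha}_{i}$ and gives
\[
u_{0}^{\delta^{\alpha}}(x^{k})\le(r^{k})^{2}\sup_{z}\min\Bigl\{\min_{i\in[m]\setminus\mathcal{L}}\bigl[\tfrac{\dual{\nabla F_{i}(x^{k}),z}}{\alpha^{k}_{i}}-\tfrac12\nm{z}^{2}_{B_{k}}\bigr],\ \min_{i\in\mathcal{L}}\tfrac{\dual{a_{i},z}}{\alpha_{\min}}\Bigr\}.
\]
Now bound the inner $\min$ by its $\lambda^{k}$-weighted average: since $\alpha^{k}_{i}=\alpha_{\min}$ on $\mathcal{L}$ the linear terms merge with the others into $\dual{-B_{k}d^{k},z}$, while the coefficient of $-\tfrac12\nm{z}^{2}_{B_{k}}$ is $\sum_{i\in[m]\setminus\mathcal{L}}\lambda^{k}_{i}\ge1-c\alpha_{\min}$ by (\ref{lamk}); maximizing over $z$ produces $\frac{1}{1-c\alpha_{\min}}\cdot\tfrac12\nm{d^{k}}_{B_{k}}^{2}=\frac{1}{1-c\alpha_{\min}}w_{1}^{\alpha^{k}}(x^{k})$, which is the claim.

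Combining the two displays with $(r^{k})^{-2}\ge\min_{i\in[m]\setminus\mathcal{L}}(\delta_{i}/L^{k}_{i})^{2}$ and $\min_{i\in[m]\setminus\mathcal{L}}\{\mu^{k}_{i}/L^{k}_{i}\}\cdot\min_{i\in[m]\setminus\mathcal{L}}(\delta_{i}/L^{k}_{i})^{2}\ge\min_{i\in[m]\setminus\mathcal{L}}\{\mu^{k}_{i}\delta^{2}_{i}/(L^{k}_{i})^{3}\}$ gives, for every $i\in[m]$,
\[
\frac{F_{i}(x^{k+1})-F_{i}(x^{k})}{\delta^{\alpha}_{i}}\le -2\gamma\sigma(1-\sigma)(1-c\alpha_{\min})\min_{i\in[m]\setminus\mathcal{L}}\Bigl\{\frac{\mu^{k}_{i}\delta^{2}_{i}}{(L^{k}_{i})^{3}}\Bigr\}\,u_{0}^{\delta^{\alpha}}(x^{k}).
\]
Writing $F_{i}(x^{k+1})-F_{i}(x)=(F_{i}(x^{k+1})-F_{i}(x^{k}))+(F_{i}(x^{k})-F_{i}(x))$, dividing by $\delta^{\alpha}_{i}$, taking $\min_{i\in[m]}$ (the first summand is $i$-independent) and then $\sup_{x\in\mathbb{R}^{n}}$ yields (ii). The main obstacle is the third paragraph: linear objectives carry no curvature, so the clean inequality $u_{0}^{\delta}\le(r^{k})^{2}w_{1}^{\alpha^{k}}$ of Theorem \ref{t3} degrades, and one must route the estimate through the dual weights $\lambda^{k}$ so that the curvature deficiency of $\mathcal{L}$ surfaces only as the benign factor $1-c\alpha_{\min}$ of Proposition \ref{p2}(ii); everything else parallels Theorem \ref{t3}.
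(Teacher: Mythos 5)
Your proposal is correct and follows essentially the same route as the paper's proof: the one-step Armijo estimate against $w_{1}^{\alpha^{k}}$, the comparison of $u_{0}^{\delta^{\alpha}}$ with the ``indicator'' min--max in which only $i\in[m]\setminus\mathcal{L}$ carry the quadratic term, and the $\lambda^{k}$-weighted-average/rescaling step that turns the dual-mass bound $\sum_{i\in\mathcal{L}}\lambda^{k}_{i}\le c\alpha_{\min}$ of Proposition \ref{p2}(ii) into the factor $1-c\alpha_{\min}$, combined with the stepsize bound of Proposition \ref{p2}(i). The only cosmetic differences are the order in which the two middle inequalities are chained and, in part (i), your limit argument on the dual weights where the paper instead derives a contradiction from $\sum_{i\in\mathcal{L}}\lambda^{*}_{i}a_{i}=0$; both yield Assumption \ref{a3} and hence convergence via Theorem \ref{t2}.
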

\begin{proof}
	(i) As described in the proof of Proposition \ref{p2}, we deduce that Assumption \ref{a2} holds. From Theorem \ref{t1}, $\{x^{k}\}$ has an accumulation point $x^{*}$, and there exists $\lambda^{*}\in\Delta_{m}$ such that 
	$$\sum\limits_{i\in[m]}\lambda^{*}_{i}\nabla F_{i}(x^{*})=0.$$
	In what follow, we prove that $\sum\limits_{i\in[m]\setminus\mathcal{L}}\lambda^{*}_{i}\neq0$. If otherwise, we have 
	$$\sum\limits_{i\in[m]}\lambda^{*}_{i}\nabla F_{i}(x^{*})=\sum\limits_{i\in\mathcal{L}}\lambda^{*}_{i}\nabla F_{i}(x^{*})=\sum\limits_{i\in\mathcal{L}}\lambda^{*}_{i}a_{i}=0,$$
	where $a_{i}$ is the gradient of linear function $F_{i},~i\in\mathcal{L}$. This indicates that every $x\in\mathbb{R}^{n}$ is a Pareto critical point. It contradicts the assumption that  Algorithm \ref{bbvm} produces an infinite sequence of noncritical points. As a result, we have $\sum_{i\in[m]\setminus\mathcal{L}}\lambda^{*}_{i}\neq0$. This, together with the strong convexity of $F_{i},~i\in[m]\setminus\mathcal{L}$, yields the Assumption \ref{a3}. Then, the assertion (i) is a consequence of Theorem \ref{t2}.
	\par(ii) We refer to the proof in Theorem \ref{t3}, then 
	$$\mu^{k}_{i}\leq\alpha^{k}_{i}\leq L^{k}_{i},~i\in[m]\setminus\mathcal{L},~\alpha^{k}_{i}=\alpha_{\min},~i\in\mathcal{L}.$$
	and 
	\begin{equation}\label{E25}
	\frac{F_{i}(x^{k+1})-F_{i}(x^{k})}{\alpha^{k}_{i}}\leq-t_{k}\sigma w_{1}^{\alpha^{k}}(x^{k}).
	\end{equation}
	Denote 
	\begin{equation*}
		\mathds{1}_{[m]\setminus\mathcal{L}}(i):=\left\{
		\begin{aligned}
			1,~~~~~&i\in[m]\setminus\mathcal{L},\\
			0,~~~~~&x\in\mathcal{L}.
		\end{aligned}
		\right.
	\end{equation*}
	By simple calculation, we have
		\begin{align*}
			&~~~~\min\limits_{d\in\mathbb{R}^{n}}\max\limits_{i\in[m]}\left\{\dual{\frac{\nabla F_{i}(x^{k})}{\alpha^{k}_{i}},d}+\frac{\mathds{1}_{[m]\setminus\mathcal{L}}(i)}{2}\nm{d}^{2}_{B_{k}}\right\}\\
			&\geq\min\limits_{d\in\mathbb{R}^{n}}\left\{\dual{\sum\limits_{i\in[m]}\frac{\lambda^{k}_{i}\nabla F_{i}(x^{k})}{\alpha^{k}_{i}},d}+\frac{\sum\limits_{i\in[m]\setminus\mathcal{L}}\lambda^{k}_{i}}{2}\nm{d}^{2}_{B_{k}}\right\}
			\end{align*}
			\begin{align*}
			&=\frac{1}{\sum\limits_{i\in[m]\setminus\mathcal{L}}\lambda^{k}_{i}}\min\limits_{d\in\mathbb{R}^{n}}\left\{\dual{\sum\limits_{i\in[m]}\frac{\lambda^{k}_{i}\nabla F_{i}(x^{k})}{\alpha^{k}_{i}},\left(\sum\limits_{i\in[m]\setminus\mathcal{L}}\lambda^{k}_{i}\right)d}+\frac{1}{2}\nm{\left(\sum\limits_{i\in[m]\setminus\mathcal{L}}\lambda^{k}_{i}\right)d}^{2}_{B_{k}}\right\}\\
			&=\frac{1}{\sum\limits_{i\in[m]\setminus\mathcal{L}}\lambda^{k}_{i}}\min\limits_{d\in\mathbb{R}^{n}}\left\{\dual{\sum\limits_{i\in[m]}\frac{\lambda^{k}_{i}\nabla F_{i}(x^{k})}{\alpha^{k}_{i}},d}+\frac{1}{2}\nm{d}^{2}_{B_{k}}\right\}\\
			&=-\frac{1}{\sum\limits_{i\in[m]\setminus\mathcal{L}}\lambda^{k}_{i}}w_{1}^{\alpha^{k}}(x^{k}),
		\end{align*}

Substituting the above inequality into (\ref{E25}), it follows that
\begin{equation}\label{E26}
	\frac{F_{i}(x^{k+1})-F_{i}(x^{k})}{\alpha^{k}_{i}}\leq t_{k}\sigma\left(\sum\limits_{i\in[m]\setminus\mathcal{L}}\lambda^{k}_{i}\right)\min\limits_{d\in\mathbb{R}^{n}}\max\limits_{i\in[m]}\left\{\dual{\frac{\nabla F_{i}(x^{k})}{\alpha^{k}_{i}},d}+\frac{\mathds{1}_{[m]\setminus\mathcal{L}}(i)}{2}\nm{d}^{2}_{B_{k}}\right\}.
\end{equation}
	Denote $$\delta^{\alpha}:=\{(\delta^{\alpha}_{1},...,\delta^{\alpha}_{m}):\delta^{\alpha}_{i}=\delta_{i},\delta^{\alpha}_{j}=\alpha_{\min},i\in[m]\setminus\mathcal{L},j\in\mathcal{L}\}.$$
	We use the relative $\mu^{k}_{i}$-strong convexity and of $F_{i}$ for $i\in[m]\setminus\mathcal{L}$ to get
	\begin{align*}
		&~~~~\sup\limits_{x\in\mathbb{R}^{n}}\min\limits_{i\in[m]}\left\{\frac{F_{i}(x^{k})-F_{i}(x)}{\delta^{\alpha}_{i}}\right\}\\
		&\leq\sup\limits_{x\in\mathbb{R}^{n}}\min\limits_{i\in[m]}\left\{\frac{\dual{\nabla F_{i}(x^{k}),x^{k}-x}}{\delta^{\alpha}_{i}}-\frac{\mathds{1}_{[m]\setminus\mathcal{L}}(i)}{2}\|x^{k}-x\|^{2}_{B_{k}}\right\}\\
		&\leq\sup\limits_{x\in\mathbb{R}^{n}}\min\limits_{i\in[m]}\left\{\bar{r}^{k}\frac{\dual{\nabla F_{i}(x^{k}),x^{k}-x}}{\alpha^{k}_{i}}-\frac{\mathds{1}_{[m]\setminus\mathcal{L}}(i)}{2}\|x^{k}-x\|^{2}_{B_{k}}\right\}\\
		&={(\bar{r}^{k})^{2}}\sup\limits_{x\in\mathbb{R}^{n}}\min\limits_{i\in[m]}\left\{\frac{\dual{\nabla F_{i}(x^{k}),\frac{x^{k}-x}{\bar{r}^{k}}}}{\alpha^{k}_{i}}-\frac{\mathds{1}_{[m]\setminus\mathcal{L}}(i)}{2}\|\frac{x^{k}-x}{\bar{r}^{k}}\|^{2}_{B_{k}}\right\}\\
		&=-{(\bar{r}^{k})^{2}}\min\limits_{d\in\mathbb{R}^{n}}\max\limits_{i\in[m]}\left\{\dual{\frac{\nabla F_{i}(x^{k})}{\alpha^{k}_{i}},d}+\frac{\mathds{1}_{[m]\setminus\mathcal{L}}(i)}{2}\nm{d}^{2}_{B_{k}}\right\}
	\end{align*}
where $\bar{r}^{k}:=\max\limits_{i\in[m]}\left\{\frac{\alpha^{k}_{i}}{\delta^{\alpha}_{i}}\right\}$. Substituting the above inequality into (\ref{E26}) and utilizing $\delta^{\alpha}_{i}\leq\alpha^{k}_{i}$, we conclude that
\begin{equation*}
	\begin{aligned}
	\frac{F_{i}(x^{k+1})-F_{i}(x^{k})}{\delta^{\alpha}_{i}}&\leq\frac{F_{i}(x^{k+1})-F_{i}(x^{k})}{\alpha^{k}_{i}}\\
	&\leq -t_{k}\sigma\left(\sum\limits_{i\in[m]\setminus\mathcal{L}}\lambda^{k}_{i}\right)\frac{1}{(\bar{r}^{k})^{2}}u^{\delta^{\alpha}}_{0}(x^{k})\\
	&\leq-t_{k}\sigma\left(\sum\limits_{i\in[m]\setminus\mathcal{L}}\lambda^{k}_{i}\right)\left(\min\limits_{i\in[m]\setminus\mathcal{L}}\left\{\frac{\delta_{i}}{L_{i}^{k}}\right\}\right)^{2}u^{\delta^{\alpha}}_{0}(x^{k}),
	\end{aligned}
\end{equation*}
where the last inequality comes from the facts $\alpha^{k}_{i}\leq L^{k}_{i},~i\in[m]\setminus\mathcal{L}$ and $\alpha^{k}_{i}=\alpha_{\min},~i\in\mathcal{L}$.	
Then, for all $x\in\mathbb{R}^{n}$, we have
$$\frac{F_{i}(x^{k+1})-F_{i}(x)}{\delta^{\alpha}_{i}}\leq\frac{F_{i}(x^{k})-F_{i}(x)}{\delta^{\alpha}_{i}}-t_{k}\sigma\left(\sum\limits_{i\in[m]\setminus\mathcal{L}}\lambda^{k}_{i}\right)\left(\min\limits_{i\in[m]\setminus\mathcal{L}}\left\{\frac{\delta_{i}}{L_{i}^{k}}\right\}\right)^{2}u^{\delta^{\alpha}}_{0}(x^{k}).$$	
Taking the supremum and minimum with respect to $x\in\mathbb{R}^{n}$ and $i\in[m]$ on both sides, respectively, we obtain	
$$u^{\delta^{\alpha}}_{0}(x^{k+1})\leq\left(1-t_{k}\sigma\left(\sum\limits_{i\in[m]\setminus\mathcal{L}}\lambda^{k}_{i}\right)\left(\min\limits_{i\in[m]\setminus\mathcal{L}}\left\{\frac{\delta_{i}}{L_{i}^{k}}\right\}\right)^{2}\right)u^{\delta^{\alpha}}_{0}(x^{k}).$$	
Thus, the desired result follows by substituting (\ref{tk}) and (\ref{lamk}) into the above inequality.	
\end{proof}

\section{Metric selection in VMPGMO}\label{sec6}
In the context of the linear convergence of BBDMO\_VM discussed in the previous section, the selection of $B_{k}$ can profoundly influence the convergence rate. Naturally, a remaining question is: How do we choose $B_{k}$ to enhance performance? In the realm of single-objective optimization problems (SOPs), a well-known approach is to set $B_{k}=\nabla ^{2}f(x^{k})$, which corresponds to Newton's method. However, for multiobjective optimization problems (MOPs), we cannot use any single $\nabla^{2}f_{i}(x^{k})$ to simultaneously approximate all Hessian matrices, especially when the Hessian matrices are distinct from each other. Notably, SDMO can be interpreted as an implicit gradient descent method with adaptive scalarization (where the weight vector is the optimal solution of a dual problem). From the perspective of scalarization, a judicious choice for $B_{k}$ is to approximate the variable aggregated Hessian. The following subsections will provide details on selecting $B_{k}$.
\subsection{Trade-off of Hessian matrices}
In this subsection, $d^{k}$ approximates multiobjective Newton direction by selecting appropriate $B_{k}$. At each iteration $k$, the multiobjective Newton direction is 
$$-[\nabla^{2}F_{\lambda^{N}(x^{k})}(x^{k})]^{-1}\nabla F_{\lambda^{N}(x^{k})}(x^{k}),$$
the Barzilai-Borwein descent direction with variable metric is 
$$-B_{k}^{-1}\nabla F_{\frac{\lambda^{k}}{\alpha^{k}}}(x^{k}).$$
From observation, a reasonable choice of $B_{k}$ is $\nabla^{2}F_{\frac{\lambda^{k}}{\alpha^{k}}}(x^{k})$. 
In general $\frac{\lambda^{k}}{\alpha^{k}}\not\in\Delta_{m}$, but there exists a $\bar{\lambda}^{k}:=\frac{\lambda^{k}}{\alpha^{k}}/(\sum_{i\in[m]}\frac{\lambda^{k}_{i}}{\alpha^{k}_{i}})\in\Delta_{m}$ such that $$d^{k}=-[\nabla^{2}F_{\bar{\lambda}^{k}}(x^{k})]^{-1}\nabla F_{\bar{\lambda}^{k}}(x^{k}).$$
As a result, the selected $B_{k}$ can be perceived as the trade-off among Hessian matrices, with the weight vector being adaptively updated in each iteration.  Unfortunately, $\lambda^{k}$ is unavailable before computing the subproblem, and $\alpha^{k}$ is determined using $B_{k}$.  As an alternative, we can replace $\frac{\lambda^{k}}{\alpha^{k}}$ with $\frac{\lambda^{k-1}}{\alpha^{k-1}}$.
\subsection{Trade-off of quasi-Newton approximation}
Two remaining shortcomings exist with $B_{k}=\nabla^{2}f_{\frac{\lambda^{k-1}}{\alpha^{k-1}}}(x^{k})$: Hessian matrices are not readily available, and obtaining the inverse of $B_{k}$ is computationally expensive. To address the issues, we use BFGS formulation to update $B_{k}$, specifically,

\begin{equation}\label{Bk}
	B_{k+1}=\left\{
	\begin{aligned}
		\begin{split}
			&B_{k}-\frac{B_{k}s_{k}s_{k}^{T}B_{k}}{\dual{s_{k},B_{k}s_{k}}}+\frac{y_{k}y_{k}^{T}}{\dual{s_{k},y_{k}}}, & &\dual{s_{k},y_{k}}>0, \\
			&B_{k}, & &\text{otherwise},\\
		\end{split}
	\end{aligned}
	\right.
\end{equation}
where $s_{k}=x^{k+1}-x^{k}$, $y_{k}=\nabla F_{\frac{\lambda^{k-1}}{\alpha^{k-1}}}(x^{k+1}) - \nabla F_{\frac{\lambda^{k-1}}{\alpha^{k-1}}}(x^{k})$. The condition $\dual{s_{k},y_{k}}>0$ guarantees the positive definiteness of $B_{k+1}$. When the condition does not hold, we set $B_{k+1}=B_{k}$ to keep the positive definiteness. The corresponding inverse matrix can be updated by
\begin{equation}\label{Bk-}
	B_{k+1}^{-1}=\left\{
	\begin{aligned}
		&\left(I-\frac{s_{k}y_{k}^{T}}{\dual{s_{k},y_{k}}}\right)B_{k}^{-1}\left(I-\frac{y_{k}s_{k}^{T}}{\dual{s_{k},y_{k}}}\right)+\frac{s_{k}s_{k}^{T}}{\dual{s_{k},y_{k}}},& &\dual{s_{k},y_{k}}>0, \\
		&B_{k}^{-1}, & &\text{otherwise}.\\
	\end{aligned}
	\right.
\end{equation}
\section{Numerical results}\label{sec7}
In this section, we present numerical results to demonstrate the performance of BBDMO\_VM for various problems, where the variable metric is the trade-off of quasi-Newton approximation. We also compare BBDMO\_VM with quasi-Newton method  (QNMO) \cite{P2014}, variable metric method (VMMO) \cite{AP2015,CLY2023} and Barzilai-Borwein descent method (BBDMO) \cite{CTY2023} to show its efficiency. All numerical experiments were implemented in Python 3.7 and executed on a personal computer with an Intel Core i7-11390H, 3.40 GHz processor, and 16 GB of RAM. The For BBDMO and BBDMO\_VM, we set $\alpha_{\min}=10^{-3}$ and $\alpha_{\max}=10^{3}$ to truncate the Barzilai-Borwein's parameter\footnote{For larger-scale and more complicated problems, smaller values for $\alpha_{\min}$ and larger values for $\alpha_{\max}$ should be selected.}. We set $\sigma=10^{-4}$ and $\gamma=0.5$ in the line search procedure. To ensure that the algorithms terminate after a finite number of iterations, we use the stopping criterion $\|d(x)\|\leq 10^{-6}$ for all tested algorithms. We also set the maximum number of iterations to 500. For each problem, we use the same initial points for different tested algorithms. The initial points are randomly selected within the specified lower and upper bounds. The subproblem of QNMO is solved by {\ttfamily scipy.optimize}, a Python-embedded modelling language for optimization problems. Based on the Frank-Wolfe method, our codes solve the subproblems of VMMO, BBDMO, and BBDMO\_VM. The recorded averages from the 200 runs include the number of iterations, the number of function evaluations, and the CPU time.

\subsection{Ordinary test problems}
The tested algorithms are executed on several test problems, and the problem illustration is given in Table \ref{tab1}. The dimensions of variables and objective functions are presented in the second and third columns, respectively. $x_{L}$ and $x_{U}$ represent lower bounds and upper bounds of variables, respectively.
\begin{table}[H]
	\centering
	\resizebox{.95\columnwidth}{!}{
		\begin{tabular}{lllllllllll}
			\hline
			Problem    &  & $n$     &           & $m$     &  & $x_{L}$               &  & $x_{U}$             &  & Reference \\ \hline
			BK1        &  & 2     & \textbf{} & 2     &  & (-5,-5)         &  & (10,10)       &  & \cite{BK1}         \\
			DD1        &  & 5     & \textbf{} & 2     &  & (-20,...,-20)   &  & (20,...,20)   &  & \cite{DD1998}         \\
			Deb        &  & 2     & \textbf{} & 2     &  & (0.1,0.1)       &  & (1,1)         &  & \cite{D1999}          \\
			Far1       &  & 2     & \textbf{} & 2     &  & (-1,-1)         &  & (1,1)         &  & \cite{BK1}         \\
			FDS        &  & 5     & \textbf{} & 3     &  & (-2,...,-2)     &  & (2,...,2)     &  & \cite{FD2009}         \\
			FF1        &  & 2     & \textbf{} & 2     &  & (-1,-1)         &  & (1,1)         &  & \cite{BK1}         \\
			Hil1       &  & 2     & \textbf{} & 2     &  & (0,0)           &  & (1,1)         &  & \cite{Hil1}         \\
			Imbalance1 &  & 2     & \textbf{} & 2     &  & (-2,-2)         &  & (2,2)         &  & \cite{CTY2023}         \\
			Imbalance2 &  & 2     & \textbf{} & 2     &  & (-2,-2)         &  & (2,2)         &  & \cite{CTY2023}         \\
			JOS1a      &  & 50    & \textbf{} & 2     &  & (-2,...,-2)     &  & (2,...,2)     &  & \cite{JO2001}         \\
			JOS1b      &  & 100   & \textbf{} & 2     &  & (-2,...,-2)     &  & (2,...,2)     &  & \cite{JO2001}         \\
			JOS1c      &  & 100   & \textbf{} & 2     &  & (-50,...,-50)   &  & (50,...,50)   &  & \cite{JO2001}         \\
			JOS1d      &  & 100   & \textbf{} & 2     &  & (-100,...,-100) &  & (100,...,100) &  & \cite{JO2001}        \\
			LE1        &  & 2     & \textbf{} & 2     &  & (-5,-5)         &  & (10,10)       &  & \cite{BK1}        \\
			PNR        &  & 2     & \textbf{} & 2     &  & (-2,-2)         &  & (2,2)         &  & \cite{PN2006}        \\
			VU1        &  & 2     & \textbf{} & 2     &  & (-3,-3)         &  & (3,3)         &  & \cite{BK1}        \\
			WIT1       &  & 2     & \textbf{} & 2     &  & (-2,-2)         &  & (2,2)         &  & \cite{W2012}       \\
			WIT2       &  & 2     & \textbf{} & 2     &  & (-2,-2)         &  & (2,2)         &  & \cite{W2012}        \\
			WIT3       &  & 2     & \textbf{} & 2     &  & (-2,-2)         &  & (2,2)         &  & \cite{W2012}        \\
			WIT4       &  & 2     & \textbf{} & 2     &  & (-2,-2)         &  & (2,2)         &  & \cite{W2012}        \\
			WIT5       &  & 2     & \textbf{} & 2     &  & (-2,-2)         &  & (2,2)         &  & \cite{W2012}        \\
			WIT6       &  & 2 & \textbf{} & 2 &  & (-2,-2)         &  & (2,2)         &  & \cite{W2012}        \\ \hline
		\end{tabular}
	}
	\caption{Description of all test problems used in numerical experiments.}
	\label{tab1}
\end{table}

%
%
%
%
%

\begin{figure}[H]
	\centering
	\subfigure[QNMO]
	{
		\begin{minipage}[H]{.22\linewidth}
			\centering
			\includegraphics[scale=0.22]{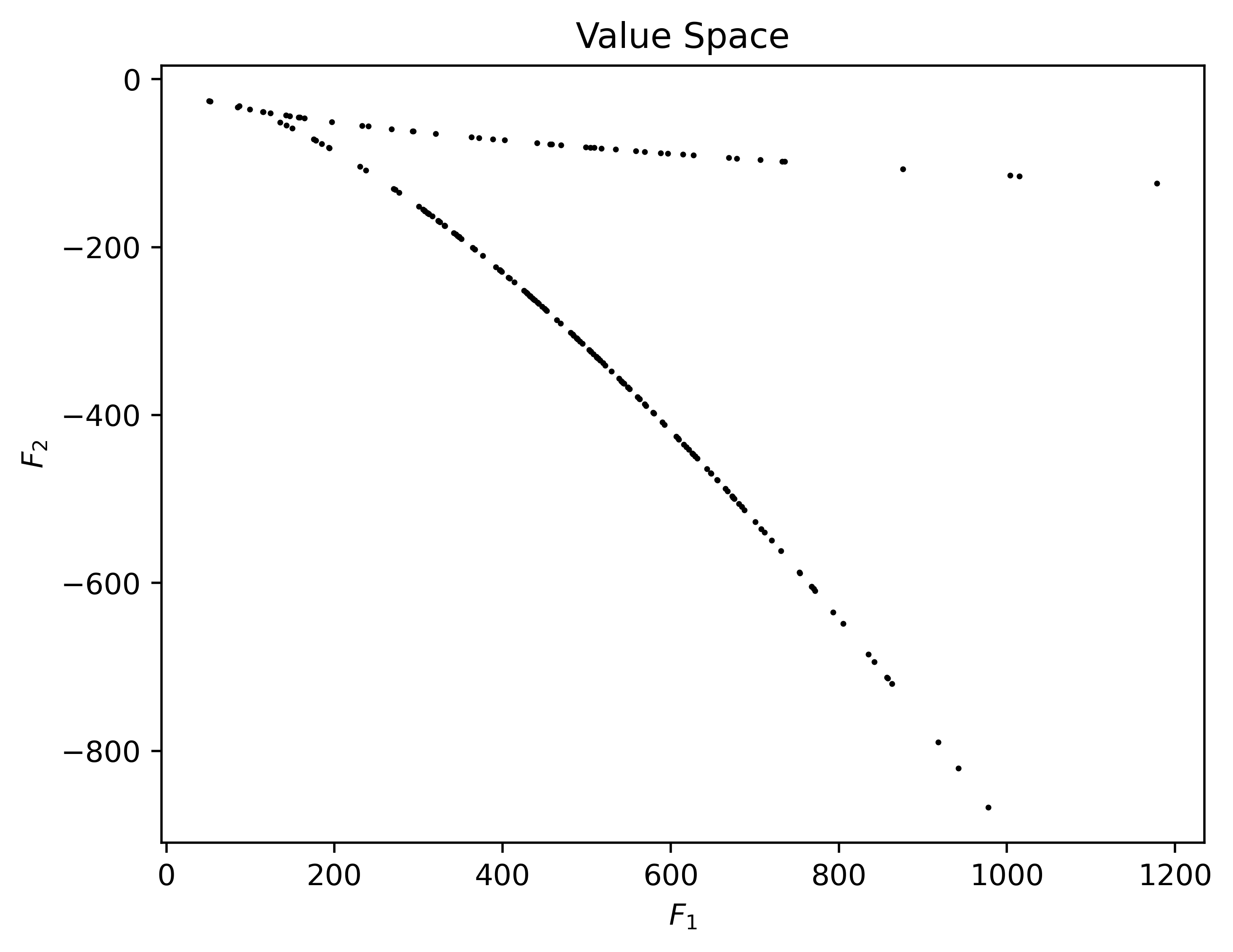} \\
			\includegraphics[scale=0.22]{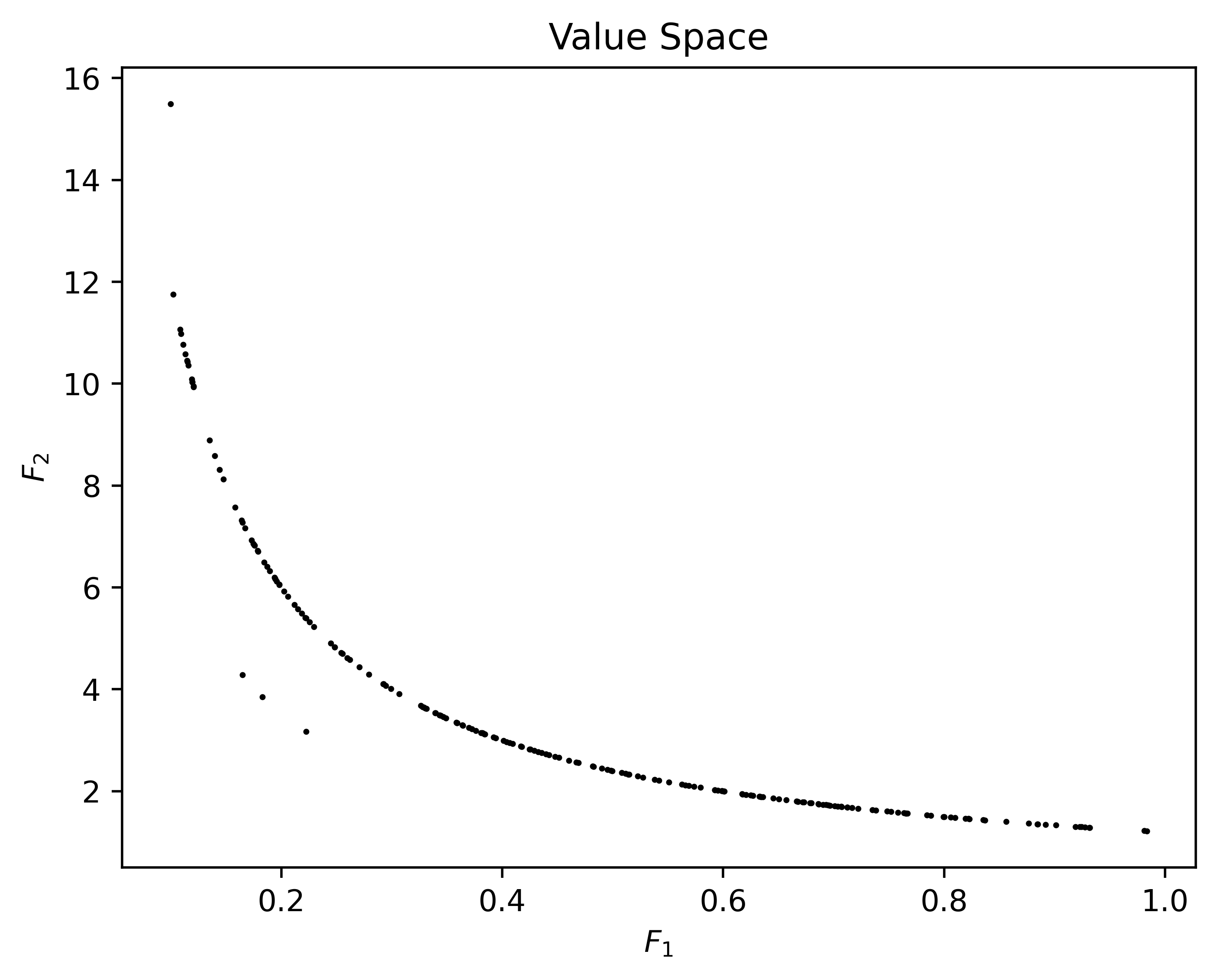} \\
			\includegraphics[scale=0.22]{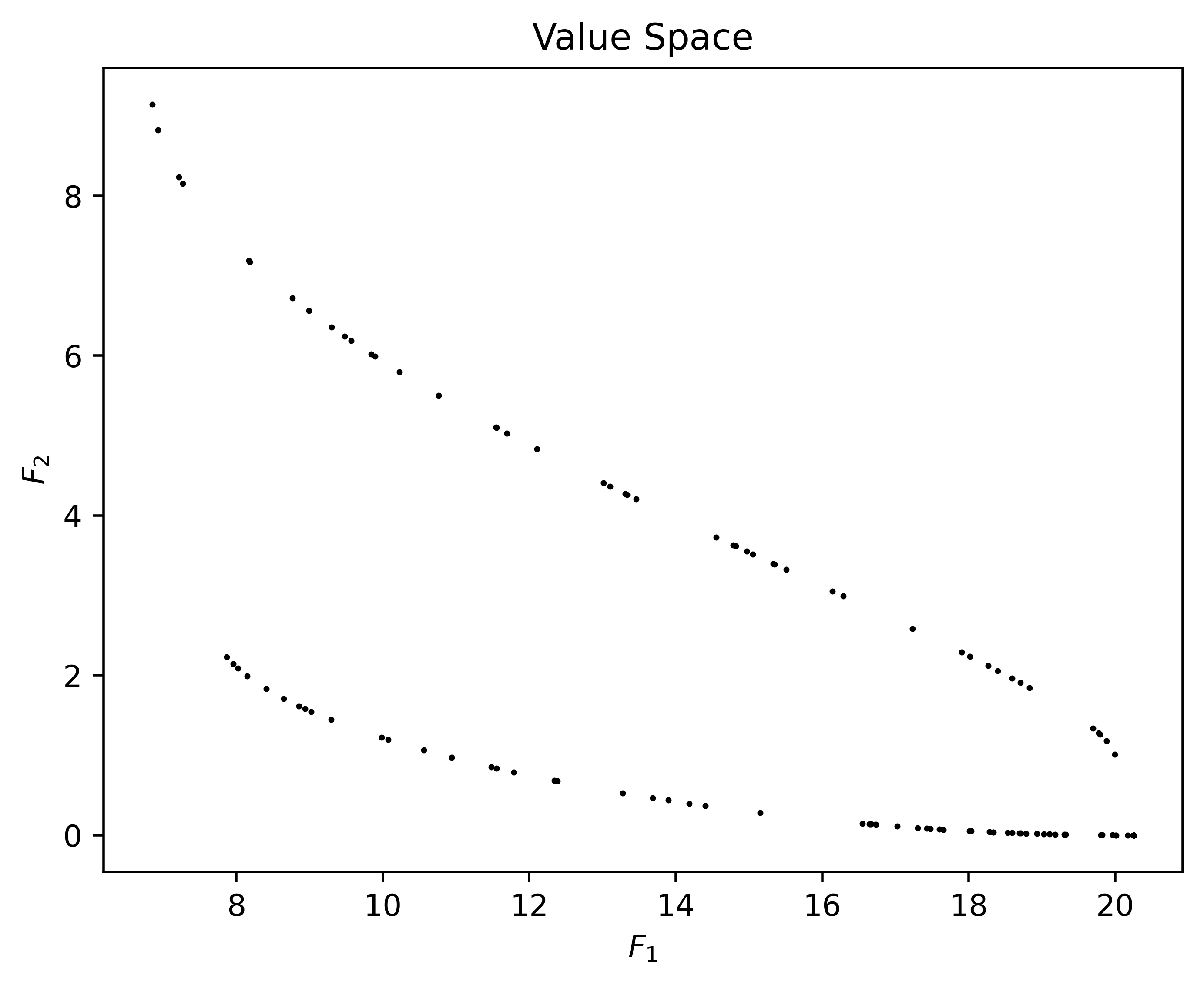}
		\end{minipage}
	}
	\subfigure[VMMO]
	{
		\begin{minipage}[H]{.22\linewidth}
			\centering
			\includegraphics[scale=0.22]{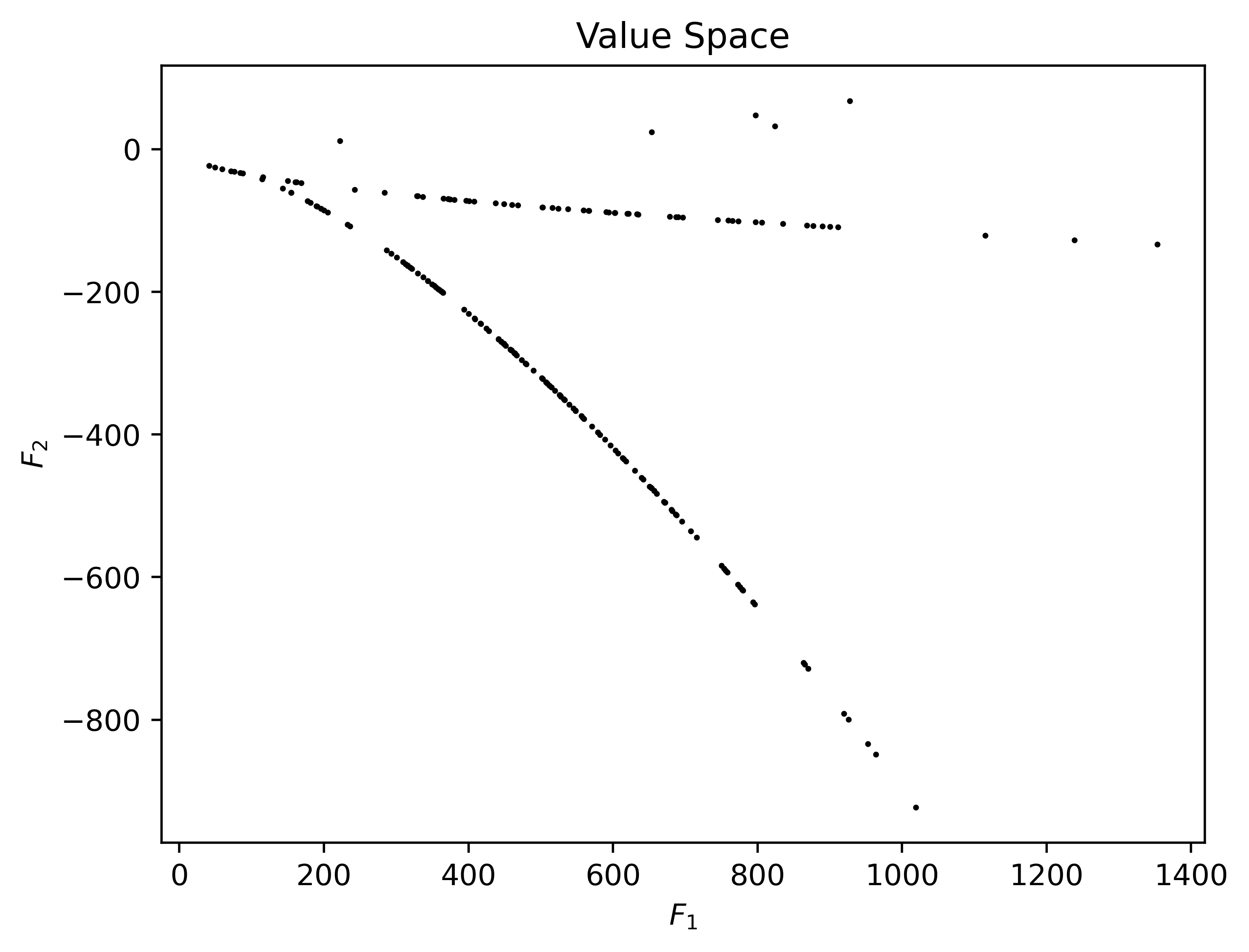}\\
			\includegraphics[scale=0.22]{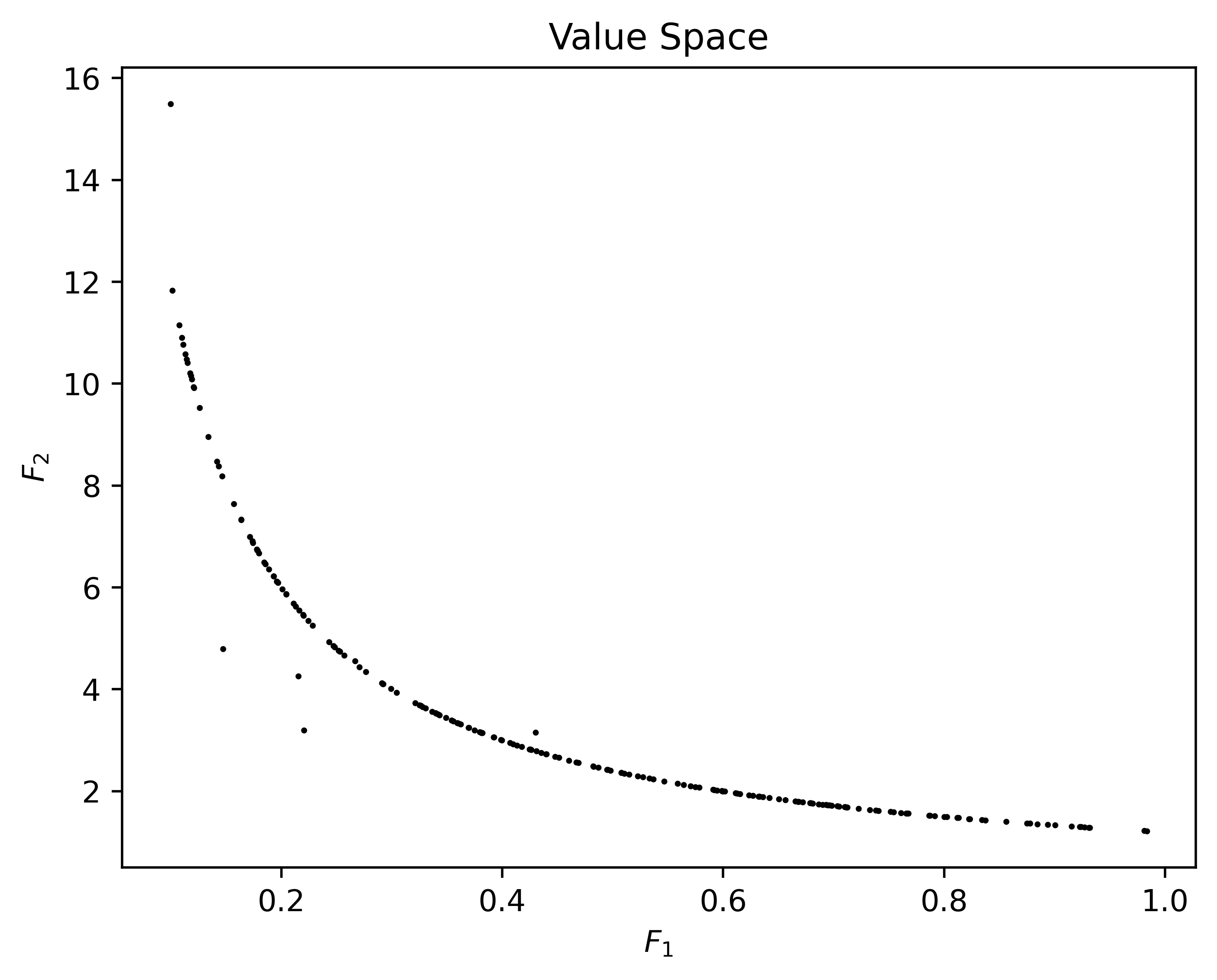}\\
			\includegraphics[scale=0.22]{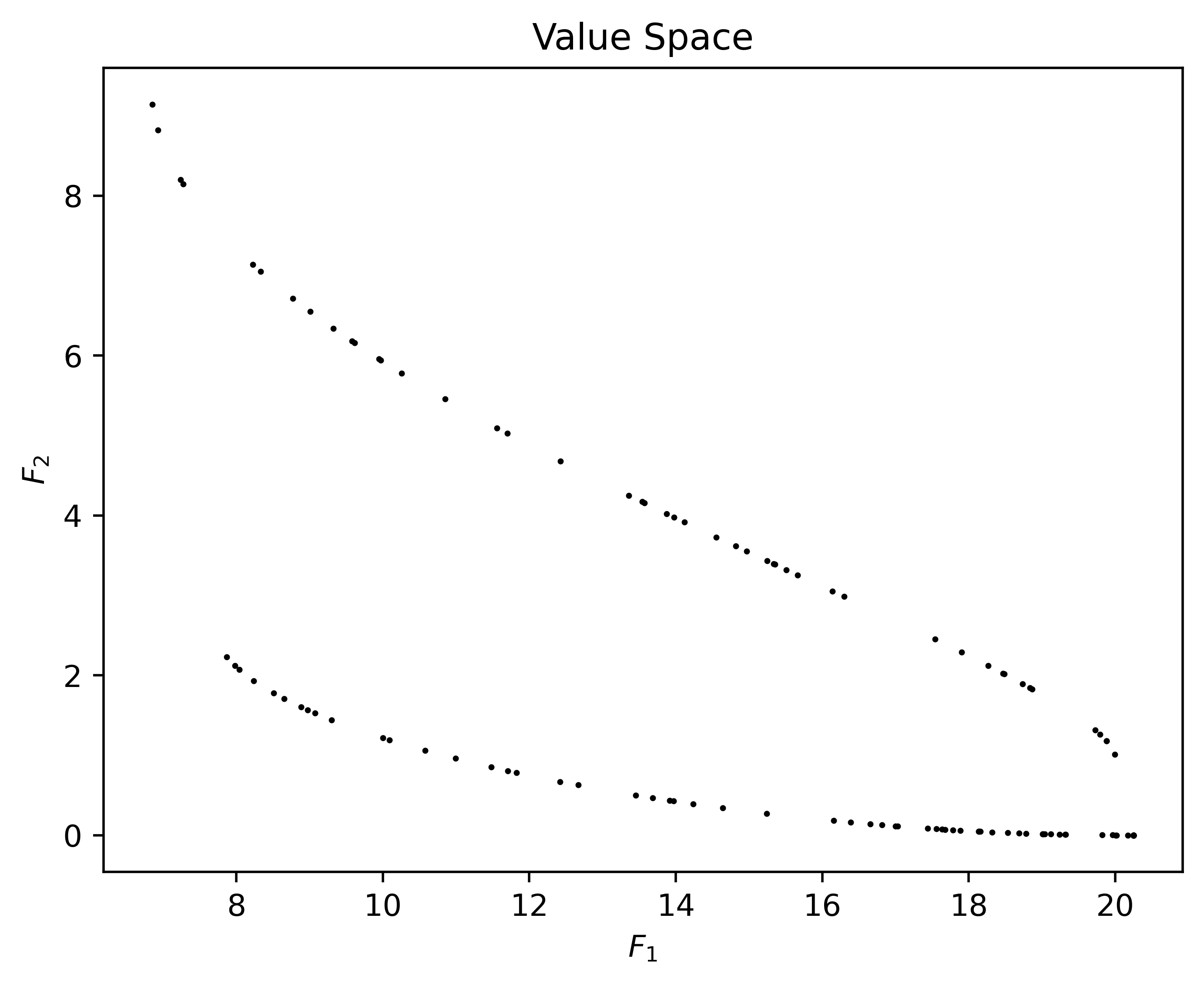}
		\end{minipage}
	}
	\subfigure[BBDMO]
	{
		\begin{minipage}[H]{.22\linewidth}
			\centering
			\includegraphics[scale=0.22]{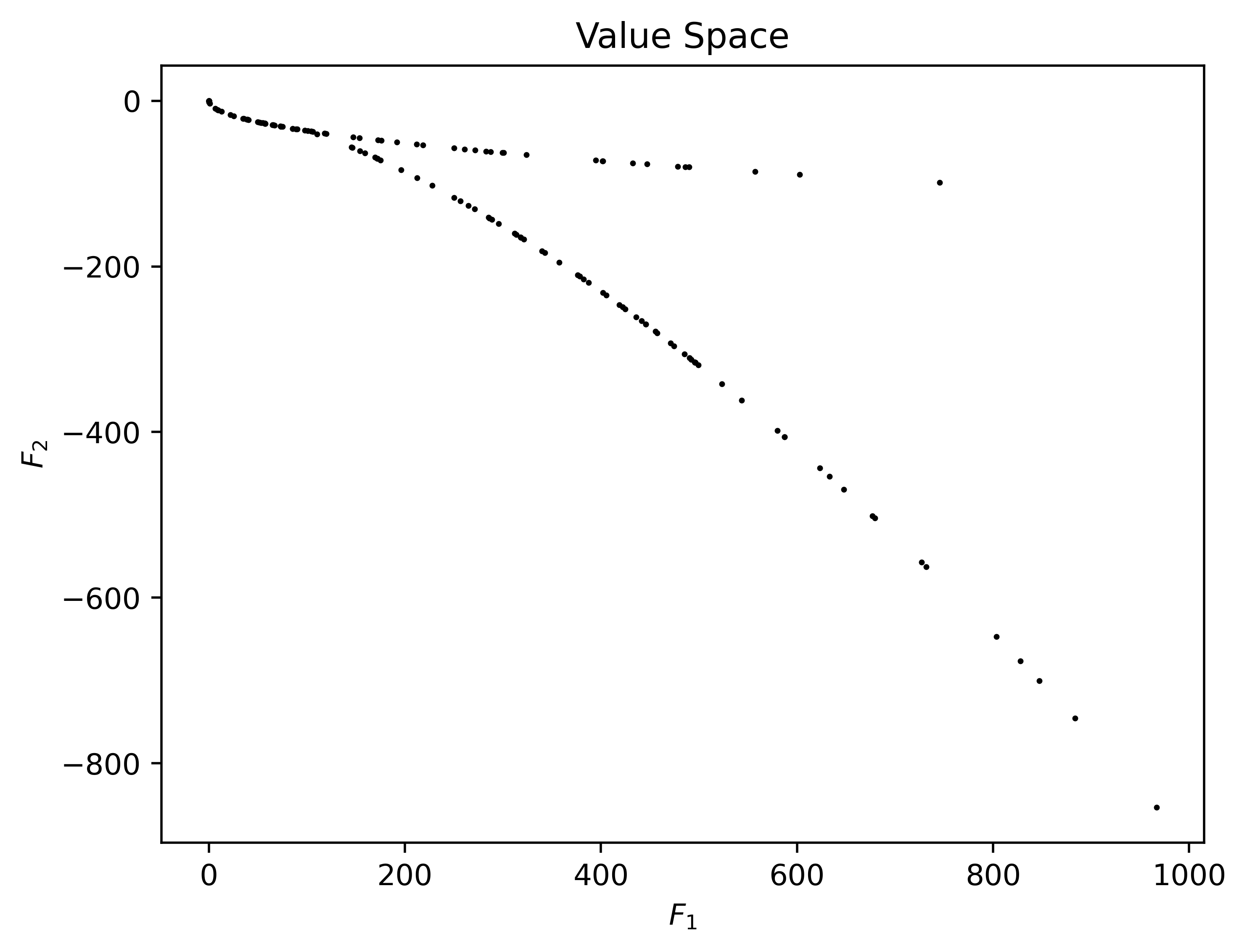} \\
			\includegraphics[scale=0.22]{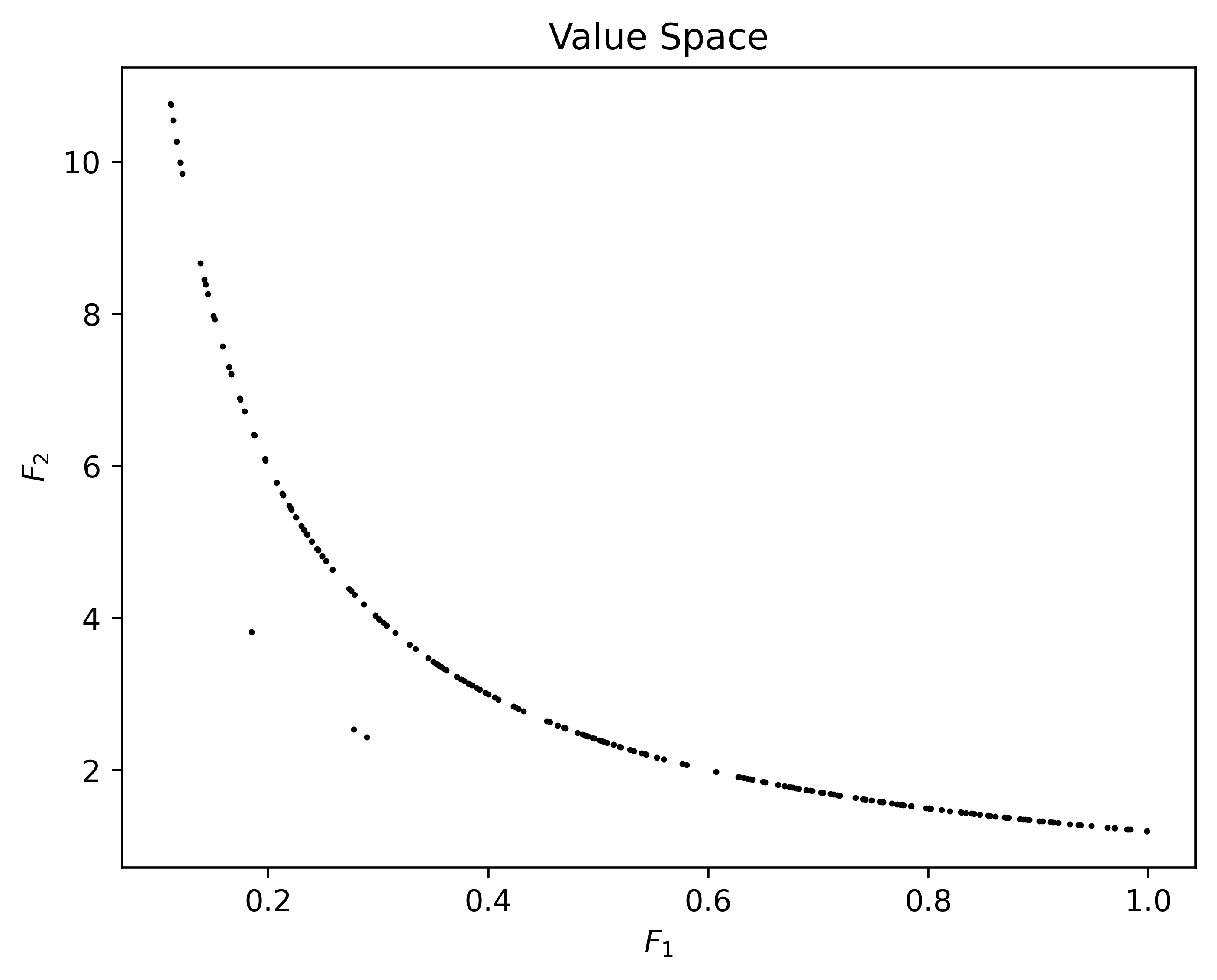}\\
			\includegraphics[scale=0.22]{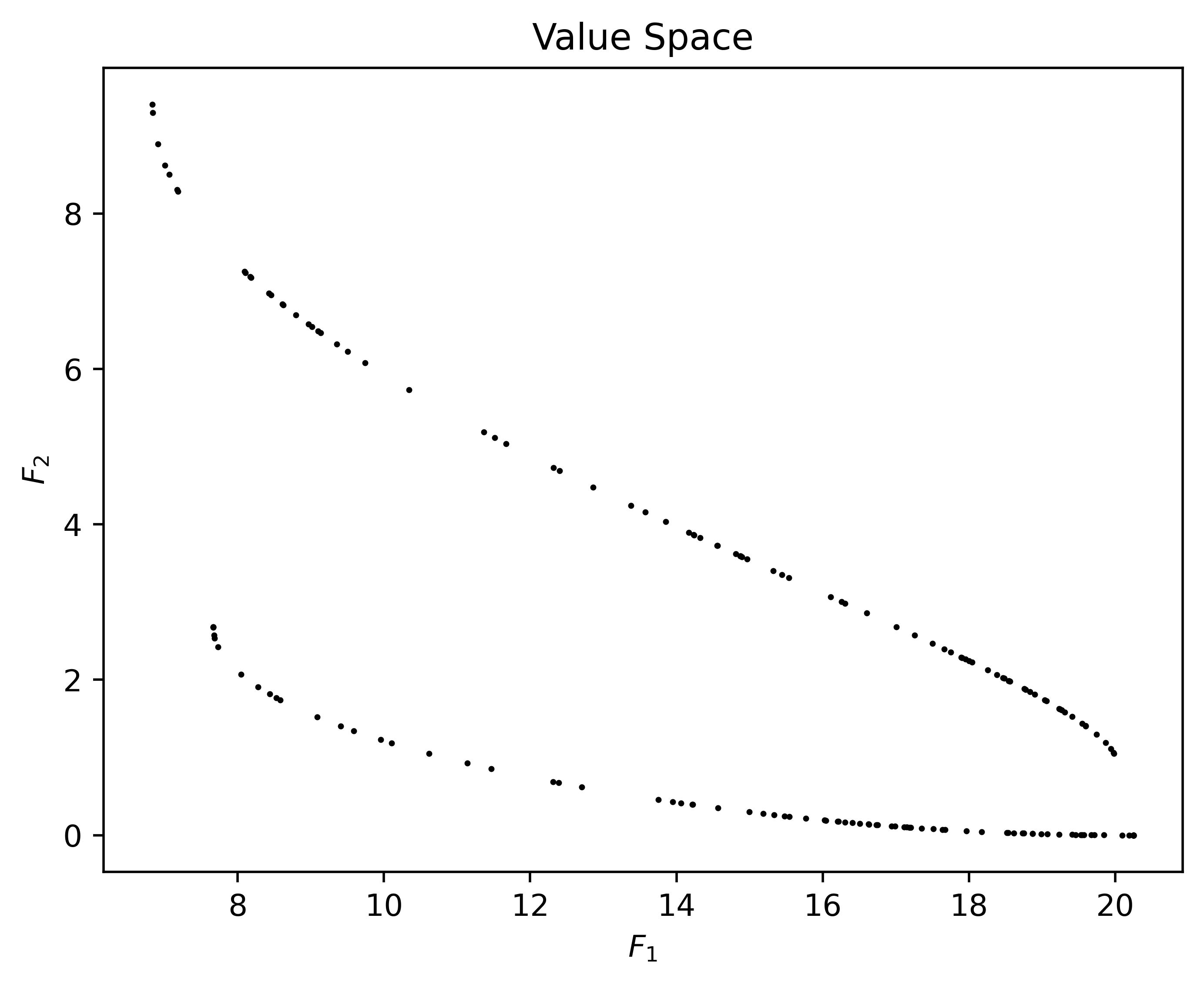}
		\end{minipage}
	}
	\subfigure[BBDMO\_VM]
	{
		\begin{minipage}[H]{.22\linewidth}
			\centering
			\includegraphics[scale=0.22]{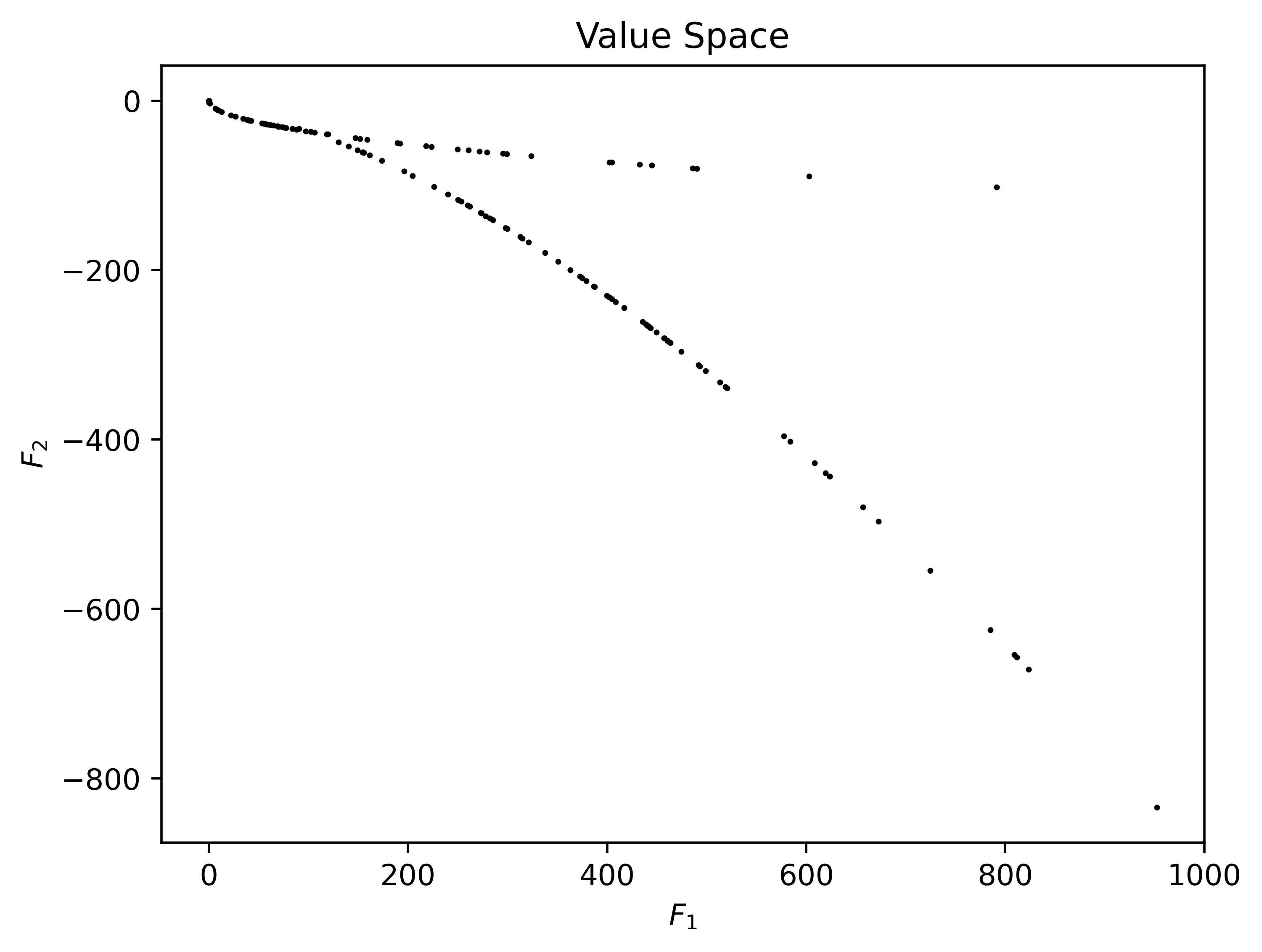} \\
			\includegraphics[scale=0.22]{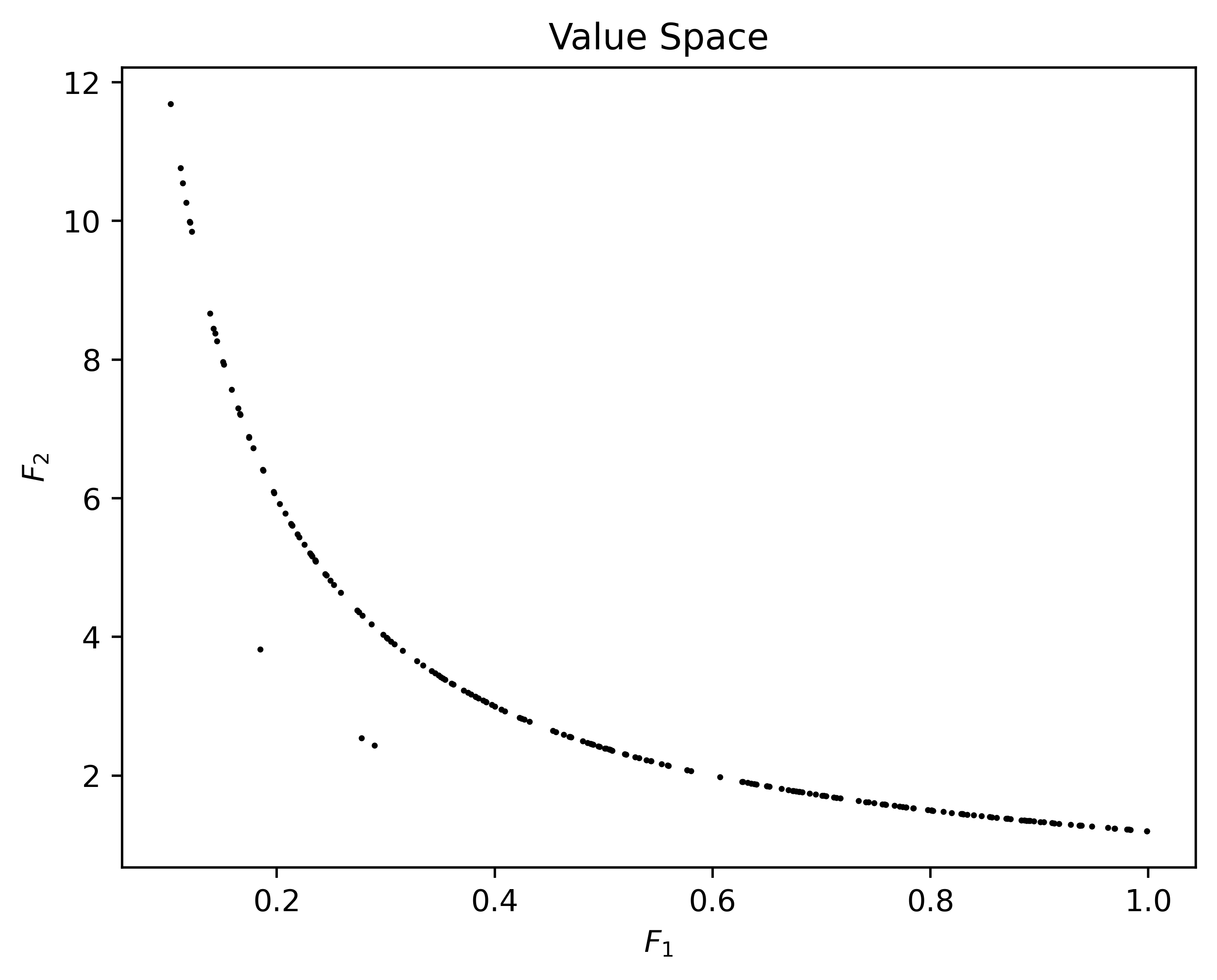}\\
			\includegraphics[scale=0.22]{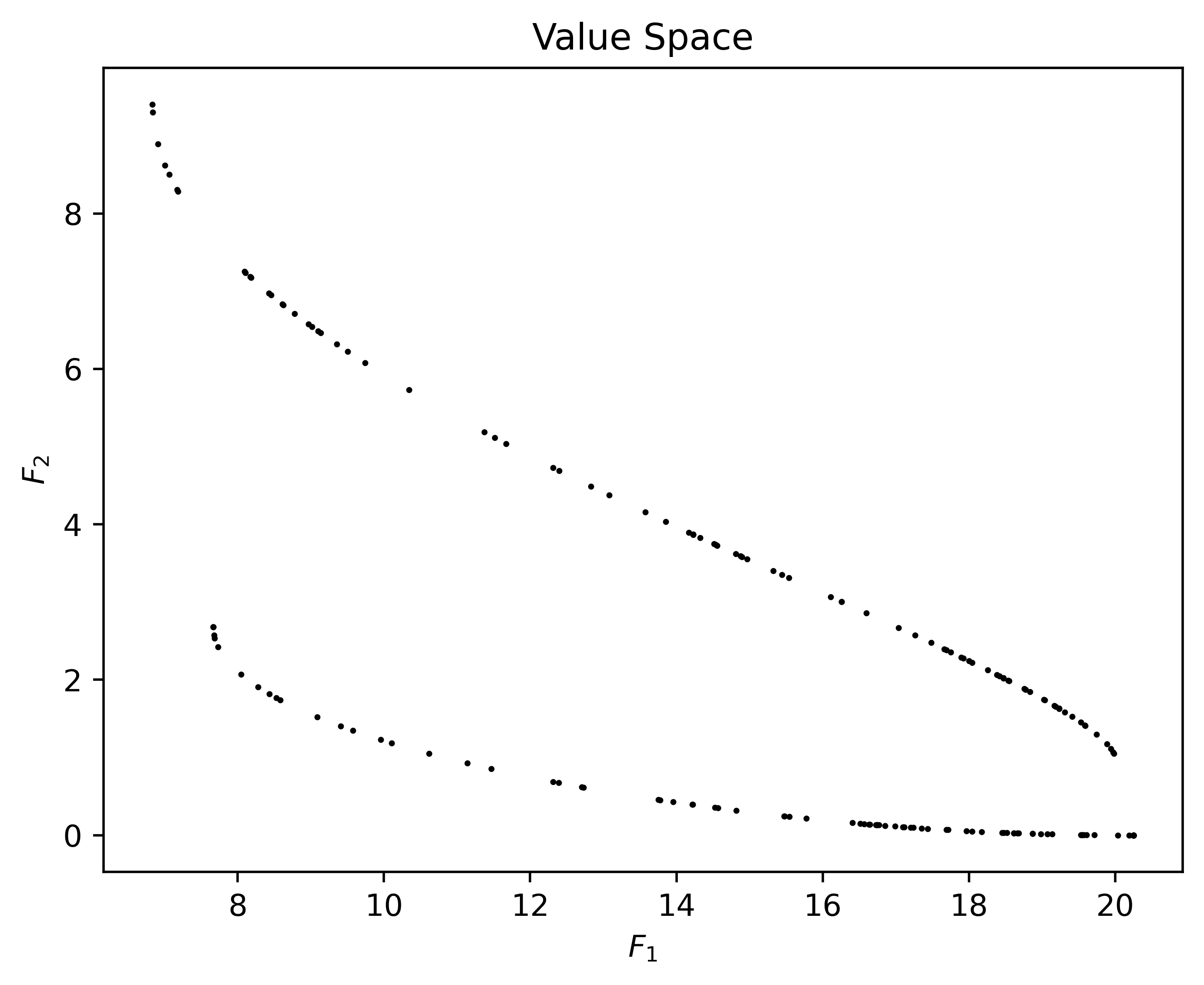}
		\end{minipage}
	}
	\caption{Numerical results in value space for problems DD1, DEB and PNR.}
	\label{f2}
\end{figure}

\begin{figure}[H]
	\centering
	\subfigure[QNMO]
	{
		\begin{minipage}[H]{.22\linewidth}
			\centering
			\includegraphics[scale=0.22]{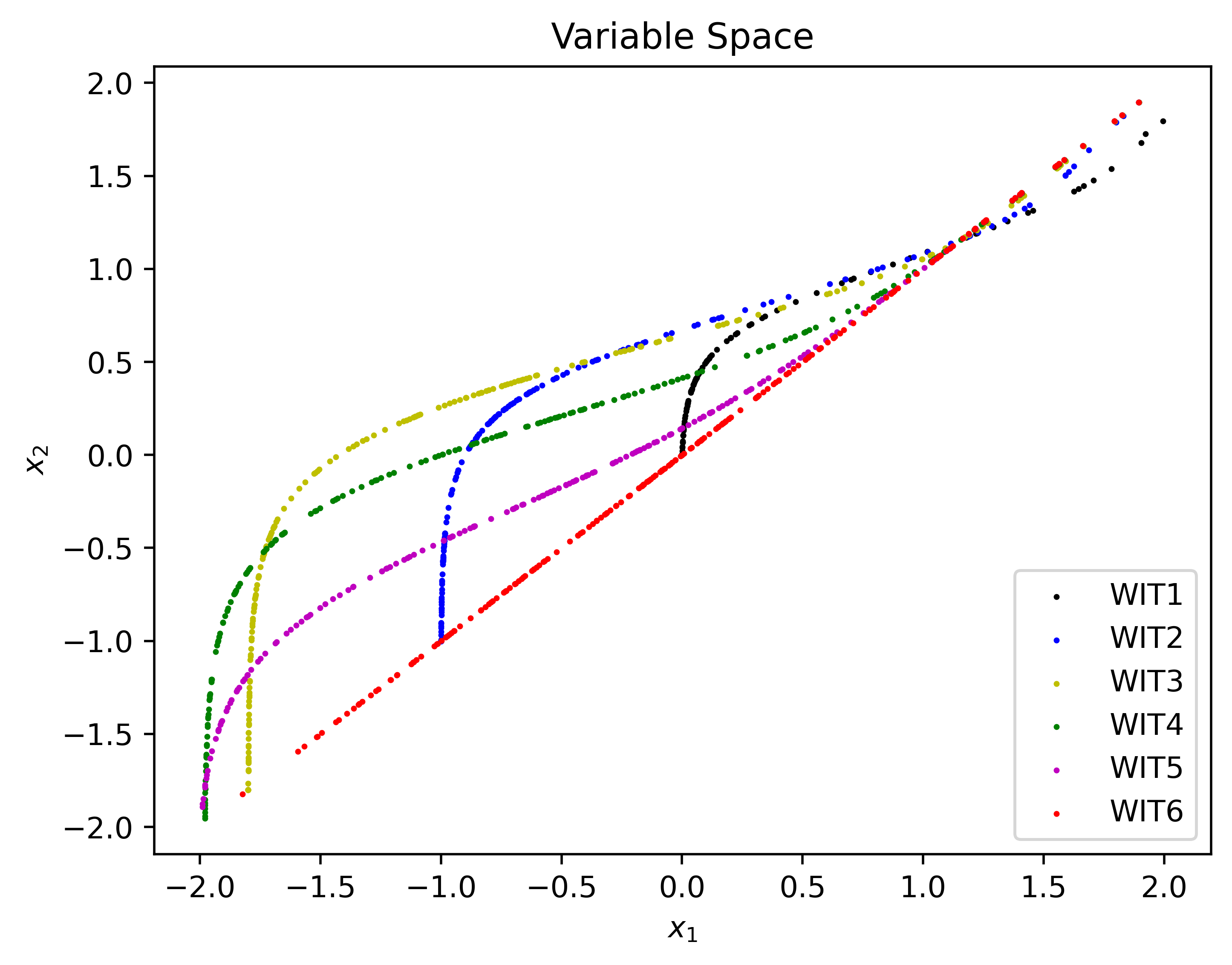} \\
			\includegraphics[scale=0.22]{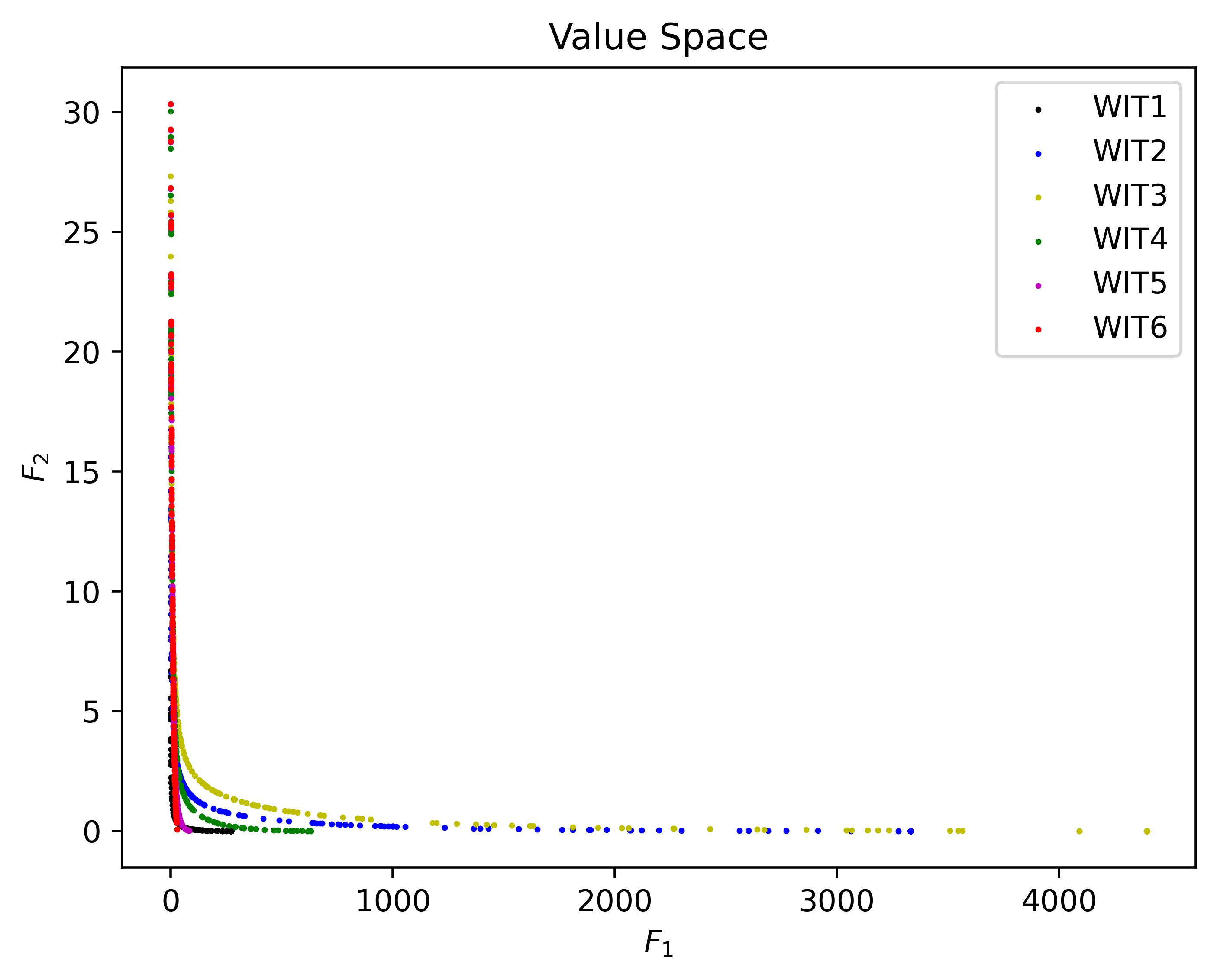} 
		\end{minipage}
	}
	\subfigure[VMMO]
	{
		\begin{minipage}[H]{.22\linewidth}
			\centering
			\includegraphics[scale=0.22]{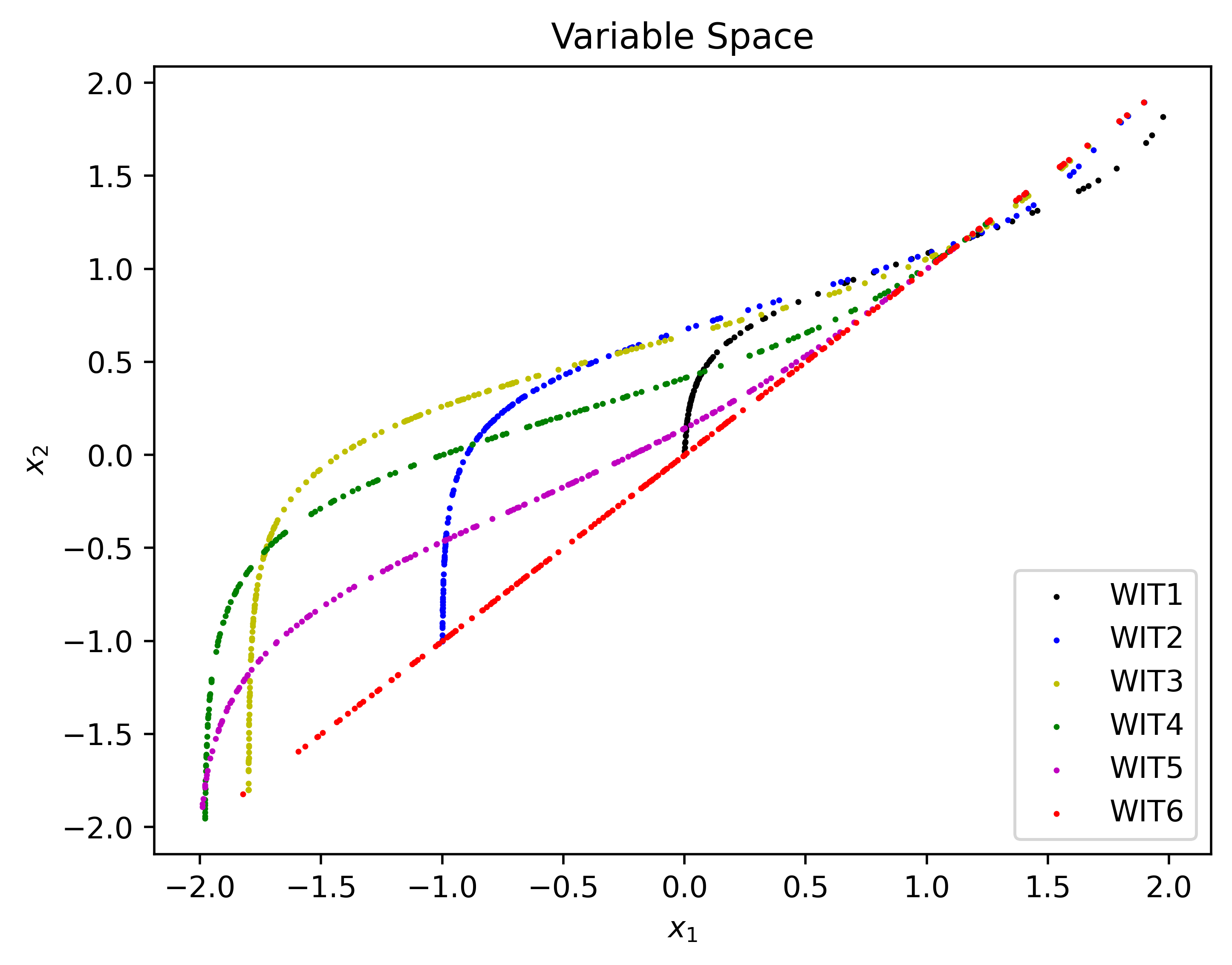} \\
			\includegraphics[scale=0.22]{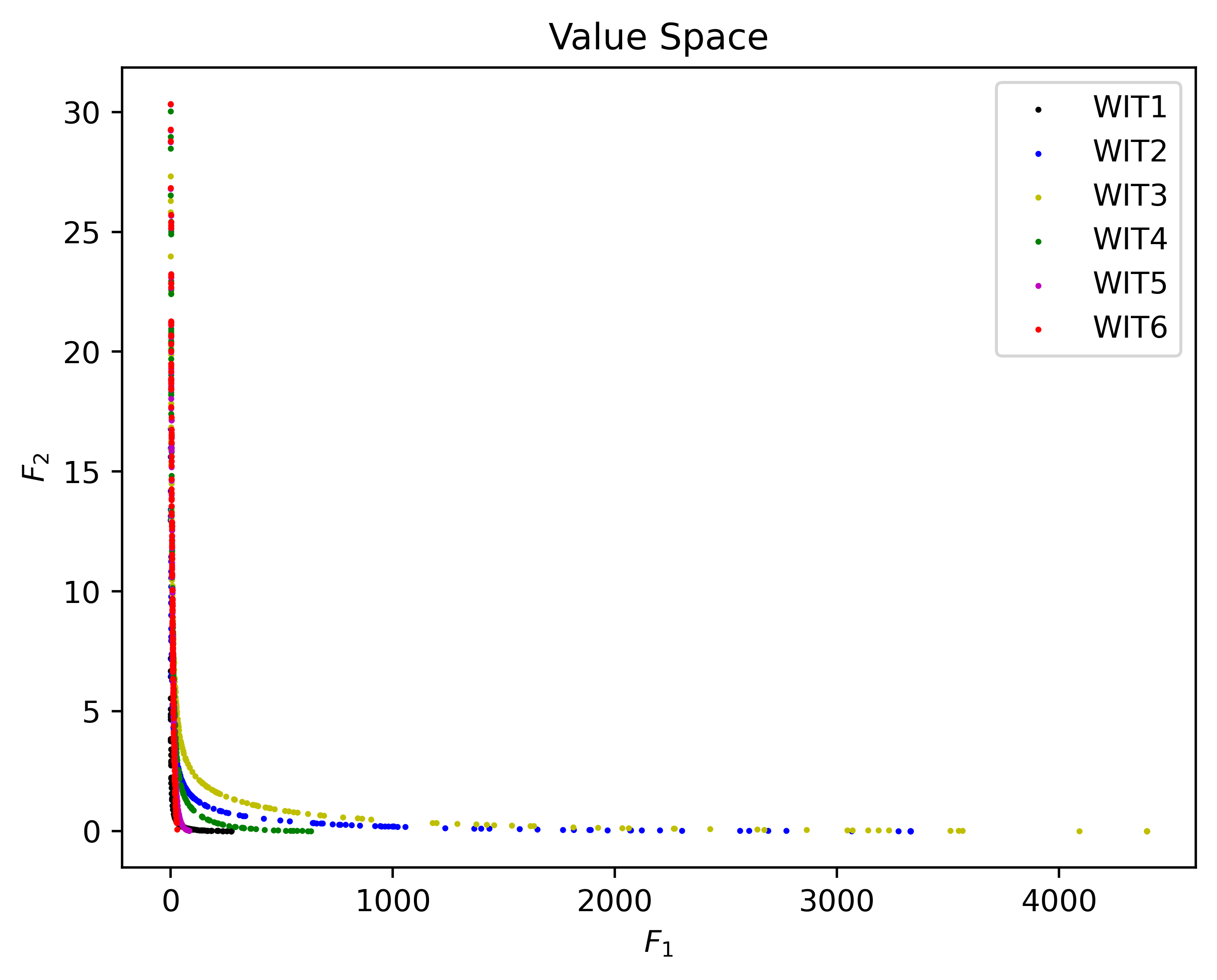} 
		\end{minipage}
	}
	\subfigure[BBDMO]
	{
		\begin{minipage}[H]{.22\linewidth}
			\centering
			\includegraphics[scale=0.22]{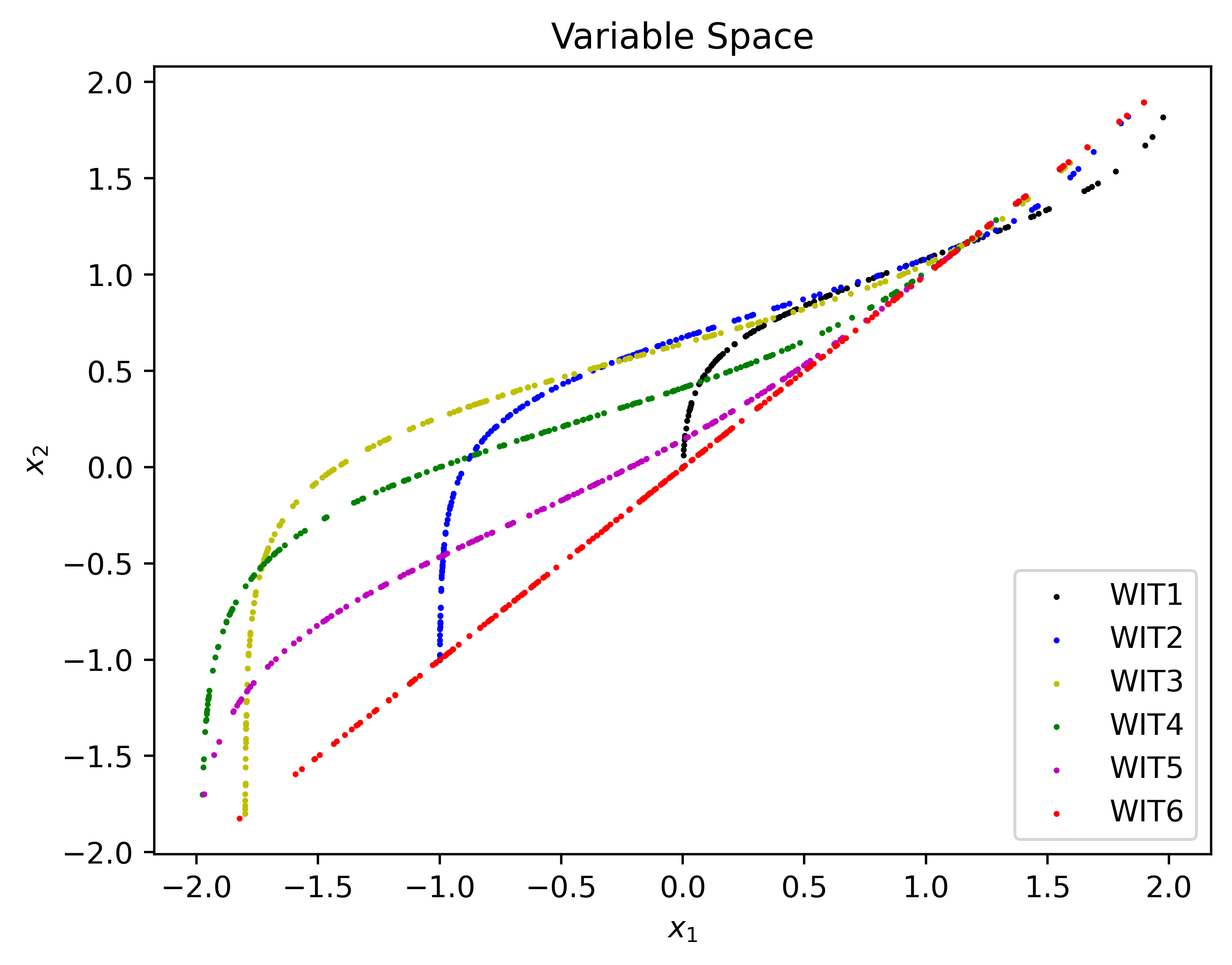} \\
			\includegraphics[scale=0.22]{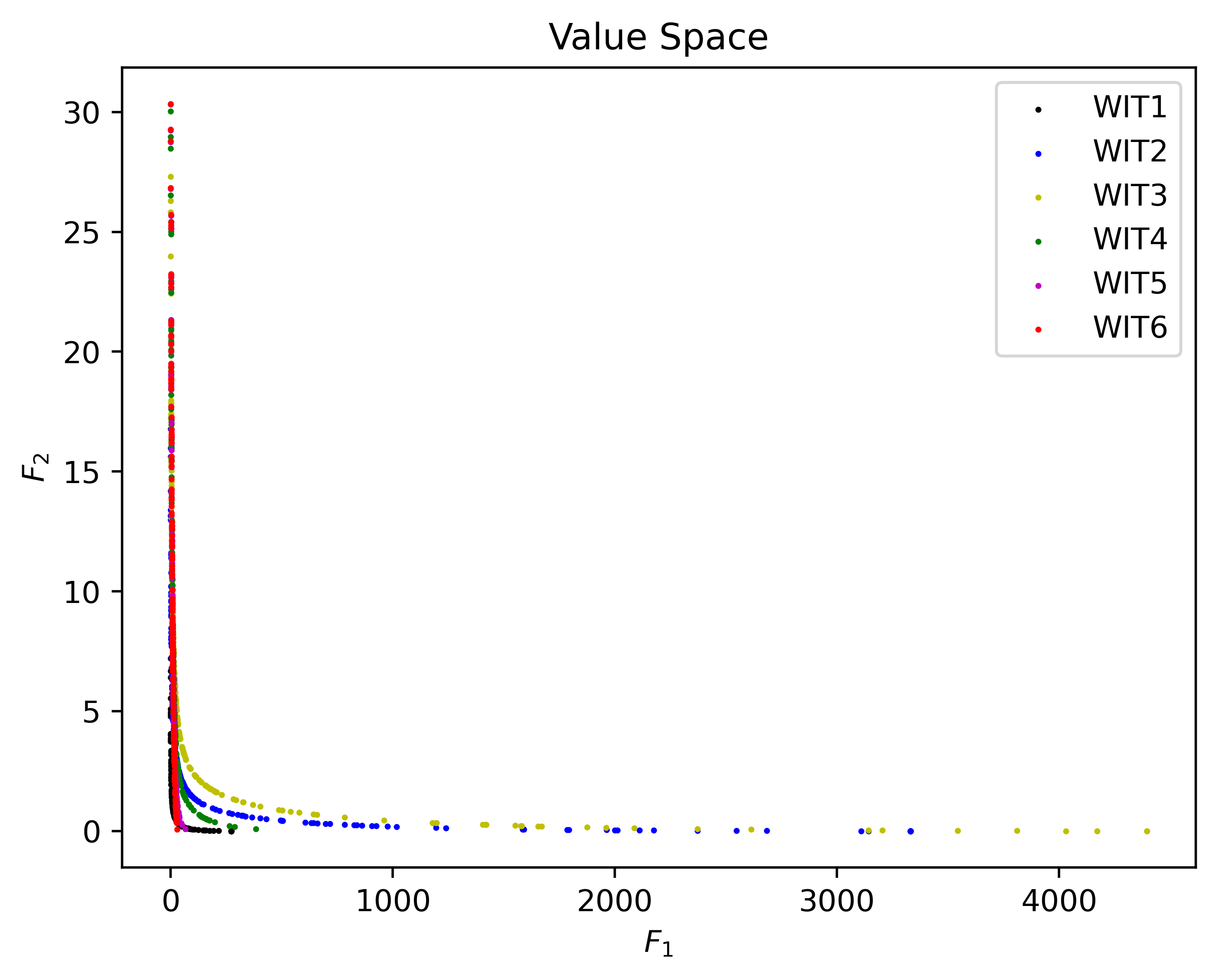} 
			
		\end{minipage}
	}
	\subfigure[BBDMO\_VM]
	{
		\begin{minipage}[H]{.22\linewidth}
			\centering
			\includegraphics[scale=0.22]{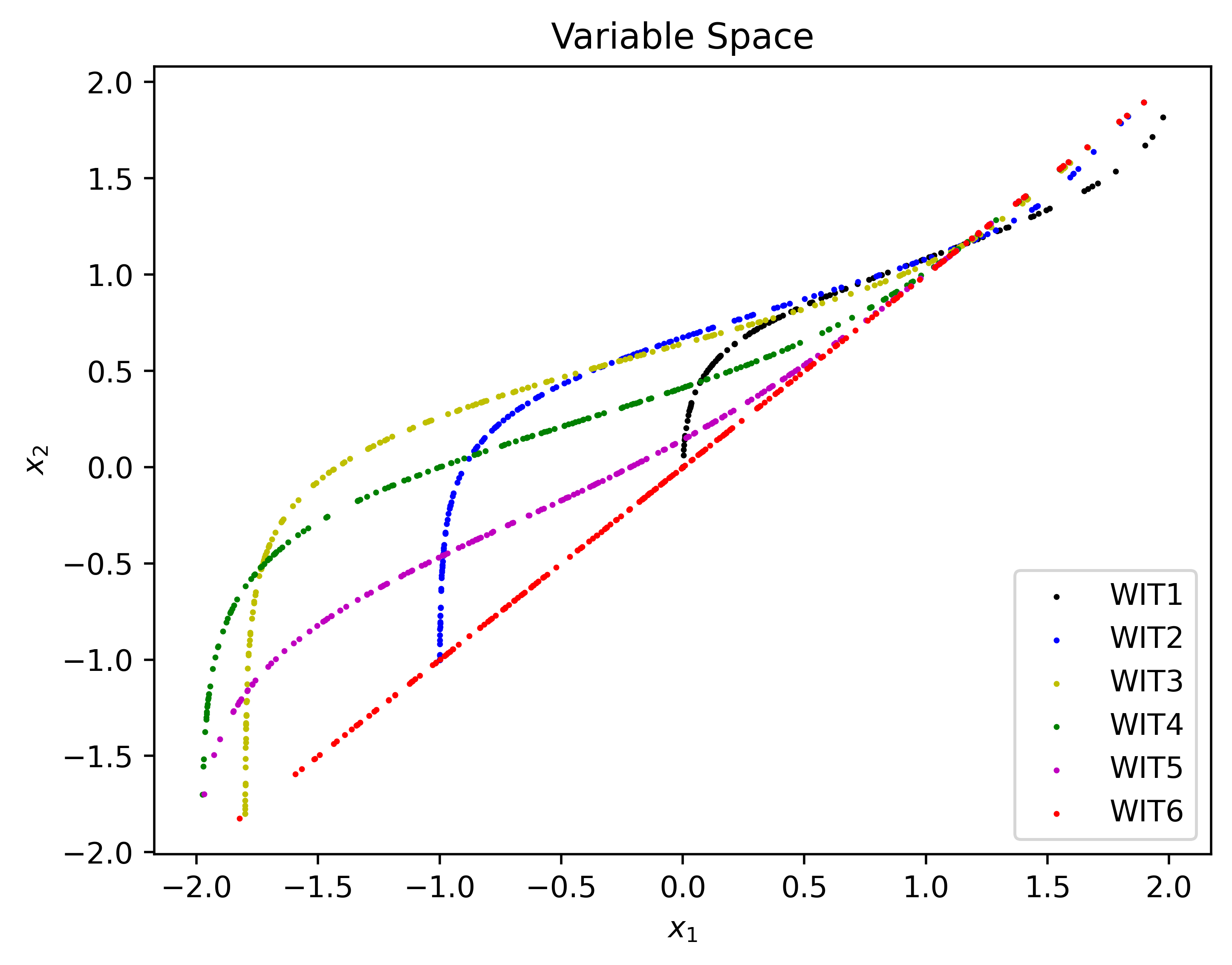}\\
			\includegraphics[scale=0.22]{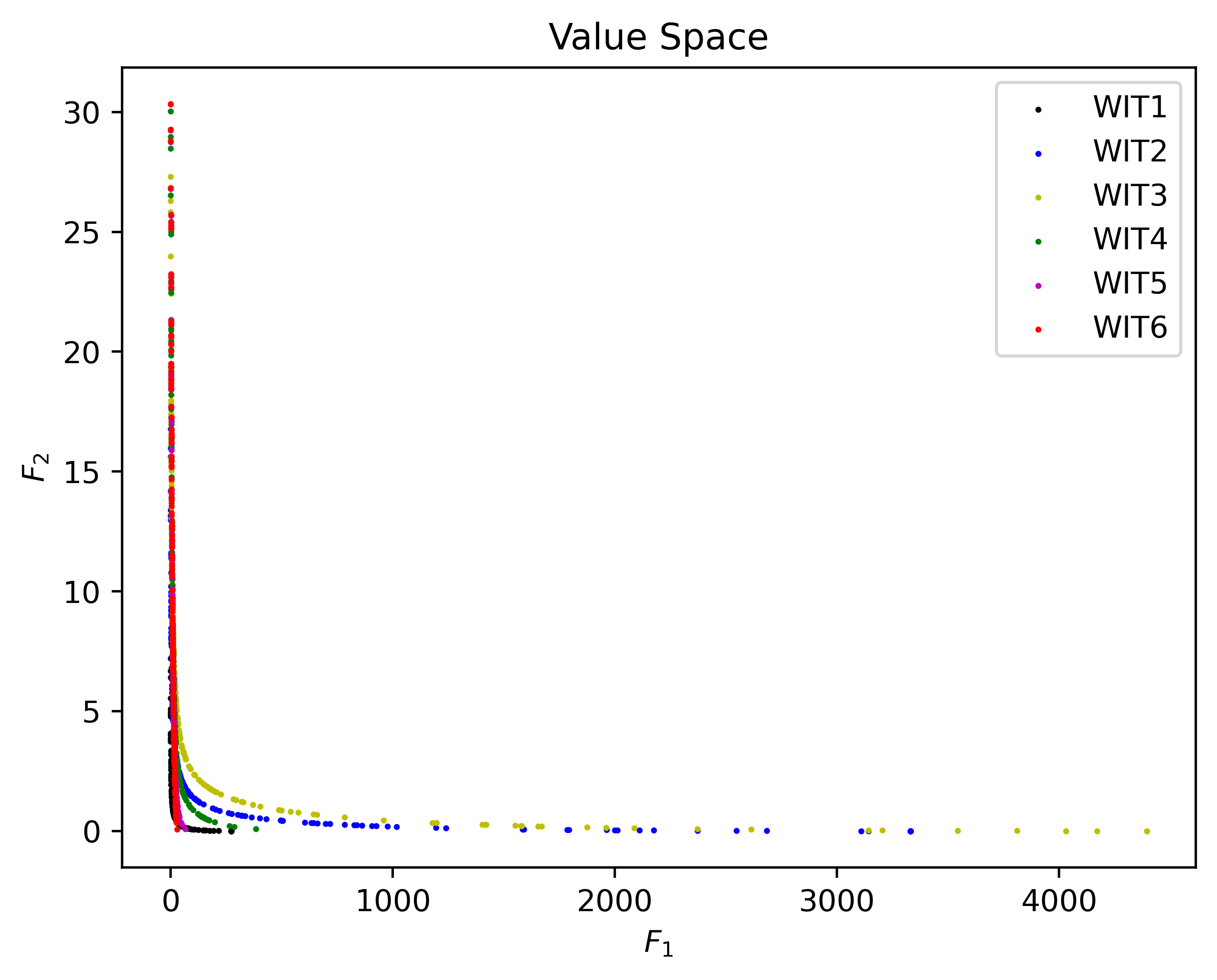} 
		\end{minipage}
	}
	\caption{Numerical results in variable space and value space obtained by the tested algorithms for problems WIT1-6.}
	\label{f0}
\end{figure}

\par For each test problem, the number of average iterations (iter), number of average function evaluations (feval), and average CPU time (time($ms$)) of the different algorithms are listed in Table \ref{tab3}. The problems DD1, Deb, FDS, Imbalance1-2, VU1 and WIT1-2 involve imbalanced objective functions, such as higher-order and exponential functions, leading to poor VMMO performance. In contrast to VMMO, the other methods perform well on these problems, demonstrating their ability to alleviate objectives' imbalances. Nevertheless, BBDMO and BBDMO\_VM require much less CUP time, particularly for high-dimensional problems, than QNMO. The BBDMO and BBDMO\_VM exhibit superior performance for the test problems due to the good conditioning.

\begin{table}[H]
	\centering
	\resizebox{.95\columnwidth}{!}{
	\begin{tabular}{lrrrrrrrrrrrrrrr}
		\hline
		Problem &
		\multicolumn{3}{l}{QNMO} &
		&
		\multicolumn{3}{l}{VMMO} &
		\multicolumn{1}{l}{} &
		\multicolumn{3}{l}{BBDMO} &
		\multicolumn{1}{l}{} &
		\multicolumn{3}{l}{BBDMO\_VM} \\ \cline{2-4} \cline{6-8} \cline{10-16} 
		&
		iter &
		feval &
		time &
		\textbf{} &
		iter &
		feval &
		time &
		&
		iter &
		feval &
		time &
		&
		iter &
		feval &
		time \\ \hline
		BK1        & 1.02 & 2.01 & 3.52 &  & \textbf{1.00} & 2.00 & \textbf{0.31} &  & \textbf{1.00} & \textbf{1.00} & 0.39 &  & \textbf{1.00} & \textbf{1.00} & 0.47 \\
		DD1        & 23.76 & 24.42 & 56.83 &  & 47.62 & 179.97 & 15.72 &  & \textbf{7.49} & \textbf{8.76} & \textbf{1.73} &  & 14.54 & 23.93 & 5.48 \\
		Deb        & 6.20 & 11.10 & 8.97 &  & 65.68 & 370.07 & 26.97 &  & 4.41 & 6.58 & \textbf{1.17} &  & \textbf{4.26} & \textbf{4.69} & 2.05 \\
		Far1       & 35.45 & 37.97 & 41.04 &  & 50.84 & 209.39 & 29.52 &  & 85.16 & 85.64 & 19.23 &  & \textbf{17.12} & \textbf{23.03} & \textbf{7.55} \\
		FDS        & 10.78 & 15.05 & 50.64 &  & 164.97 & 1110.91 & 341.85 &  & \textbf{4.57} & \textbf{5.20} & \textbf{4.78} &  & 4.89 & 5.39 & 6.44 \\
		FF1        & 7.35 & 7.36 & 9.90 &  & 18.03 & 62.27 & 5.43 &  & 4.91 & 6.13 & \textbf{1.18} &  & \textbf{4.86} & \textbf{5.82} & 2.04 \\
		Hil1       & 10.25 & 12.90 & 17.05 &  & 15.91 & 61.41 & 6.47 &  & 11.32 & 12.15 & \textbf{2.83} &  & \textbf{7.99} & \textbf{8.68} & 3.23 \\
		Imbalance1 & \textbf{2.51} & 5.14 & 5.98 &  & 54.76 & 229.67 & 16.24 &  & 2.61 & 3.54 & \textbf{0.71} &  & {2.55} & \textbf{3.28} & 1.17 \\
		Imbalance2 & 1.51 & 5.38 & 4.32 &  & 227.67 & 1595.90 & 75.57 &  & \textbf{1.00} & \textbf{1.00} & \textbf{0.42} &  & \textbf{1.00} & \textbf{1.00} & 0.47 \\
		JOS1a      & 2.00 & 2.00 & 25.00 &  & 2.00 & 2.00 & 0.70 &  & \textbf{1.00} & \textbf{1.00} & \textbf{0.47} &  & \textbf{1.00} & \textbf{1.00} & 0.55 \\
		JOS1b      & 2.00 & 2.00 & 50.19 &  & 2.00 & 2.00 & 1.02 &  & \textbf{1.00} & \textbf{1.00} & \textbf{0.48} &  & \textbf{1.00} & \textbf{1.00} & 0.62 \\
		JOS1c      & 2.00 & 2.00 & 83.53 &  & 2.00 & 2.00 & 1.02 &  & \textbf{1.00} & \textbf{1.00} & \textbf{0.47} &  & \textbf{1.00} & \textbf{1.00} & 0.63 \\
		JOS1d      & 2.00 & 2.00 & 124.60 &  & 2.00 & 2.00 & 1.10 &  & \textbf{1.00} & \textbf{1.00} & \textbf{0.47} &  & \textbf{1.00} & \textbf{1.00} & 0.63 \\
		LE1        & 9.90 & 42.15 & 14.62 &  & 10.41 & 25.97 & 2.97 &  & 4.55 & 7.03 & \textbf{1.22} &  & \textbf{4.52} & \textbf{6.53} & 1.88 \\
		PNR        & \textbf{2.59} & 4.61 & 5.48 &  & 7.81 & 19.61 & 2.36 &  & {4.18} & 4.74 & \textbf{1.02} &  & 4.23 & \textbf{4.57} & 1.74 \\
		VU1        & 64.85 & 66.37 & 71.74 &  & 144.66 & 817.75 & 46.72 &  & 13.99 & 14.04 & \textbf{3.60} &  & \textbf{11.85} & \textbf{12.44} & 4.86 \\
		WIT1       & \textbf{2.60} & 5.47 & 5.77 &  & 50.76 & 263.04 & 19.40 &  & 3.53 & 3.62 & \textbf{0.90} &  & {3.44} & \textbf{3.53} & 1.42 \\
		WIT2       & \textbf{3.87} & 8.49 & 8.11 &  & 76.28 & 382.81 & 29.61 &  & 3.98 & 4.08 & \textbf{1.13} &  & {3.89} & \textbf{4.00} & 1.57 \\
		WIT3       & \textbf{3.89} & 7.07 & 8.06 &  & 35.27 & 130.63 & 10.97 &  & 5.09 & 5.18 & \textbf{1.28} &  & {4.96} & \textbf{5.04} & 2.03 \\
		WIT4       & \textbf{3.03} & \textbf{4.31} & 5.75 &  & 7.24 & 13.20 & 2.04 &  & 5.26 & 5.31 & \textbf{1.33} &  & {5.16} & {5.21} & 2.11 \\
		WIT5       & \textbf{2.99} & \textbf{4.01} & 5.60 &  & 5.37 & 8.57 & 1.33 &  & 4.37 & 4.39 & \textbf{1.18} &  & {4.34} & {4.36} & 1.78 \\
		WIT6       & 1.06 & 2.00 & 2.90 &  & \textbf{1.00} & 2.00 & \textbf{0.34} &  & \textbf{1.00} & \textbf{1.00} & 0.39 &  & \textbf{1.00} & \textbf{1.00} & 0.54 \\ \hline
	\end{tabular}
}\caption{Number of average iterations (iter), number of average function evaluations (feval), and average CPU time (time($ms$)) of QNMO, VMMO, BBDMO, and BBDMO\_VM implemented on different test problems.}\label{tab3}
\end{table}

\subsection{Quadratic ill-conditioned problems}
In this subsection, we test the algorithm on ill-conditioned problems. We consider a series of quadratic problems defined as follows:
$$F_{i}(x)=\frac{1}{2}\left\langle x,A_{i}x\right\rangle + \left\langle b_{i},x\right\rangle,~i=1,2,$$
where $A_{i}$ is a positive definite matrix. We set $A_{i}=H_{i}D_{i}H_{i}^{T}$, where $H_{i}$ is a random
orthogonal matrix and $D_{i}=Diag(d^{1}_{i},d^{2}_{i},...,d^{n}_{i})$ with $\max_{j}d^{j}_{i}/\min_{j}d^{j}_{i}=\kappa_{i}$. The problem illustration is given in Table \ref{tab2}. The second and third columns present the objective functions' dimension and condition numbers, respectively. While $x_L$ and $x_U$ represent the lower and upper bounds of the variables, respectively. 
\begin{table}[h]
	\centering
	\resizebox{.65\columnwidth}{!}{
		\begin{tabular}{llllll}
			\hline
			Problem   & $n$   & $(\kappa_{1},\kappa_{2})$ & $x_{L}$                & $x_{U}$               \\ \hline
			QPa & 10   & $(10,10)$ & 10{[}-1,...,-1{]}       & 10{[}1,...,1{]}            \\
			QPb & 10   & $(10^{2},10^{2})$ & 10{[}-1,...,-1{]}      & 10{[}1,...,1{]}           \\
			QPc & 100  & $(10^{2},10^{2})$ & 100{[}-1,...,-1{]}    & 100{[}1,...,1{]}        \\
			QPd & 100 & $(10^{3},10^{3})$ & 100{[}-1,...,-1{]}    & 100{[}1,...,1{]}         \\
			QPe & 500 & $(10^{3},10^{3})$ & 500{[}-1,...,-1{]}  & 500{[}1,...,1{]}    \\
			QPf & 500 & $(10^{4},10^{4})$ & 500{[}-1,...,-1{]} & 500{[}1,...,1{]}    \\
			QPg & 100 & $(10^{5},10^{2})$ & 100{[}-1,...,-1{]} & 100{[}1,...,1{]}    \\ \hline
		\end{tabular}	
	}
	\caption{Description of quadratic problems.}
	\label{tab2}
\end{table}

\begin{figure}[H]
	\centering
	\subfigure[QNMO]
	{
		\begin{minipage}[H]{.3\linewidth}
			\centering
			\includegraphics[scale=0.3]{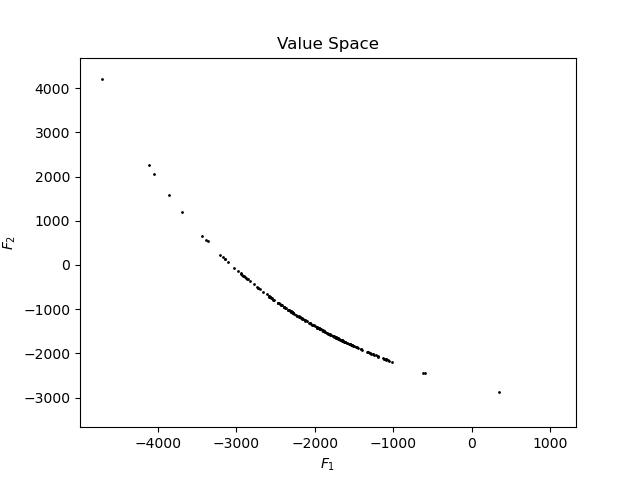} 
		\end{minipage}
	}
	\subfigure[VMMO]
	{
		\begin{minipage}[H]{.3\linewidth}
			\centering
			\includegraphics[scale=0.3]{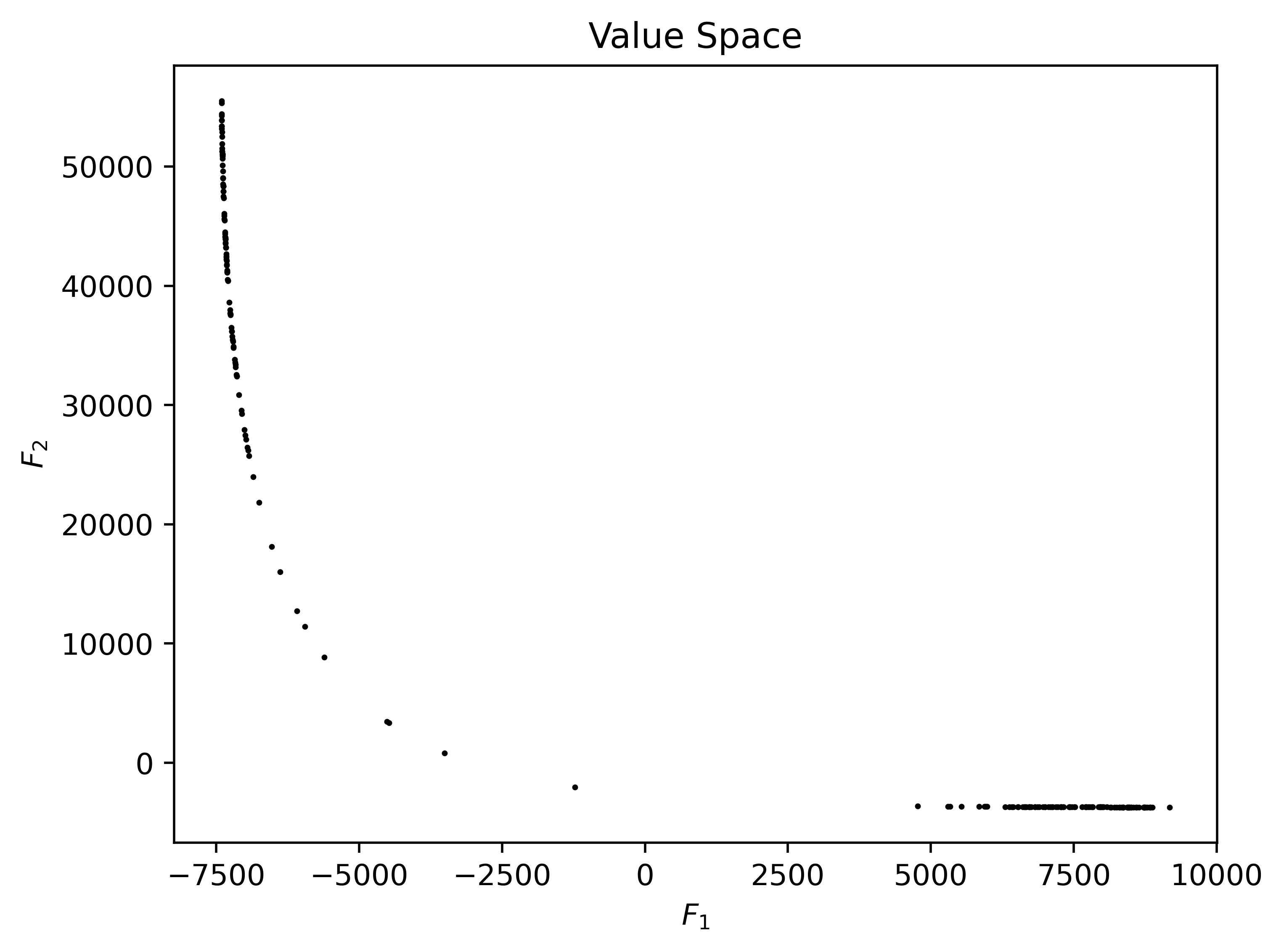}
		\end{minipage}
	}\\
	\subfigure[BBDMO]
	{
		\begin{minipage}[H]{.3\linewidth}
			\centering
			\includegraphics[scale=0.3]{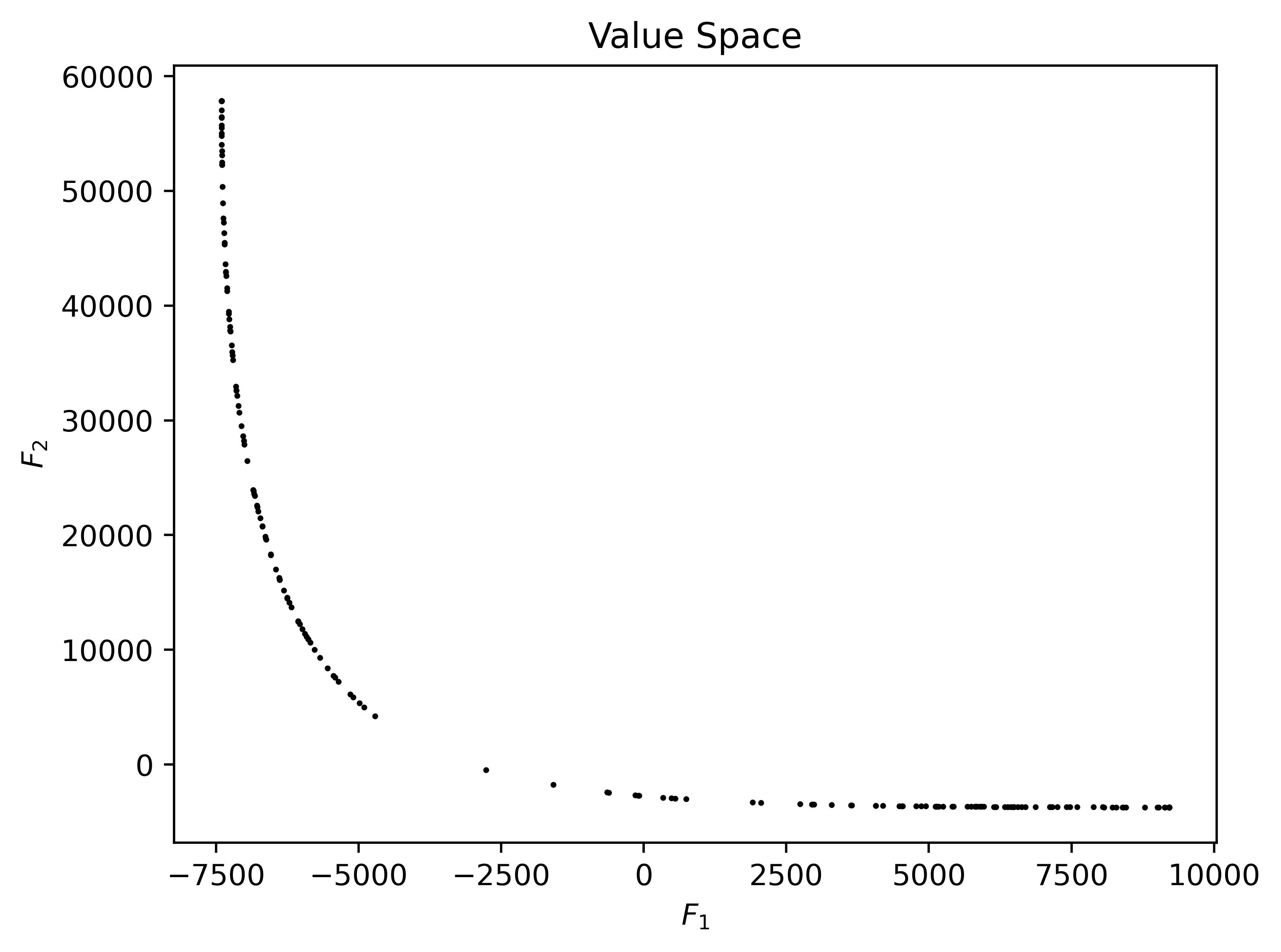} 
		\end{minipage}
	}
	\subfigure[BBDMO\_VM]
	{
		\begin{minipage}[H]{.3\linewidth}
			\centering
			\includegraphics[scale=0.3]{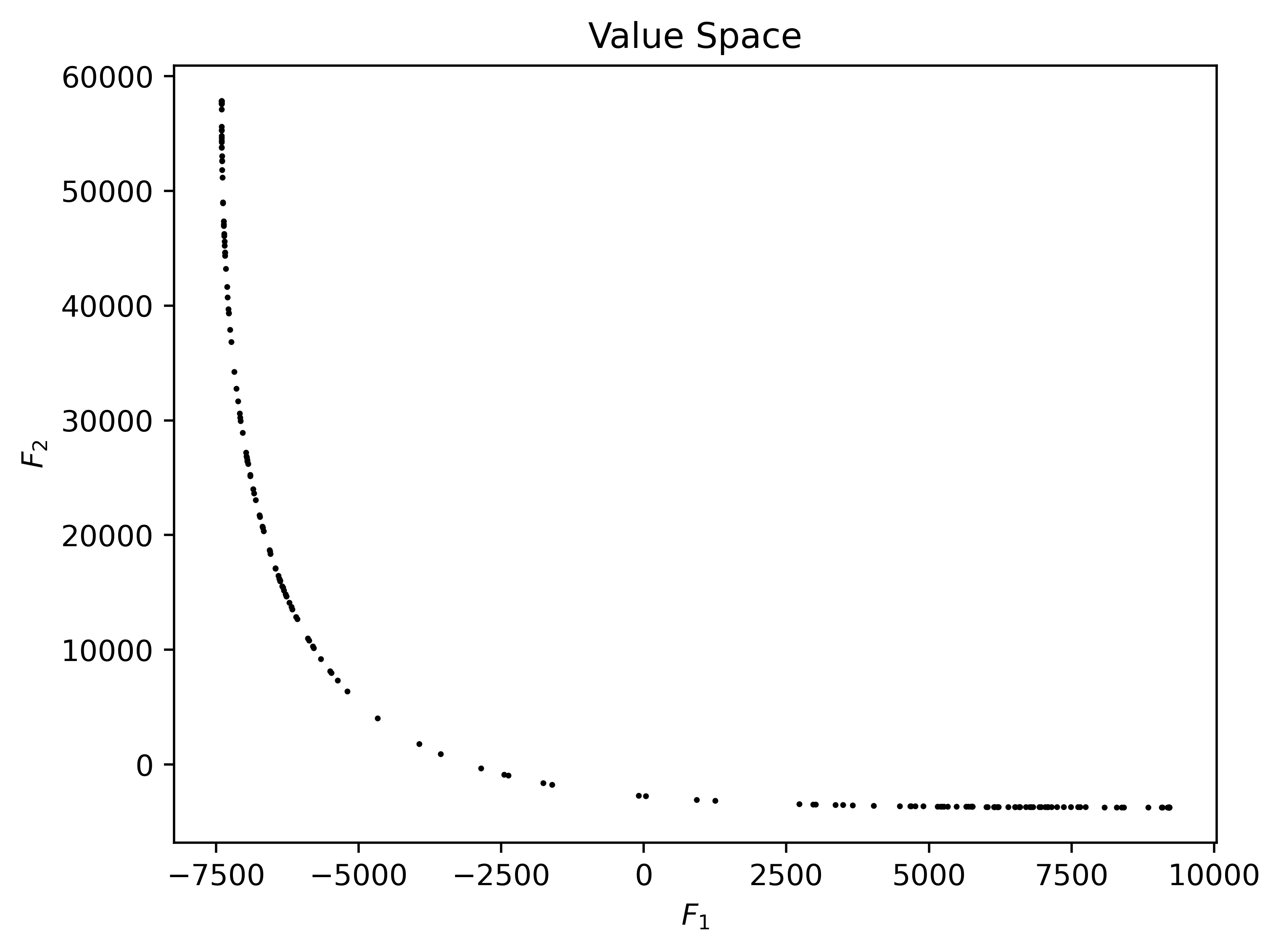} 
		\end{minipage}
	}
	\caption{Numerical results in value space for problem \textbf{QPc}.}
	\label{f3}
\end{figure}

\begin{figure}[H]
	\centering
	\subfigure[QPd]
	{
		\begin{minipage}[H]{.22\linewidth}
			\centering
			\includegraphics[scale=0.22]{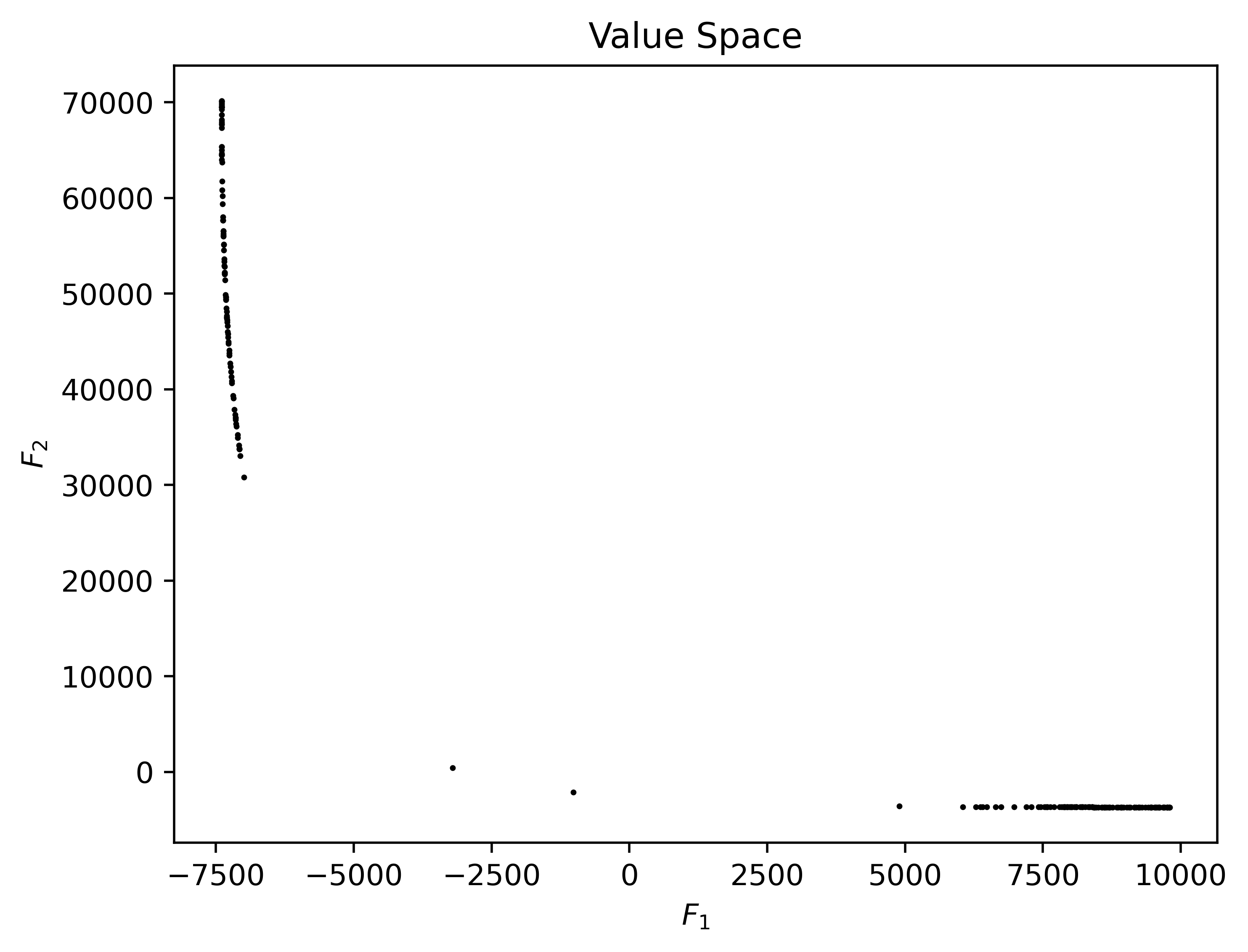} \\
			\includegraphics[scale=0.22]{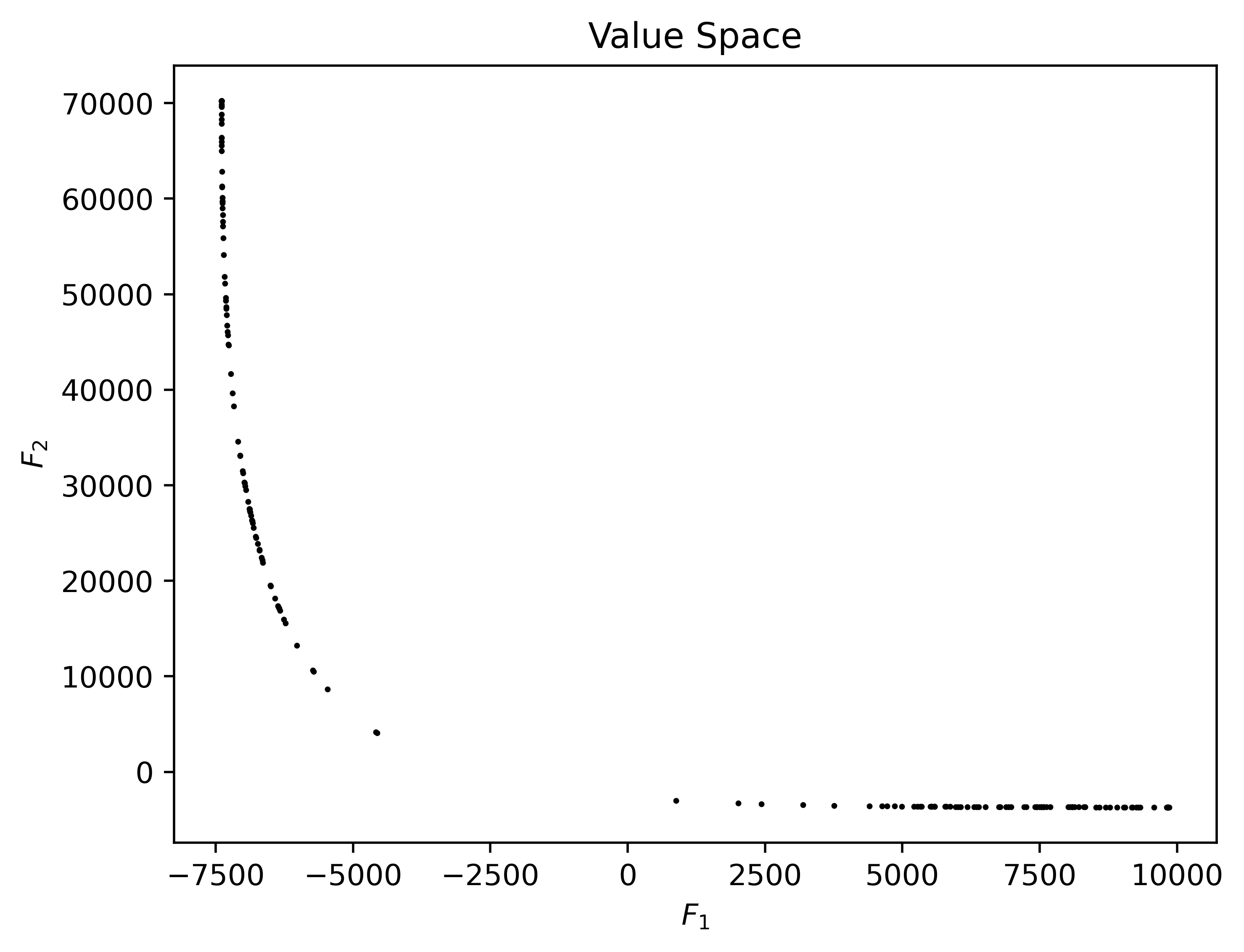} \\
			\includegraphics[scale=0.22]{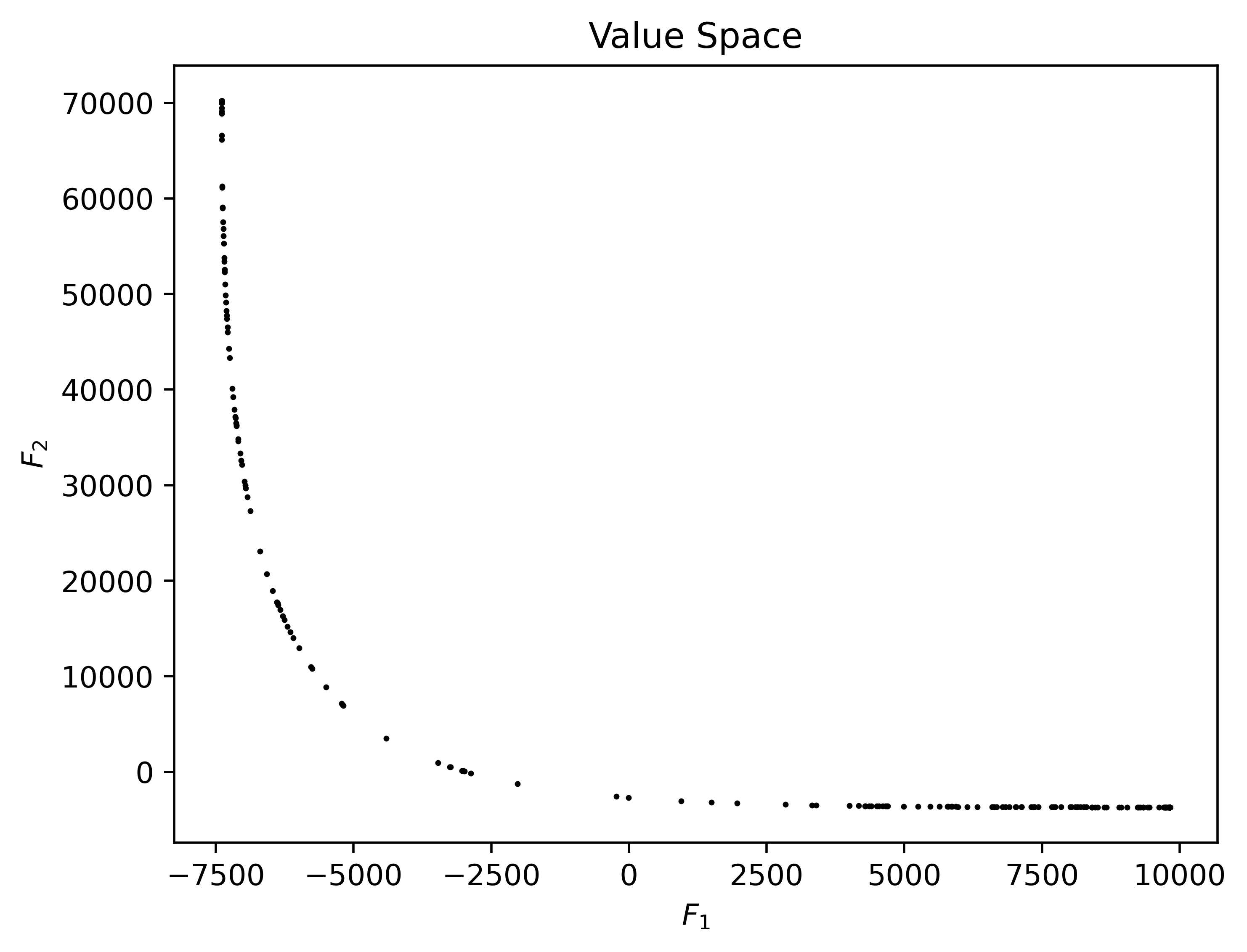}
		\end{minipage}
	}
	\subfigure[QPe]
	{
		\begin{minipage}[H]{.22\linewidth}
			\centering
			\includegraphics[scale=0.22]{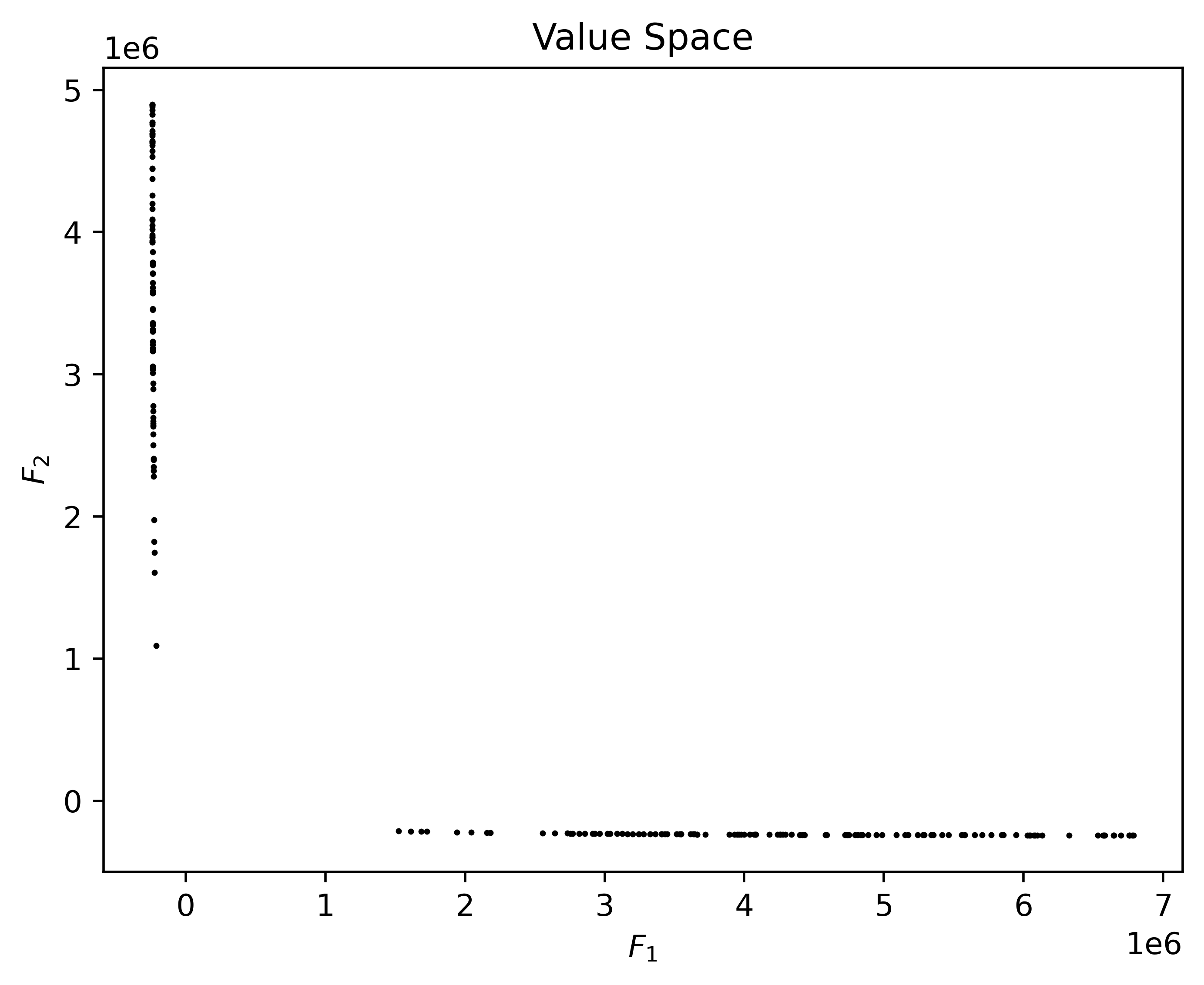} \\
			\includegraphics[scale=0.22]{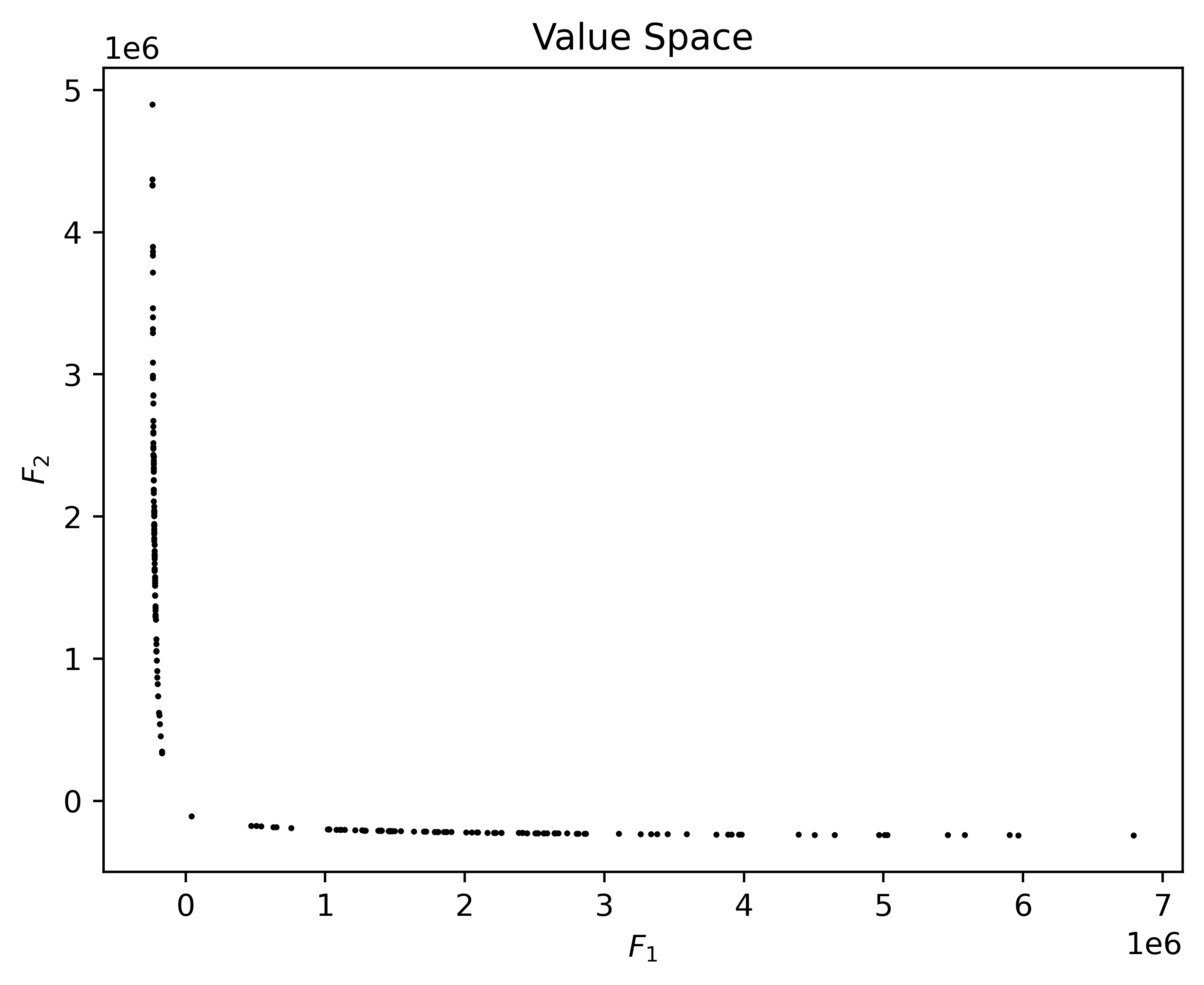} \\
			\includegraphics[scale=0.22]{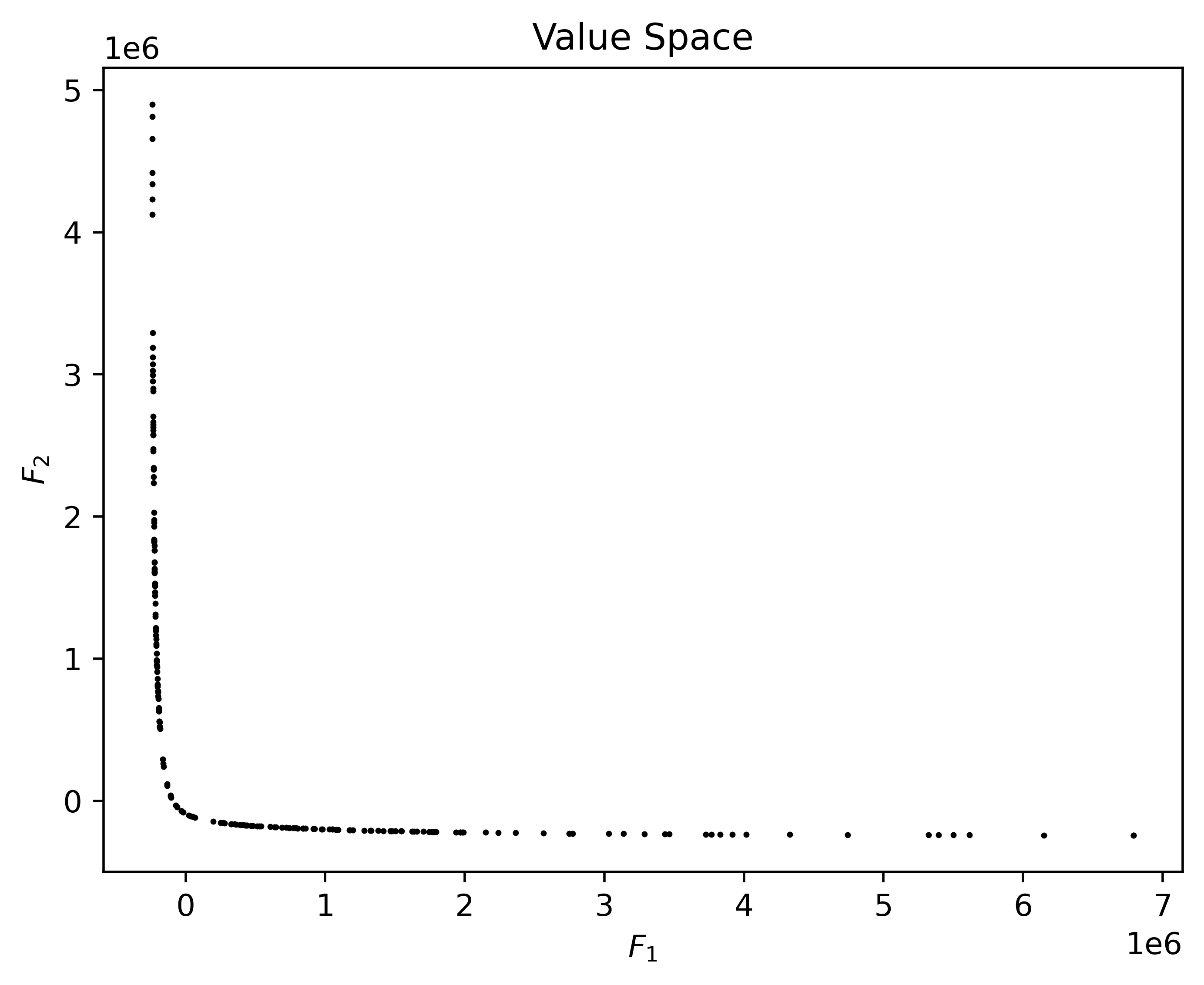}
		\end{minipage}
	}
	\subfigure[QPf]
	{
		\begin{minipage}[H]{.22\linewidth}
			\centering
			\includegraphics[scale=0.22]{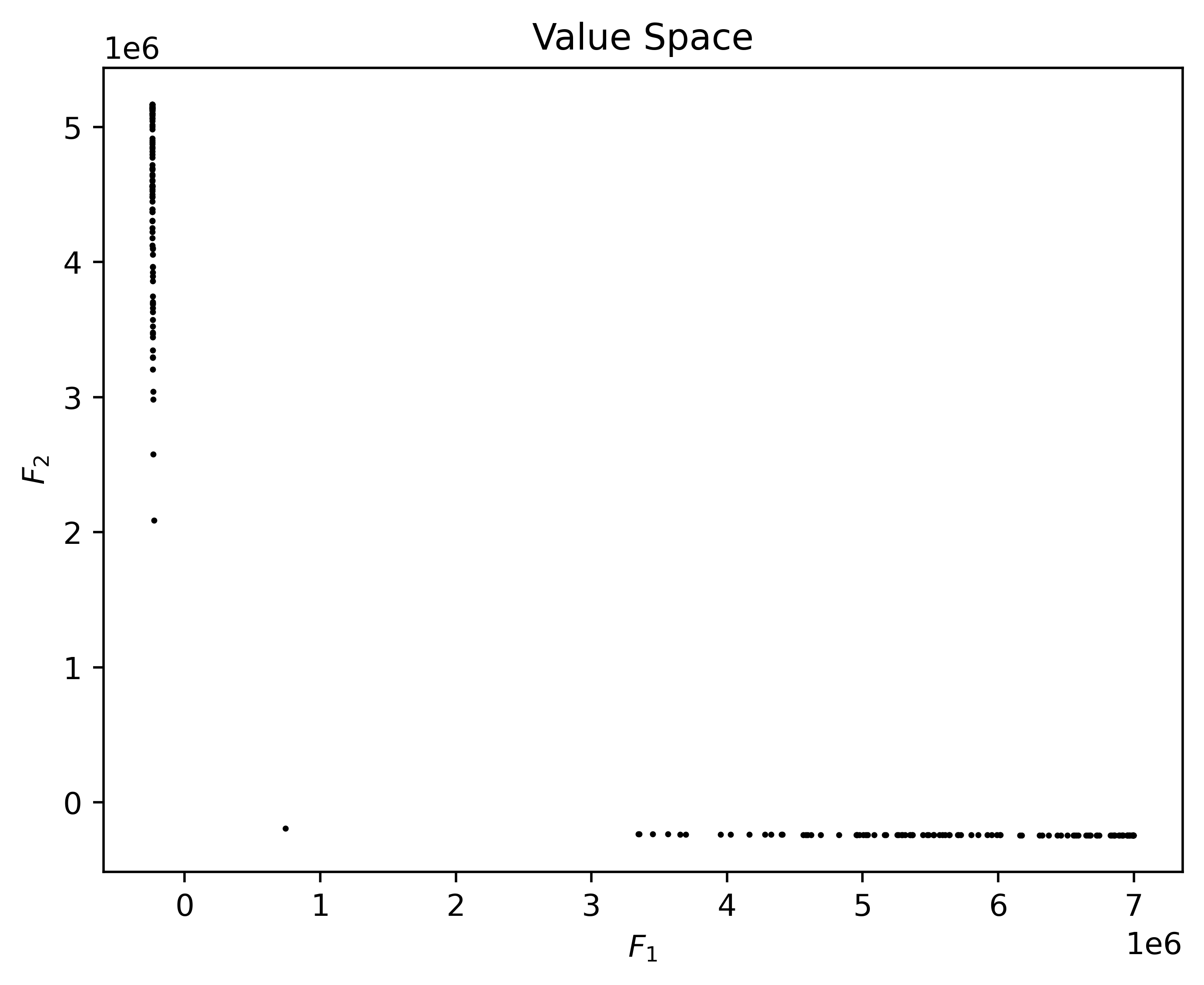} \\
			\includegraphics[scale=0.22]{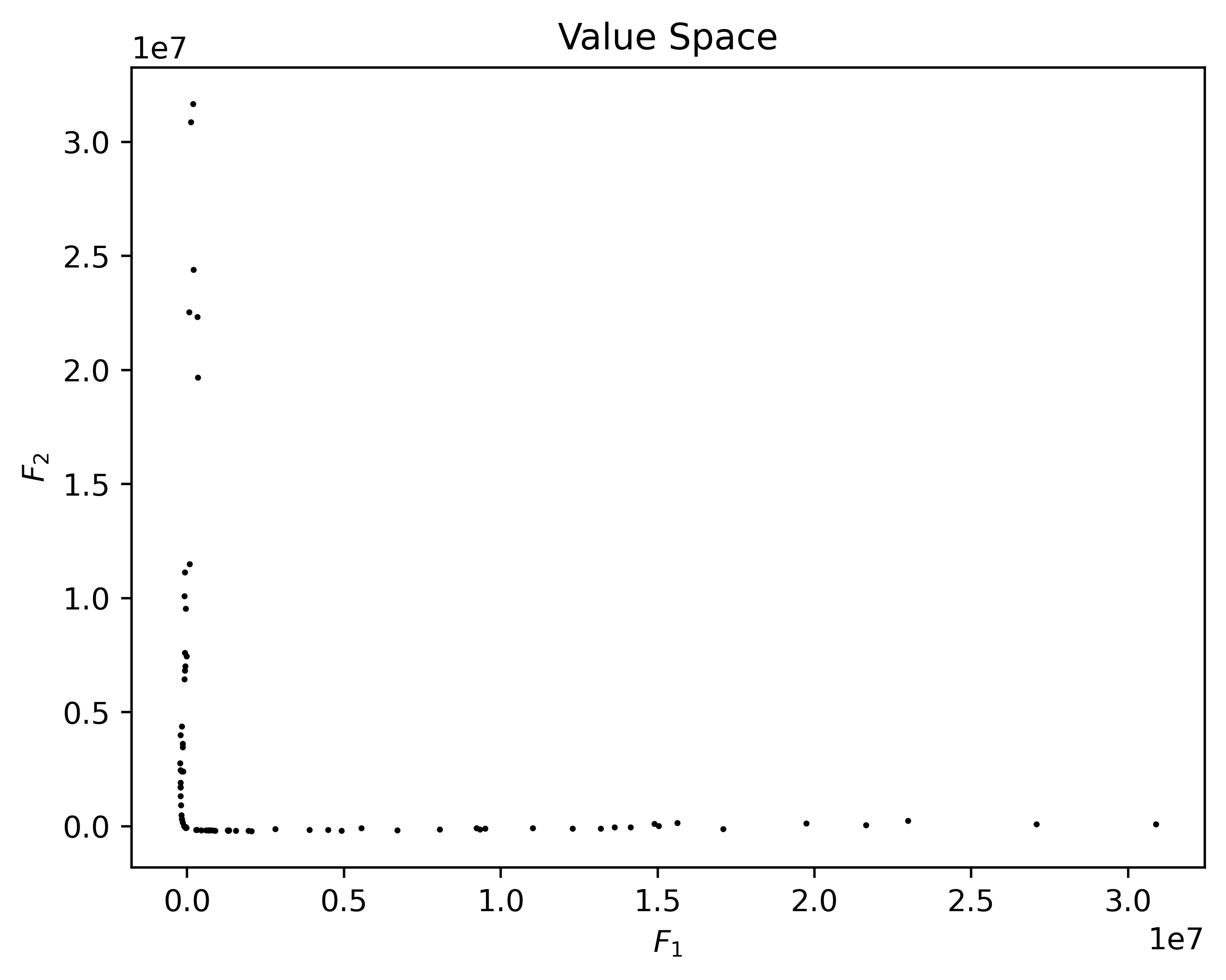} \\
			\includegraphics[scale=0.22]{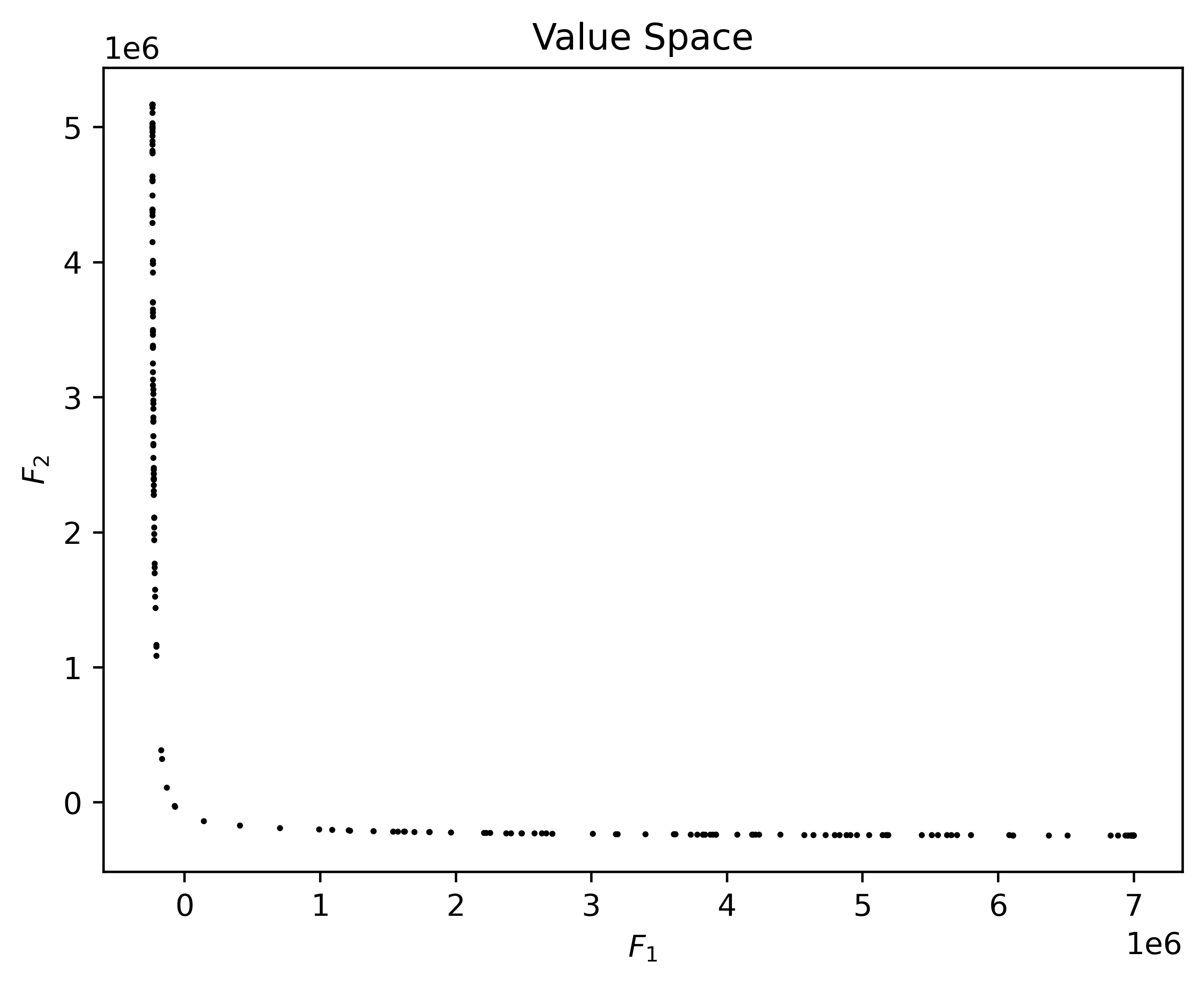}
		\end{minipage}
	}
	\subfigure[QPg]
	{
		\begin{minipage}[H]{.22\linewidth}
			\centering
			\includegraphics[scale=0.22]{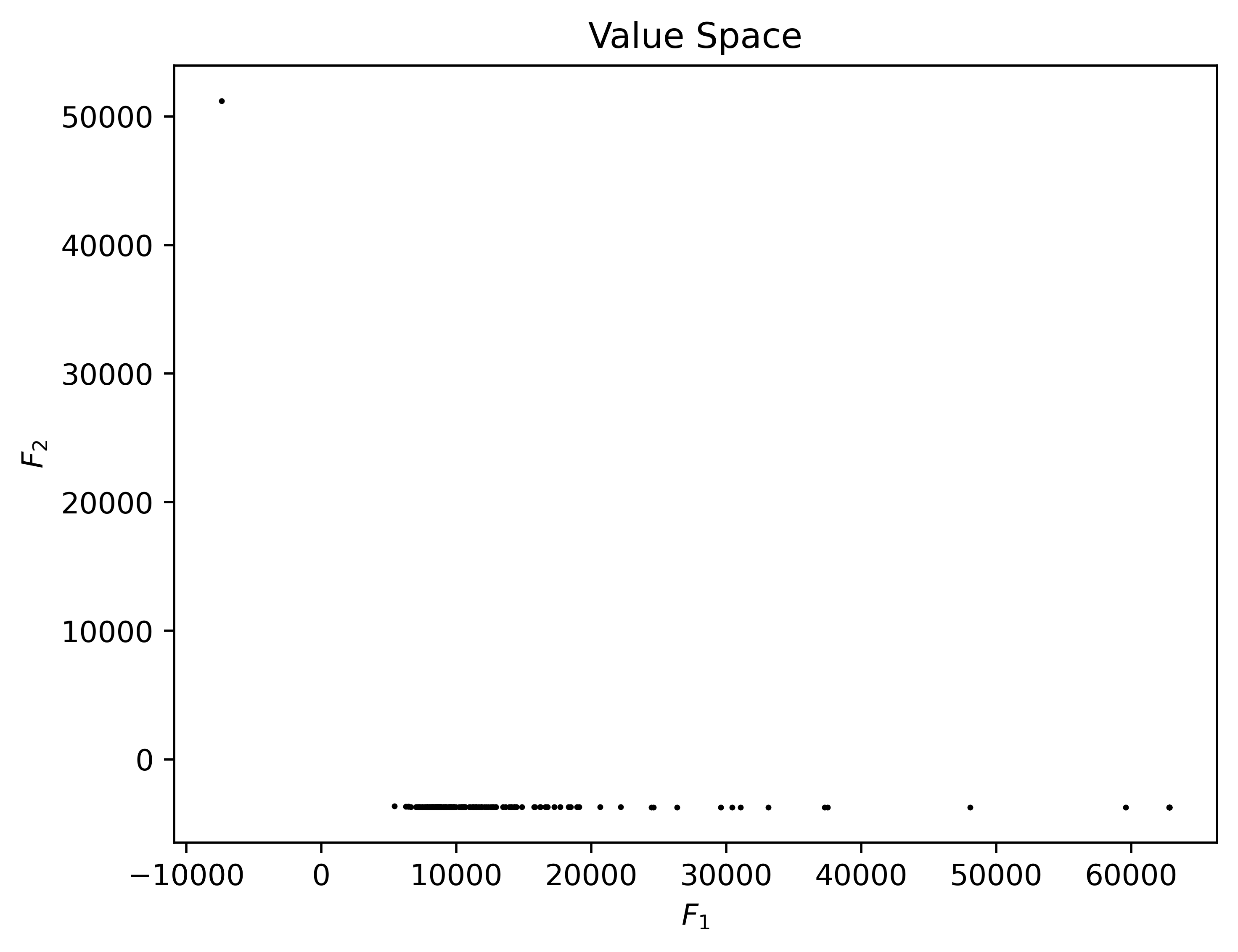} \\
			\includegraphics[scale=0.22]{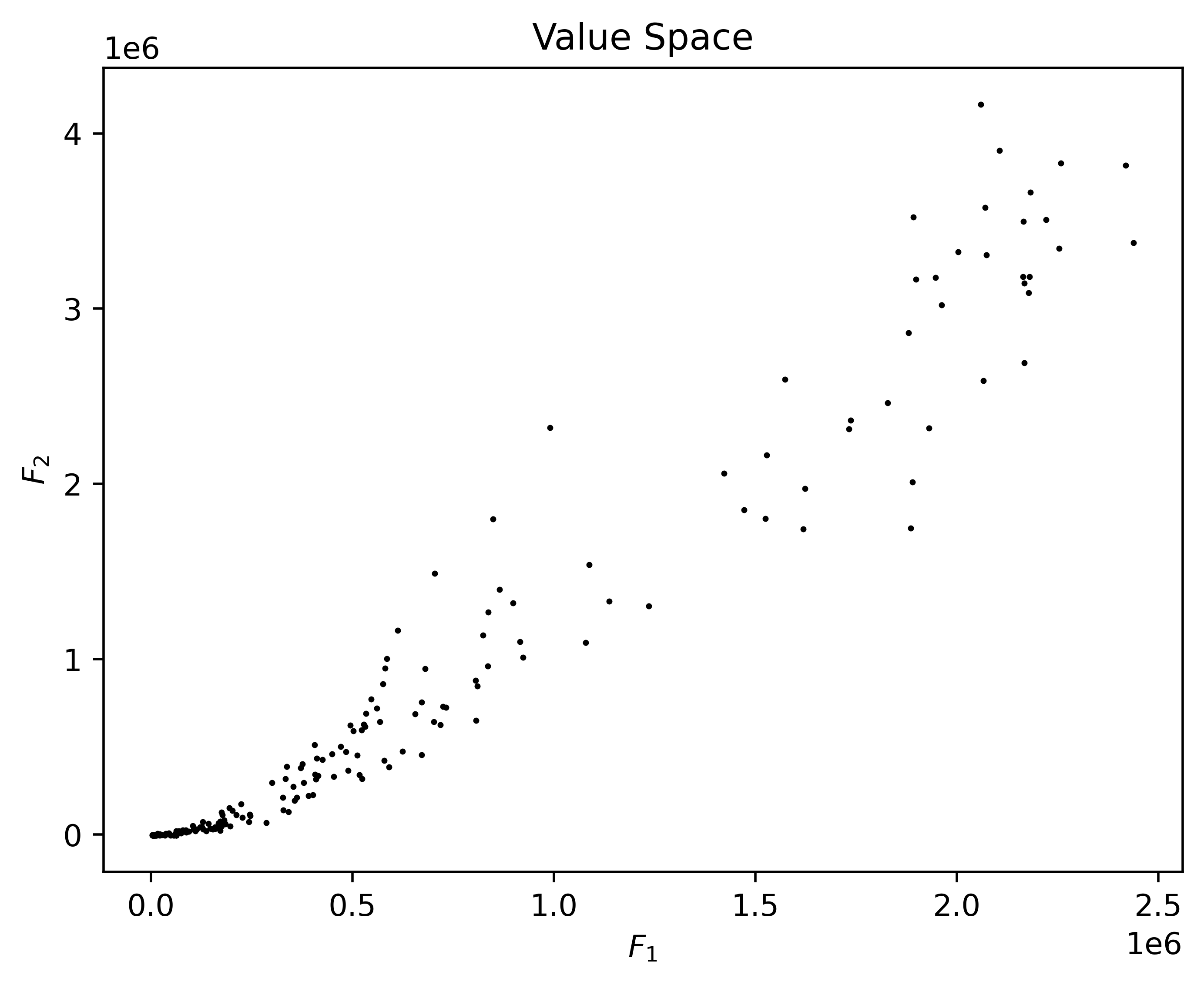} \\
			\includegraphics[scale=0.22]{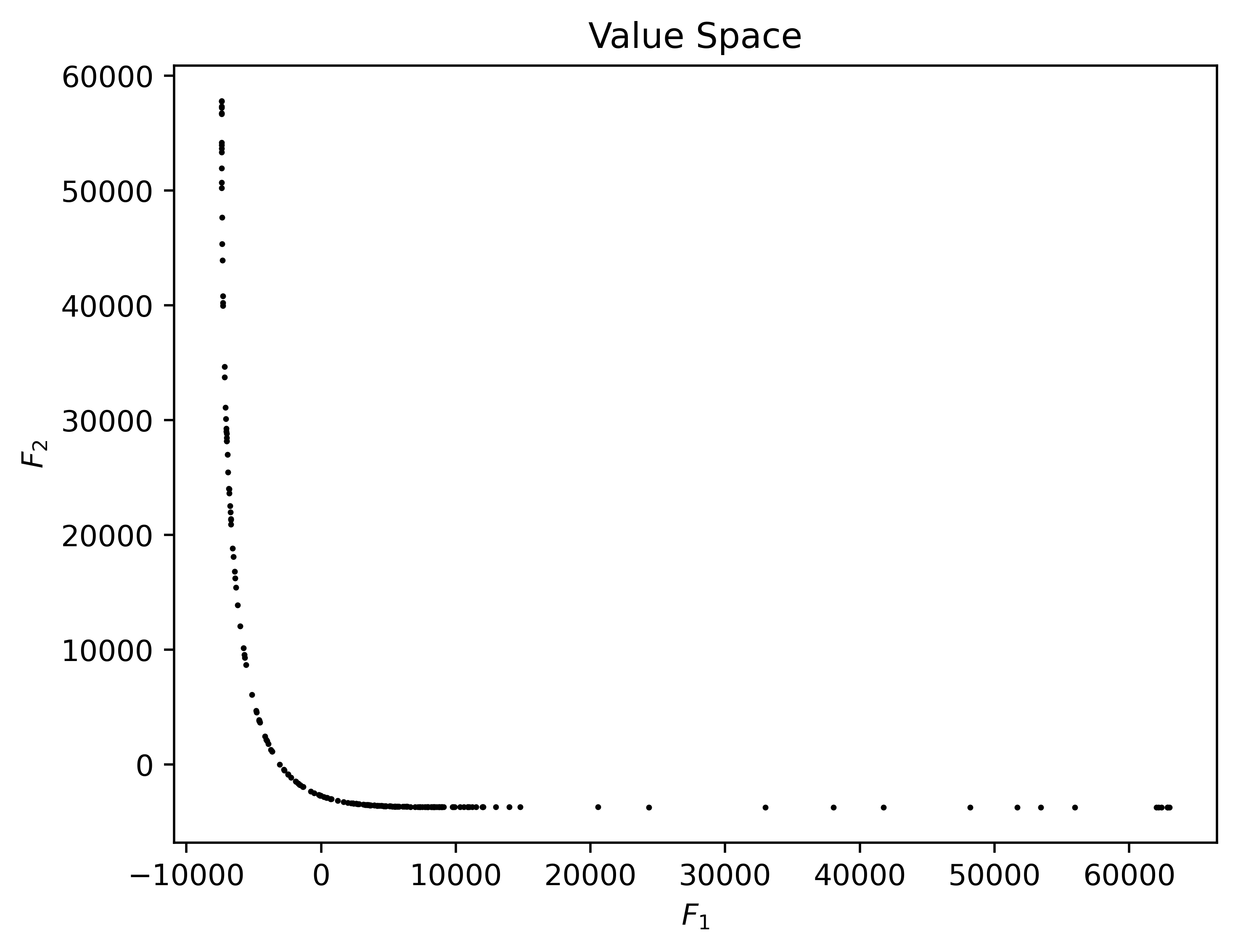}
		\end{minipage}
	}
	\caption{Numerical results in variable space obtained by VMM ({\bf top}), BBDMO ({\bf middle}) and BBDMO\_VM for problems QPd, QPe, QPf, and QPg.}
	\label{f4}
\end{figure}

\begin{table}[h]
	\centering
	\resizebox{.95\columnwidth}{!}{
	\begin{tabular}{lrrrrrrrrrrrrrrr}
		\hline
		Problem &
		\multicolumn{3}{l}{QNMO} &
		&
		\multicolumn{3}{l}{VMMO} &
		\multicolumn{1}{l}{} &
		\multicolumn{3}{l}{BBDMO} &
		\multicolumn{1}{l}{} &
		\multicolumn{3}{l}{BBDMO\_VM} \\ \cline{2-4} \cline{6-8} \cline{10-16} 
		&
		\multicolumn{1}{r}{iter} &
		\multicolumn{1}{r}{feval} &
		\multicolumn{1}{r}{time} &
		\textbf{} &
		\multicolumn{1}{r}{iter} &
		\multicolumn{1}{r}{feval} &
		\multicolumn{1}{r}{time} &
		\multicolumn{1}{r}{} &
		\multicolumn{1}{r}{iter} &
		\multicolumn{1}{r}{feval} &
		\multicolumn{1}{r}{time} &
		\multicolumn{1}{r}{} &
		\multicolumn{1}{r}{iter} &
		\multicolumn{1}{r}{feval} &
		\multicolumn{1}{r}{time} \\ \hline
		QPa & 16.44 & 32.44 & 108.34 &  & 17.29 & 35.89 & \textbf{3.60} &  & 16.97 & 21.66 & 3.80 &  & \textbf{12.80} & \textbf{13.77} & 4.59 \\
		QPb & \textbf{19.27} & {38.52} & 142.33 &  & {27.76} & 82.16 & \textbf{8.03} &  & 61.37 & 102.41 & 13.29 &  & 30.79 & \textbf{33.57} & 11.10 \\
		QPc & 91.95 & 423.02 & 28735.00 &  & 115.24 & 674.43 & 87.14 &  & 75.14 & 124.56 & 16.12 &  & \textbf{47.38} & \textbf{48.56} & \textbf{18.53} \\
		QPd & -- & -- & -- &  & 142.18 & 810.84 & 107.59 &  & 266.58 & 536.73 & 57.64 &  & \textbf{61.20} & \textbf{67.02} & \textbf{23.99} \\
		QPe & -- & -- & -- &  & 459.26 & 4150.27 & 6073.18 &  & 253.19 & 490.93 & \textbf{169.21} &  & \textbf{89.27} & \textbf{90.65} & 793.86 \\
		QPf & -- & -- & -- &  & 473.83 & 4206.95 & 6095.90 &  & 498.66 & 1314.98 & \textbf{467.40} &  & \textbf{166.59} & \textbf{178.25} & {1525.37} \\
		QPg & -- & -- & -- &  & 228.98 & 1471.43 & 185.22 &  & 467.33 & 2940.88 & 168.07 &  & \textbf{217.34} & \textbf{227.83} & \textbf{87.39} \\ \hline
	\end{tabular}
}
\caption{Number of average iterations (iter), number of average function evaluations (feval), and average CPU time (time($ms$)) of QNMO, VMMO, BBDMO, and BBDMO\_VM implemented on quadratic problems.}\label{tab4}
\end{table}
Table \ref{tab4} presents the number of average iterations (iter), number of average function evaluations (feval), and average CPU time (time($ms$)) over 200 experimental runs for every quadratic problem. All the tested methods exhibit convergence for well-conditioned and low-dimensional problems (QPa-b), except that QNMO requires significantly more CPU time due to the expensive per-step cost. The CPU time required by QNMO increases substantially for problem QPc, making it impractical for high-dimensional problems. For ill-conditional and high-dimensional problems (QPd-f), QNMO fails to converge, while BBDMO\_VM significantly outperforms VMMO and BBDMO. It is worth noting that VMMO and BBDMO\_VM are second-order methods which have the potential to capture the local curvature for ill-conditioned problems. However, VMMO can not handle the imbalances among the objectives, resulting in biased solutions (see Fig. \ref{f4}). On the other hand, BBDMO is a first-order method that can cope with ill-conditioned problems due to Barzilai-Borwein's rule but fails to converge for extremely large-scale and high-dimensional problems (QPf). In summary, the primary experiment results confirm that for ill-conditional and high-dimensional MOPs, the proposed BBDMO\_VM can better balance the curvature exploration and per-step cost than QNMO, VMMO and BBDMO.
\section{Conclusions}\label{sec8}
In this paper, we proposed a Barzilai-Borwein descent method with variable trade-off metrics that enjoys cheap per-step cost and is not sensitive to imbalances and conditioning. Theoretical analysis indicates that this method can effectively mitigate imbalances among objectives and achieve rapid linear convergence with appropriate metric selection. Our numerical results validate the superiority of the proposed method, incorporating the trade-off of quasi-Newton approximation. It significantly outperforms QNMO, VMMO, and BBDMO, particularly in the case of large-scale and ill-conditioned problems.
\par From a methodological perspective, it may be worth considering the following points:
\begin{itemize}
	\item Given that the variable metric in BBDMO\_VM is determined by the trade-off (approximation of) Hessian matrices, a natural question arises: Does BBDMO\_VM exhibit superlinear convergence? From a computational perspective, the limited memory BFGS formulation is also recommended for high-dimensional problems.
	\item In this work, we have focused on metric selection from the perspective of approximating Newton's method, which has shown promising performance in quadratic problems. Besides, exploring alternative preconditioning methods could be an intriguing avenue for future research.
	\item Chen et al. recently \cite{CTY2023c} established superlinear convergence of the Newton-type proximal method for MOPs. Consequently, it is meaningful to extend BBDMO\_VM for solving ill-conditioned multiobjective composite problems. Given the potential for expensive proximal operators with non-diagonal matrices, exploring approaches that capture the local geometry using diagonal matrices, such as diagonal Barzilai-Borwein stepsize \cite{PDB2020} is practical. 
\end{itemize}

\bibliographystyle{abbrv}
\bibliography{references}

\begin{acknowledgements}
This work was funded by the Major Program of the National Natural Science Foundation of China [grant numbers 11991020, 11991024]; the National Natural Science Foundation of China [grant numbers 11971084, 12171060]; NSFC-RGC (Hong Kong) Joint Research Program [grant number 12261160365]; the Team Project of Innovation Leading Talent in Chongqing [grant number CQYC20210309536]; the Natural Science Foundation of Chongqing [grant number ncamc2022-msxm01]; and Foundation of Chongqing Normal University [grant numbers 22XLB005, 22XLB006].
\end{acknowledgements}

\end{document}